\newcommand{\out}[1]{\ensuremath{\operatorname{\text{Out}}\left({#1}\right)}}
\newcommand{\inn}[1]{\ensuremath{\operatorname{\text{Inn}}\left({#1}\right)}}
\renewcommand{\p@enumii}{}
\def\@enum@{\list{\csname label\@enumctr\endcsname}%
{\usecounter{\@enumctr}\def\makelabel##1{
\normalfont\ignorespaces\emph{{##1}~}}
\setlength{\labelsep}{3pt}
\setlength{\parsep}{0pt}
\setlength{\itemsep}{0pt}
\setlength{\leftmargin}{0pt}
\setlength{\labelwidth}{0pt}
\setlength{\listparindent}{\parindent}
\setlength{\itemsep}{0pt}
\setlength{\itemindent}{0pt}
\topsep=3pt plus 1pt minus 1 pt}}
\renewcommand{\epsilon}{\ensuremath{\varepsilon}}
\renewcommand{\phi}{\ensuremath{\varphi}}
\renewcommand{\to}{\ensuremath{\longrightarrow}}
\newcommand{\R}{\ensuremath{\mathbb R}}
\newcommand{\Z}{\ensuremath{\mathbb Z}}
\newcommand{\Q}{\ensuremath{\mathbb Q}}
\renewcommand{\ker}[1]{\ensuremath{\operatorname{\text{Ker}}\left({#1}\right)}}
\newcommand{\aut}[1]{\ensuremath{\operatorname{\text{Aut}}\left({#1}\right)}}
\newcommand{\aff}[1]{\ensuremath{\operatorname{\text{Aff}}\left({#1}\right)}}
\def\@map#1#2[#3]{\mbox{$#1 \colon\thinspace #2 \to #3$}}
\def\map#1#2{\@ifnextchar [{\@map{#1}{#2}}{\@map{#1}{#2}[#2]}}
\newcommand{\ang}[1]{\ensuremath{\left\langle #1\right\rangle}}
\newcommand{\setang}[2]{\ensuremath{\ang{#1 \,\mid\, #2}}}
\newtheoremstyle{theoremm}{}{}{\itshape}{}{\scshape}{.}{ }{}
\theoremstyle{theoremm}
\newtheorem{thm}{Theorem}
\newtheorem{lem}[thm]{Lemma}
\newtheorem{prop}[thm]{Proposition}
\newtheorem{cor}[thm]{Corollary}
\newtheoremstyle{remark}{}{}{}{}{\scshape}{.}{ }{}
\theoremstyle{remark}
\newtheorem{defn}[thm]{Definition}
\newtheorem{rem}[thm]{Remark}
\newtheoremstyle{comment}{}{}{\bfseries}{}{\bfseries}{:}{ }{}
\theoremstyle{comment}
\newcommand{\redef}[1]{Definition~\protect\ref{def:#1}}
\newcommand{\rethm}[1]{Theorem~\protect\ref{thm:#1}}
\newcommand{\relem}[1]{Lemma~\protect\ref{lem:#1}}
\newcommand{\reprop}[1]{Proposition~\protect\ref{prop:#1}}
\newcommand{\resec}[1]{Section~\protect\ref{sec:#1}}
\newcommand{\resubsec}[1]{Subsection~\protect\ref{subsec:#1}}
\newcommand{\rerem}[1]{Remark~\protect\ref{rem:#1}}
\newcommand{\req}[1]{equation~(\protect\ref{eq:#1})}
\newcommand{\reqref}[1]{(\protect\ref{eq:#1})}
\lstdefinelanguage{GAP}{%
  morekeywords={%
    Assert,Info,IsBound,QUIT,%
    TryNextMethod,Unbind,and,break,%
    continue,do,elif,%
    else,end,false,fi,for,%
    function,if,in,local,%
    mod,not,od,or,%
    quit,rec,repeat,return,%
    then,true,until,while%
  },%
  sensitive,%
  morecomment=[l]\#,%
  morestring=[b]",%
  morestring=[b]',%
}[keywords,comments,strings]
\begin{document}

\title{Characteristic subgroups and the R$_\infty$-property for virtual braid groups}

\author{KAREL~DEKIMPE\\
KU Leuven Campus Kulak Kortrijk,\\
Etienne Sabbelaan 53, 8500 Kortrijk, Belgium.\\
e-mail:~\texttt{karel.dekimpe@kuleuven.be}\vspace*{4mm}\\
DACIBERG~LIMA~GON\c{C}ALVES\\
Departamento de Matem\'atica - IME-USP,\\
Rua~do~Mat\~ao~1010~CEP:~05508-090  - S\~ao Paulo - SP - Brazil.\\
e-mail:~\texttt{dlgoncal@ime.usp.br}\vspace*{4mm}\\
OSCAR~OCAMPO~\\
Universidade Federal da Bahia,\\
Departamento de Matem\'atica - IME,\\
Av. Adhemar de Barros~S/N~CEP:~40170-110 - Salvador - BA - Brazil.\\
e-mail:~\texttt{oscaro@ufba.br}
}

\date{\today}

\maketitle

\begin{abstract}
Let $n\geq 2$. Let $VB_n$ (resp.\ $VP_n$) denote the virtual braid group (resp.\ virtual pure braid group), let $WB_n$ (resp.\ $WP_n$) denote the welded braid group (resp.\ welded pure braid group) and let $UVB_n$ (resp.\ $UVP_n$) denote the unrestricted virtual braid group (resp.\ unrestricted virtual pure braid group). 
In the first part of this paper we prove that, for $n\geq 4$, the group $VP_n$ and for $n\geq 3$ the groups $WP_n$ and $UVP_n$ are characteristic subgroups of $VB_n$, $WB_n$ and $UVB_n$, respectively. In the second part of the paper we show that, for $n\geq 2$, the virtual braid group $VB_n$, the unrestricted virtual pure braid group $UVP_n$, and the unrestricted virtual braid group $UVB_n$ have the R$_\infty$-property. As a consequence of the technique used for few strings we also prove that, for $n=2,3,4$, the welded braid group $WB_n$ has the R$_\infty$-property and that 
for $n=2$ the  corresponding pure braid groups  have the R$_\infty$-property. On the other hand for $n\geq 3$  it is   
 unknown if  the R$_\infty$-property holds or not for  the virtual pure braid group $VP_n$ and  the welded pure braid group $WP_n$.

 \end{abstract}

	\let\thefootnote\relax\footnotetext{2020 \emph{Mathematics Subject Classification}. Primary: 20E36; Secondary: 20F36, 20E45, 20C30.
		
		\emph{Key Words and Phrases}. Braid group, Virtual braid group, R$_\infty$-property}

\section{Introduction}

In this paper we are interested in characteristic subgroups and  the  R$_\infty$-property of the  virtual braid groups.

The virtual braid group $VB_n$ is the natural companion to the category of virtual knots, just as the Artin braid group is to usual knots and links. 
We note that a virtual knot diagram is like a classical knot diagram with one extra type of crossing, called a virtual crossing. The virtual braid groups have interpretations in terms of diagrams, see~\cite{Kam2},~\cite{Kau} and~\cite{V}.
The notion of virtual knots and links was introduced by Kauffman together with virtual braids in \cite{Kau}, and since then it has drawn the attention of several researchers.
Virtual braid groups have interesting quotients, among them we are interested here in the welded braid group $WB_n$ and the unrestricted virtual braid group $UVB_n$. 
As in the classical case for Artin braid groups, notable subgroups of $VB_n$, $WB_n$ and $UVB_n$ are the respective pure subgroups $VP_n$, $WP_n$ and $UVP_n$. For $VB_n$ there is also another notable subgroup that is an Artin group and will be denoted by $KB_n$. 
 For the definitions of all these groups see \resubsec{vbn}.

Let $K$ be a subgroup of a group $G$. Recall that $K$ is said to be a characteristic subgroup of $G$ if $\phi(K)=K$ for every automorphism $\phi$ of $G$. The existence of characteristic subgroups of a given group is in many cases useful. E.g.\ in this paper we will use characteristic subgroups  to 
study the R$_{\infty}$-property  (see the definition below) 
for the virtual braid groups as well for some quotients    of it.

Let $G, H$ be two groups. For every $h\in H$ we have the inner automorphism $c_h\colon H\to H$, defined by $c_h(x)=hxh^{-1}$.  We say that two homomorphisms $\psi_1,\, \psi_2\colon G\to H$ are \emph{conjugate}, and we denote it by $\psi_1\sim_{c} \psi_2$, if there exists an element $h\in H$ such that $\psi_2 = c_h\circ \psi_1$, which means that $\psi_2(g) = h \psi_1(g) h^{-1}$, for every $g\in G$. 
We note that $\sim_{c}$ is an equivalence relation. 
Our first result is the following theorem which gives a condition on the kernel of a homomorphism to be a characteristic subgroup.

\begin{thm}\label{thm:mainchar}
Let $G$ and $Q$ be two groups. Let $\Sigma$ be the set of all surjective homomorphisms from $G$ onto $Q$, let ${\cal T}=\Sigma/\!\sim_{c}$ be the set of equivalence classes of $\Sigma$ by $\sim_{c}$ and let $\Lambda$ be a set of representatives of ${\cal T}$.  
Let $\lambda\in \Lambda$ be such that for all $\omega\in \Lambda$ such that $Ker(\omega)$ is isomorphic to $ Ker(\lambda)$ it actually holds that $Ker(\lambda) = Ker(\omega)$.\\
Then, $Ker(\lambda)$ is a characteristic subgroup of $G$.
\end{thm}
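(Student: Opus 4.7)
The plan is to take an arbitrary automorphism $\phi$ of $G$ and show that $\phi(Ker(\lambda)) = Ker(\lambda)$. The natural way to bring the hypothesis into play is to precompose $\lambda$ with $\phi^{-1}$ (or postcompose with $\phi$, whichever is cleaner) to obtain a new surjection from $G$ onto $Q$, and then track what happens to its kernel once we replace it by the distinguished representative $\omega \in \Lambda$ of its $\sim_c$-class.

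More concretely, I would first observe that $\lambda \circ \phi^{-1}\colon G \to Q$ is again a surjective homomorphism, so $\lambda \circ \phi^{-1} \in \Sigma$. Let $\omega \in \Lambda$ be the unique representative with $\lambda \circ \phi^{-1} \sim_c \omega$, so there is some $h \in Q$ with $\lambda \circ \phi^{-1} = c_h \circ \omega$. Since inner automorphisms of $Q$ are bijections, $Ker(c_h \circ \omega) = Ker(\omega)$, and since $\phi$ is an automorphism of $G$, $Ker(\lambda \circ \phi^{-1}) = \phi(Ker(\lambda))$. Hence
\[
Ker(\omega) = \phi(Ker(\lambda)).
\]

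This identification automatically yields an isomorphism $Ker(\omega) \cong Ker(\lambda)$ induced by $\phi$. At this point the hypothesis on $\lambda$ applies: any $\omega \in \Lambda$ whose kernel is merely \emph{isomorphic} to $Ker(\lambda)$ must in fact have kernel \emph{equal} to $Ker(\lambda)$. Therefore $\phi(Ker(\lambda)) = Ker(\omega) = Ker(\lambda)$, and since $\phi$ was an arbitrary automorphism of $G$, this shows that $Ker(\lambda)$ is characteristic.

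I do not anticipate a serious obstacle: the proof is essentially a bookkeeping argument verifying that the rigidity condition placed on $\lambda$ was custom-tailored to rule out exactly the obstruction to being characteristic. The only subtlety worth being careful about is not confusing the kernel of $\lambda \circ \phi^{-1}$ with that of $\phi \circ \lambda$ (the latter does not even make sense when the codomain is $Q \neq G$), and making sure that the conjugation equivalence is taken in $Q$ rather than in $G$ so that kernels are preserved.
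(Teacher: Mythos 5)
Your proposal is correct and follows essentially the same argument as the paper: the paper composes $\lambda$ with $\phi$ rather than $\phi^{-1}$, obtaining $\phi(Ker(\zeta))=Ker(\lambda)$ for the representative $\zeta$ of the class of $\lambda\circ\phi$, but this is the same bookkeeping in the opposite direction. All the key steps (conjugation in $Q$ preserves kernels, $Ker(\lambda\circ\phi^{-1})=\phi(Ker(\lambda))$, then invoking the rigidity hypothesis) match.
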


Our main result about characteristic subgroups of virtual braid groups and some of its quotients is the following result.

\begin{thm}\label{thm:maincharvbn}
Let $n\geq 2$.
\begin{enumerate}

	\item The virtual pure braid group $VP_n$ is a characteristic subgroup of $VB_n$ if and only if $n\geq 4$ and the group $KB_n$ is a  characteristic subgroups of the virtual braid group $VB_n$ if and only if $n\geq 3$.

	\item The welded pure braid group $WP_n$ is a characteristic subgroup of the welded braid group $WB_n$ if and only if $n\geq 3$.

	\item The unrestricted virtual pure braid group $UVP_n$ is a characteristic subgroup of the unrestricted virtual braid group $UVB_n$ if and only if $n\geq 3$.
\end{enumerate}
\end{thm}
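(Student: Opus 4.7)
The plan is to deduce each ``if'' direction from \rethm{mainchar} with $Q = S_n$, and to prove each ``only if'' direction by exhibiting an explicit non-preserving automorphism. In every case the subgroup in question is the kernel of a canonical surjection onto $S_n$: the groups $VP_n$, $WP_n$, $UVP_n$ are each the kernel of the natural projection sending both $\sigma_i$ and $\rho_i$ to $(i,i{+}1)$, while $KB_n$ is the kernel of the alternative projection sending $\sigma_i \mapsto \id$ and $\rho_i \mapsto (i,i{+}1)$. It therefore suffices, for each ``if'' part, to verify the hypothesis of \rethm{mainchar} for this particular $\lambda$.

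To apply \rethm{mainchar}, I classify surjective homomorphisms from the ambient group onto $S_n$ up to conjugation in $S_n$. The images of the involutions $\rho_i$ must be involutions satisfying the Coxeter-type relations; for $n$ in the stated range these are forced (up to conjugation in $S_n$) either to vanish or to form the set of adjacent transpositions. The images of the $\sigma_i$ are then determined by their braid and commutation relations with the $\rho_j$, together with the extra relations defining the welded and unrestricted quotients, to lie in a short explicit list. Comparing the resulting kernels via isomorphism invariants such as abelianization, torsion, and known presentation data for Artin-type groups, one shows that in the stated range of $n$ the target kernel is distinguished (up to equality in the list of representatives) from every other candidate sharing its isomorphism class, so \rethm{mainchar} yields characteristicity.

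For the ``only if'' direction I construct non-preserving automorphisms in each excluded case. When $n = 2$, the groups $VB_2$, $WB_2$, $UVB_2$ all coincide with the free product $\Z \ast \Z/2 \cong \langle \sigma_1, \rho_1 \mid \rho_1^2 \rangle$, and the Nielsen-type automorphism $\phi$ defined by $\phi(\sigma_1) = \sigma_1 \rho_1$ and $\phi(\rho_1) = \rho_1$ sends $\sigma_1 \rho_1 \in VP_2 = WP_2 = UVP_2$ to $\sigma_1 \notin VP_2$, and similarly moves $KB_2$ off itself. For $VP_3$ inside $VB_3$, an analogous construction --- exploiting the existence of a second conjugacy class of surjections $VB_3 \to S_3$ whose kernel is isomorphic to $VP_3$, which is possible due to the small size of $S_3$ and disappears for $n \geq 4$ --- produces an automorphism not preserving $VP_3$.

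The main obstacle is the classification of surjections onto $S_n$ in the low-$n$ regime together with the identification of the resulting kernels up to abstract isomorphism. For $n = 3$ in part (a), where $S_3$ has only three involutions, one must carefully enumerate all conjugacy classes of surjections $VB_3 \to S_3$ and match their kernels by structural invariants; only then can we verify that $n \geq 4$ (respectively $n \geq 3$ in parts (b) and (c)) is the precise threshold at which the uniqueness hypothesis of \rethm{mainchar} kicks in. Verifying that the extra welded/unrestricted relations really do cut down the pool of eligible surjections at $n = 3$ --- while the pure $VB_n$ setting still admits an exceptional coincidence of kernels --- is the delicate core of the argument.
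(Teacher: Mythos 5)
Your overall strategy coincides with the paper's: the ``if'' directions are obtained from \rethm{mainchar} by classifying surjections onto $S_n$ up to conjugation (reducing via the Coxeter subgroup generated by the virtual generators, then using the mixed relations to pin down the images of the $\sigma_i$) and separating the candidate kernels by abelianization; the $n=2$ failure is handled by a Nielsen-type automorphism of $\Z\ast\Z_2$, which is exactly the paper's \rerem{notchar}. Your automorphism $\sigma_1\mapsto\sigma_1 v_1$, $v_1\mapsto v_1$ does the job there (note the paper writes $v_i$, not $\rho_i$, for the virtual generators of $VB_n$).

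There is, however, a genuine gap in your ``only if'' argument for $VP_3\le VB_3$. You write that one ``exploits the existence of a second conjugacy class of surjections $VB_3\to S_3$ whose kernel is isomorphic to $VP_3$'' to produce a non-preserving automorphism. But \rethm{mainchar} is only a sufficient condition for characteristicity: the failure of its uniqueness hypothesis (two non-conjugate surjections with isomorphic kernels) does \emph{not} by itself yield an automorphism of $VB_3$ carrying one kernel off the other --- nothing guarantees that an abstract isomorphism between the two kernels extends to, or is induced by, an automorphism of the ambient group. To close this you must actually exhibit the automorphism. The paper does so explicitly: $\alpha(v_1)=v_1$, $\alpha(v_2)=v_2$, $\alpha(\sigma_1)=v_1v_2\sigma_1v_2v_1=\sigma_2$, $\alpha(\sigma_2)=v_1v_2\sigma_2v_2v_1$, checks that $\alpha^3=\id$ (so $\alpha$ is an automorphism), and verifies $\pi_P(\alpha(v_1\sigma_1))\neq 1$ while $v_1\sigma_1\in VP_3$. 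In effect $\pi_P\circ\alpha$ is the surjection $\psi_3$ from the classification, so the ``exceptional coincidence'' you allude to is realized by a concrete automorphism --- but that realization is precisely the step your proposal omits and cannot be waved through.
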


We note that \rethm{maincharvbn} is known for $n\geq 5$, but was not known for the case of few strings, so in this work we complete the knowledge about pure subgroups being characteristic in their respective virtual braid groups.  
\rethm{maincharvbn} (for $n\geq 5$) was proved for $VB_n$ in \cite{BP} using the explicit description of the automorphism group of $VB_n$, see \cite[Corollary~2.7]{BP}; for $WB_n$ (see \cite[Remark~2.17]{M}) and $UVB_n$ (see \cite[Proposition~2.15]{M}) the description of the automorphism group was not used.  
However, we highlight that our approach is different by using the general result in \rethm{mainchar} for any number of strings: instead of using explicitly the automorphism group of each of the groups involved we used, up to conjugation, the set of surjective homomorphisms onto the symmetric group. The description of these sets for small values on $n$ is given in \resec{char}. 
It is worth to notice that \rethm{maincharvbn} will be used in the proof of \rethm{mainresult} below.

\medskip

Consider a group $G$ and an endomorphism $\alpha$ of $G$. We say that two elements $x$ and $y$ of $G$ are twisted conjugate (via $\alpha$) if and only if there exists a $z\in G$ such that $x = z y \alpha(z)^{-1}$. It is easy to see that the relation of being twisted conjugate is an equivalence relation and the number of equivalence classes (also referred to as Reidemeister classes) is called the Reidemeister number $R(\alpha)$ of $\alpha$. This Reidemeister number is either a positive integer or $\infty$.

Reidemeister numbers find their origins in algebraic topology and to be more precise in Nielsen--Reidemeister fixed point theory. 
Here one is interested in the study of the fixed point classes of a selfmap $f$ of a space $X$. 
The number of fixed point classes of $f$ is called the Reidemeister number of $f$ and is denoted by $R(f)$. It is known that 
 $R(f)=R(f_\ast)$, where $f_\ast\colon \pi_1(X) \to \pi_1(X) $ is the induced endomorphism on the fundamental group $\pi_1(X)$ of $X$.

There is currently a growing interest in the study of groups $G$ having the R$_\infty$-property, these are groups for which $R(\alpha)=\infty$ for any automorphism $\alpha \in \aut{G}$. The study of groups with that  property was initiated by Fel'shtyn and Hill \cite{FH}.

Since the beginning of this century many authors have been studying this property  and for several families of groups it is known whether or not they have the R$_\infty$-property. 
 Here are  some families of groups with the R$_\infty$-property:  the non-elementary Gromov hyperbolic groups \cite{F,LL}, most of the Baumslag--Solitar groups \cite{FG1}  and groups quasi--isometric to Baumslag--Solitar groups \cite{TW},\
 generalized Baumslag--Solitar groups \cite{L}, many linear groups \cite{FN,N} and also several families of lamplighter groups \cite{GW,T}.
The study of the R$_\infty$-property for braid groups and braid-type groups has been increasing during the last years. For instance, in \cite{FG} it was shown that the Artin braid groups $B_n$ and the mapping class groups of closed orientable surfaces different from the sphere have property R$_\infty$. 
In \cite{DGO} the case of the pure Artin braid groups $P_n$ was considered, and they also share this property. 
More recently, the R$_\infty$-property was studied for some right angled Artin groups in  \cite{DS} and for some Artin groups in \cite{CS}.

In this paper we study the R$_{\infty}$-property for virtual braid groups and unrestricted virtual braid groups.   
Since these groups are trivial when $n=1$, we shall consider, in general, $n\geq 2$. 
More precisely, the statement below summarises the main results  in this work about this property.

\begin{thm}\label{thm:mainresult}
Let $n\geq 2$. The following groups have the R$_{\infty}$-property:
\begin{enumerate}
\item \label{item:a} The unrestricted virtual pure braid group $UVP_n$,
\item \label{item:b} the unrestricted virtual braid group $UVB_n$, 
\item \label{item:c} the virtual braid group $VB_n$.\\
Further, if $n=2, 3 \ and \   4$ also the  welded  braid group $WB_n$ has the R$_{\infty}$-property,
and for $n=2$ also the virtual ($VP_2$), welded ($WP_2$) and unrestricted ($UVP_2$) pure braid groups  have the R$_{\infty}$-property.
\end{enumerate}
\end{thm}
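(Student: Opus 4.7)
The overall strategy is the standard characteristic-quotient technique: if $N$ is a characteristic subgroup of $G$, then every $\alpha\in\aut{G}$ induces $\bar\alpha\in\aut{G/N}$ and satisfies $R(\alpha)\ge R(\bar\alpha)$. Hence, for each group $G$ in the statement it suffices to exhibit a characteristic quotient $Q=G/N$ on which every induced automorphism has infinite Reidemeister number.

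For the unrestricted virtual pure braid group $UVP_n$ (item~(a)), I would use the Bardakov--Bellingeri presentation, in which all defining relations lie in the commutator subgroup, so that the (characteristic) abelianization is $UVP_n^{ab}\cong\Z^{n(n-1)}$ with canonical basis given by the classes of the natural generators $x_{ij}$, $1\le i\ne j\le n$. The plan is to show that the image of $\aut{UVP_n}\to GL_{n(n-1)}(\Z)$ lies in the signed permutation group of this basis (using an intrinsic characterisation of the conjugacy classes of the $x_{ij}$, e.g.\ as shortest positive elements of their conjugacy classes, together with a Laurence--Servatius-style analysis of the RAAG-like automorphism structure), and that the cycle sign-products of every such induced signed permutation are constrained by the $S_n$-symmetry so that $1$ is always an eigenvalue, i.e.\ $\det(I-\alpha^{ab})=0$, giving $R(\alpha^{ab})=\infty$ and hence $R(\alpha)=\infty$.

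For items~(b) and~(c) with $n\ge 3$, set $N=UVP_n$ in the case $G=UVB_n$ and $N=VP_n$ in the case $G=VB_n$. By \rethm{maincharvbn}, $N$ is characteristic in $G$ (for $n\ge 4$ in the $VB_n$ case; the case $n=3$ is treated by replacing $VP_3$ with the characteristic subgroup $KB_3$, whose abelianization is also a permutation module). Consequently $[N,N]$ is characteristic in $G$, so $Q=G/[N,N]\cong N^{ab}\rtimes S_n$ is a characteristic quotient of $G$. Inside $Q$ the free-abelian summand $N^{ab}$ is again characteristic (being the unique maximal torsion-free abelian normal subgroup of $Q$ since $Z(S_n)=1$ for $n\ge 3$), so any induced automorphism restricts to an automorphism of $N^{ab}$ of the signed-permutation type analysed in item~(a), and the same determinant argument yields $R(\alpha)=\infty$. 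For the remaining small cases ($n=2$ in all groups, $WB_n$ for $n=3,4$, and the pure groups $VP_2$, $WP_2$, $UVP_2$), the groups admit concrete low-rank presentations --- for example $VB_2\cong\Z*\Z/2$ and $UVP_2\cong F_2$ --- allowing $\aut{G}$ to be described explicitly and R$_\infty$ to be read off from standard facts on Reidemeister spectra of free products and free groups of rank $\ge 2$.

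The principal obstacle is item~(a): establishing that $\aut{UVP_n}$ really acts on the canonical basis of $UVP_n^{ab}$ by signed permutations (rather than by arbitrary elements of $GL_{n(n-1)}(\Z)$, since in general transvections in RAAGs can contribute non-trivially to the abelianisation), and then showing that the induced cycle sign-products always include at least one $+1$ so that $\det(I-\alpha^{ab})=0$. The analogous step for $VP_n$ in item~(c) is more delicate, since $VP_n$ is not straightforwardly a RAAG and its abelianization carries a less uniform $S_n$-action; controlling that action is the key additional ingredient needed to transfer the argument of item~(a) across the virtual, welded and unrestricted settings.
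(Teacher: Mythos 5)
Your global strategy (pass to characteristic quotients and apply the standard inheritance lemma for Reidemeister numbers) matches the paper's, but the technical core of your argument does not work, and it also misses the paper's actual mechanism for the hardest case. The decisive gap is in item~(a): $UVP_n$ is a direct product of $n(n-1)/2$ copies of the free group $F_2$, so $\aut{UVP_n}$ contains $\prod \aut{F_2}$ acting factor-wise, and since $\aut{F_2}\to GL_2(\Z)$ is surjective (Nielsen), the image of $\aut{UVP_n}$ in $GL_{n(n-1)}(\Z)$ contains a copy of $\prod GL_2(\Z)$. It is therefore \emph{not} contained in the signed permutation group of the basis $\{x_{ij}\}$, and there are automorphisms $\alpha$ with $\det(I-\alpha^{ab})\neq 0$: lift the matrix $\left(\begin{smallmatrix}2 & 1\\ 1 & 1\end{smallmatrix}\right)$ to an automorphism of each $F_2$ factor, so that $\det(I-\alpha^{ab})=(-1)^{n(n-1)/2}$. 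Hence $R(\alpha^{ab})$ is finite and the R$_\infty$-property of $UVP_n$ cannot be detected on the abelianization at all. The paper avoids this entirely: it quotes the known fact that a direct product of finitely many free groups of rank $\ge 2$ has the R$_\infty$-property (a RAAG result resting ultimately on the hyperbolicity of $F_2$ and on how automorphisms permute the factors), and then deduces item~(b) from item~(a) via the exact sequence $1\to UVP_n\to UVB_n\to S_n\to 1$ using the \emph{finite-quotient} direction of the inheritance lemma --- the opposite direction from the one you propose.

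For item~(c) your reduction to $N^{ab}\rtimes S_n$ and ``the same determinant argument'' is likewise unsubstantiated: even granting that an induced automorphism acts on the lattice by a matrix $D$ compatible with the holonomy, one must still prove that some $\phi(w)D$ has eigenvalue $1$, and that is exactly the hard content. The paper establishes this only for $n=3$ (by showing the normal closure of $v_1v_2$ is characteristic in $VB_3/[KB_3,KB_3]$ and reducing to a wallpaper group) and for $n=4$ (by the character decomposition $\chi=\chi_1+\chi_3+2\chi_4+\chi_5$, which yields a characteristic sublattice $V'$ and a reduction to a $3$-dimensional crystallographic group known to have R$_\infty$). For $n\ge 5$ the paper does something entirely different: it proves that the kernel of the projection $VB_n\to UVB_n$ is characteristic by checking the two generators of $\out{VB_n}$ on the forbidden relators, and then transfers R$_\infty$ \emph{up} from the quotient $UVB_n$. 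Note also that $VP_3$ is not characteristic in $VB_3$, so your fallback to $KB_3$ is necessary there (as in the paper), but the subsequent crystallographic step still has to be carried out; none of these ingredients can be replaced by the abelianization determinant argument you sketch.
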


For $n\geq 5$,  part $(c)$ of the Theorem \ref{thm:mainresult} above was simultaneously  obtained by N.~Nanda (\cite{Na}), using a different approach.

\medskip
 
This paper is organized as follows. In the first subsection of \resec{prelim} we will give the main definitions about the virtual braid groups that will be used in the text and in the second subsection we prove \rethm{mainchar}. 
In \resec{char} we prove \rethm{maincharvbn}. To do that,  we first treat the case $n=2$ and thereafter we describe the set of homomorphisms, up to conjugation, from $VB_n$ (and also from $WB_n$ and $UVB_n$) to $S_n$, for $n=3,4$.  
Then, in \resubsec{thm2} we use this information and \rethm{mainchar} to prove \rethm{maincharvbn}. 
We prove \rethm{mainresult} in \resec{vbn}, its proof is given in several different steps and using different techniques. 
For $n\geq 2$, the group $UVP_n$ is isomorphic to a direct product of free groups and from \rethm{maincharvbn} it is a characteristic subgroup of $UVB_n$. Using this, in Proposition~\ref{prop:uvbn}, we prove item~(\ref{item:a}) and item~(\ref{item:b}) of \rethm{mainresult}.
To prove \rethm{mainresult}~(\ref{item:c}), for $n\geq~5$, we show that the kernel of the natural projection of $VB_n$ onto $UVB_n$ is a characteristic subgroup and then we use item~(\ref{item:b}) of \rethm{mainresult}, this will be done in Theorem \ref{main}.
For $n=3$ or $n=4$, we first show that the 
quotient $VB_n/[VP_n,\, VP_n]$ has the R$_{\infty}$-property, where 
$[VP_n,\, VP_n]$ is the commutator subgroup of the virtual pure braid group $VP_n$, see Theorems~ 
\ref{thm:vb3cryst} and~\ref{thm:crystgroup}, respectively. 
Then   the desired result for \rethm{mainresult}~(\ref{item:c}) for the cases $n=3$ and $n=4$ is obtained  in Corollaries~\ref{cor:vb3} and~\ref{cor:excep}, respectively.
 
\medskip

We end this paper with an appendix explaining how the techniques of this paper can also be used to treat other braid-like groups such as the virtual twin groups.

\subsection*{Acknowledgments}

The first author was supported by Methusalem grant METH/21/03 -- long term structural funding of the Flemish Government.
The second author was partially supported by the National Council for Scientific and Technological Development - CNPq through a \textit{Bolsa de Produtividade} 305223/2022-4  and by   Projeto Tem\'atico-FAPESP Topologia Alg\'ebrica, Geom\'etrica e Diferencial 2016/24707-4 (Brazil).
The third author was partially supported by the National Council for Scientific and Technological Development - CNPq through a \textit{Bolsa de Produtividade} 305422/2022-7 and by Capes/Programa Capes-Print/ Processo n\'umero 88887.835402/2023-00.

\section{Preliminaries}\label{sec:prelim}

In this section we give the definitions of virtual braid groups that we use in the text and we 
prove \rethm{mainchar}.

\subsection{Virtual braid groups and forbidden relations}\label{subsec:vbn}

In this subsection, we recall the basic definitions of virtual braid groups.  
First, we write a presentation of the virtual braid group $VB_n$ that will be very useful in this work. 
This presentation appears in \cite{BB} and it is a reformulation of the one given in \cite[p.798]{V}.

\begin{defn}[{\cite[Theorem~4]{BB}}]\label{apvbn}
Let $n\geq 2$. The \emph{virtual braid group on $n$ strings}, denoted by $VB_n$, is the abstract group generated by $\sigma_i$ (classical generators) and $v_i$ (virtual generators), for $i=1,2,\dots,n-1$, with relations: \\
\hspace*{15mm}\parbox{10cm}{
\begin{itemize} 
       \item[(AR1)] $\sigma_i\sigma_{i+1}\sigma_{i}=\sigma_{i+1}\sigma_{i}\sigma_{i+1}$, $i=1,2,\dots,n-2;$
       \item[(AR2)] $\sigma_{i}\sigma_j=\sigma_{j}\sigma_i$, $\mid i-j\mid\ge 2$;
       \item[(PR1)] $v_iv_{i+1}v_{i}=v_{i+1}v_{i}v_{i+1}$, $i=1,2,\dots,n-2;$
       \item[(PR2)]  $v_{i}v_j=v_{j}v_i$, $\mid i-j\mid\ge 2;$
			 \item[(PR3)] $v_i^2=1$, $i=1,2,\dots,n-1;$
			 \item[(MR1)]  $\sigma_{i}v_j=v_{j}\sigma_i$, $ \mid i-j\mid\ge 2;$
			 \item[(MR2)] $v_iv_{i+1}\sigma_{i}=\sigma_{i+1}v_{i}v_{i+1}$, $i=1,2,\dots,n-2.$
\end{itemize}}
\end{defn}

\begin{rem}
    The letters AR, PR and MR that appear in Definition~\ref{apvbn} are used to indicate the type of relations in the given presentation of $VB_n$: \textit{Artin Relations, Permutation Relations and Mixed Relations}. 
\end{rem}

Let $n\geq 2$. As in \cite[Section~2]{BP}, from the presentation of $VB_n$ one can see that there are surjective homomorphisms $\pi_P\colon VB_n\to S_n$ and $\pi_K\colon VB_n\to S_n$ defined by $\pi_P(\sigma_i)=\pi_P(v_i)=\tau_i=(i,\, i+1)$ for all $1\leq i\leq n-1$ and by $\pi_K(\sigma_i)=1$ and $\pi_K(v_i)=\tau_i= (i,\, i+1)$ for all $1\leq i\leq n-1$, respectively. 
The kernel of $\pi_P$ is called the \textit{virtual pure braid group} and it is denoted by $VP_n$. 
A presentation of this group can be found in \cite[Theorem~1]{B}. 
The kernel of $\pi_K$, denoted by $KB_n$, is known to be an Artin group (this follows from the presentation of $KB_n$ given in \cite[Proposition~17]{BB}).  This fact has been useful to obtain properties of the virtual braid group itself, for instance it was essential in determining the set of endomorphisms of $VB_n$ in \cite{BP}. 
As mentioned in the first paragraph of Section~3 of \cite{B} (resp.\  in \cite[Section~6]{BB}) the virtual braid group admits a decomposition as semi-direct product $VB_n=VP_n\rtimes S_n$ (resp.\ $VB_n=KB_n\rtimes S_n$), with $\iota\colon S_n\longrightarrow VB_n$ given by $\iota(\tau_i)=v_i$, for $i=1,\dots,n-1$, being a section for $\pi_P$ (resp. for $\pi_K$). 

\begin{defn}\label{def:uvbn}
Consider, for $i=1,\dots,n-2$, the following so-called forbidden relations in the virtual braid group:
\begin{enumerate}
	\item \label{forb} $v_i\sigma_{i+1}\sigma_i = \sigma_{i+1}\sigma_iv_{i+1}$, 
	
	\item \label{forb2}  $v_{i+1}\sigma_i \sigma_{i+1} = \sigma_i\sigma_{i+1}v_i$.
\end{enumerate}
The \emph{welded braid group}, denoted by $WB_n$, is the quotient of $VB_n$ by the normal closure of the relations \textit{(\ref{forb})}.
The \emph{unrestricted virtual braid group}, denoted by $UVB_n$, is the quotient of $VB_n$ by the normal closure of the relations \textit{(\ref{forb})} and \textit{(\ref{forb2})}.
\end{defn}

\begin{rem}

We note that the welded braid group $WB_n$ appears with other names in the literature, for example as \emph{the loop braid group}, see \cite{D}. Also in \cite{D} one can find an extensive exposition of it. 
\end{rem}

Since the forbidden relations are preserved by $\pi_P\colon VB_n\to S_n$, we may define the homomorphisms $\overline{\pi_P}\colon WB_n\to S_n$
and $\overline{\overline{\pi_P}}\colon UVB_n\to S_n$ by $\overline{\pi_P}(\sigma_i)=\overline{\pi_P}(v_i)=(i,\, i+1)$ for all $1\leq i\leq n-1$ and $\overline{\overline{\pi_P}}(\sigma_i)=\overline{\overline{\pi_P}}(v_i)=(i,\, i+1)$ for all $1\leq i\leq n-1$, respectively. 
The kernel of $\overline{\pi_P}$ is called the \textit{welded pure braid group} and it is denoted by $WP_n$.
The kernel of $\overline{\overline{\pi_P}}$ is called the \textit{unrestricted virtual pure braid group} and it is denoted by $UVP_n$.

We note that, for $n\geq 3$, is not possible to define a similar homomorphism $\pi_K\colon VB_n\to S_n$ for the groups $WB_n$ and $UVB_n$ since the forbidden relations are not preserved by $\pi_K$.

\subsection{On characteristic subgroups}\label{subsec:char}

In this subsection we prove a general result about characteristic subgroups of a group. 
Recall that two homomorphisms of groups $\psi_1,\, \psi_2\colon G\to H$ are conjugate, denoted by $\psi_1\sim_{c} \psi_2$, if there exists $\eta\in \inn{H}$ such that $\psi_2=\eta\circ \psi_1$. The following lemma is an easy observation.

\begin{lem}
\label{lem:eqrel}
Let $\Sigma$ be the set of all homomorphisms from $G$ to $H$. 
Then $\sim_{c}$ is an equivalence relation on $\Sigma$.
\end{lem}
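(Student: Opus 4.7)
The plan is to verify the three defining properties of an equivalence relation, namely reflexivity, symmetry, and transitivity, working directly from the definition $\psi_1 \sim_c \psi_2$ iff there exists $h \in H$ with $\psi_2 = c_h \circ \psi_1$. The whole argument reduces to the fact that the inner automorphism group $\inn{H}$ is a group under composition; once this is observed, the relation $\sim_c$ is simply the orbit equivalence for the (left) action of $\inn{H}$ on $\Sigma$ by post-composition, and orbit relations are automatically equivalence relations.

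Concretely, for reflexivity I would take $h = 1_H$ and note that $c_{1_H} = \id_H$, so $\psi = c_{1_H} \circ \psi$ for every $\psi \in \Sigma$. For symmetry, if $\psi_2 = c_h \circ \psi_1$ for some $h \in H$, then applying $c_{h^{-1}}$ on the left and using $c_{h^{-1}} \circ c_h = c_{h^{-1}h} = \id_H$ yields $\psi_1 = c_{h^{-1}} \circ \psi_2$, so $\psi_2 \sim_c \psi_1$. For transitivity, if $\psi_2 = c_h \circ \psi_1$ and $\psi_3 = c_k \circ \psi_2$, then substituting gives $\psi_3 = c_k \circ c_h \circ \psi_1 = c_{kh} \circ \psi_1$, again using the multiplicative behaviour of conjugation.

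There is no real obstacle here; the statement is a routine verification whose only content is the group-theoretic identities $c_{1_H} = \id_H$, $c_h^{-1} = c_{h^{-1}}$ and $c_k \circ c_h = c_{kh}$. The lemma is recorded only so that the quotient set ${\cal T} = \Sigma/\!\sim_c$ appearing in \rethm{mainchar} is well defined.
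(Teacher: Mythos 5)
Your verification is correct, and it is exactly the routine argument the paper has in mind: the paper records this lemma as ``an easy observation'' and gives no proof at all, so your explicit check of reflexivity, symmetry and transitivity via the identities $c_{1_H}=\id$, $c_h^{-1}=c_{h^{-1}}$ and $c_k\circ c_h=c_{kh}$ (all consistent with the paper's convention $c_h(x)=hxh^{-1}$) supplies precisely what was left implicit. Nothing is missing.
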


We note that \relem{eqrel} also holds if we restrict $\Sigma$ to the set of all surjective homomorphisms from $G$ to $H$.

\begin{proof}[Proof of \rethm{mainchar}]

Let $\lambda\in \Lambda$ such that for all $\omega \in \Lambda$ it holds that $Ker(\lambda) = Ker(\omega)$ as soon as they are isomorphic. Let $\phi\colon G\to G$ be any automorphism of $G$. 
Since $\lambda\circ \phi \in \Sigma$ then there is $\zeta\in \Lambda$ such that $\lambda\circ \phi\sim_{c} \zeta$, i.e.\ there exists an inner automorphism of $Q$, say $\eta$, such that $\lambda\circ \phi = \eta\circ \zeta$ and the following square is commutative
$$
\xymatrix{
G \ar^-{\zeta}[r] \ar_-{\phi}^{\cong}[d] & Q \ar[d]_- {\eta}^{\cong}\\
G \ar^-{\lambda}[r] & Q }
$$
It is easy to see that $\varphi(Ker(\zeta))=Ker(\lambda)$ and so 
$Ker(\zeta)$ is isomorphic to $Ker(\lambda)$. By the hypothesis, it follows that $Ker(\zeta) = Ker(\lambda)$, hence
$\varphi(Ker(\lambda))=\varphi(Ker(\zeta))= Ker(\lambda)$.

So we proved that $\phi(Ker(\lambda))=Ker(\lambda)$ for any automorphism of $G$, showing that $Ker(\lambda)$ is a characteristic subgroup of $G$.
\end{proof}

\begin{rem}
The proof of \rethm{mainchar} was in part motivated from the one given in \cite{M} to prove that, for $n\geq 5$, $UVP_n$ is a characteristic subgroup of $UVB_n$, see \cite[Proposition~2.15]{M}.   
\end{rem}

We finish this subsection with the following property about homomorphisms being conjugate.

\begin{prop}\label{prop:homconj}
Let $\overline{G}$ be a quotient of a group $G$ and $p\colon G\to \overline{G}$ the natural projection. 
Consider two homomorphisms $\overline{\zeta}_1, \overline{\zeta}_2\colon \overline{G}\to H$ and define $\zeta_1, \zeta_2\colon G\to H$ by $\zeta_i= \overline{\zeta}_i\circ p$, for $i=1,2$. 
Then, $\zeta_1\sim_c \zeta_2$ if and only if $\overline{\zeta}_1\sim_c \overline{\zeta}_2$. 
\end{prop}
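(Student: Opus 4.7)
The plan is to prove the two implications separately, using only the fact that $p\colon G\to \overline{G}$ is surjective (which is automatic because $\overline{G}$ is a quotient of $G$ and $p$ is the natural projection). Recall that $\overline{\zeta}_1\sim_c \overline{\zeta}_2$ means there exists $h\in H$ with $\overline{\zeta}_2 = c_h\circ \overline{\zeta}_1$, where $c_h$ is the inner automorphism of $H$ given by conjugation by $h$, and similarly for $\zeta_1\sim_c \zeta_2$.

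For the implication $\overline{\zeta}_1\sim_c \overline{\zeta}_2 \implies \zeta_1\sim_c \zeta_2$, the argument is essentially a direct substitution. Given $h\in H$ such that $\overline{\zeta}_2 = c_h\circ \overline{\zeta}_1$, precompose both sides with $p$ to obtain
\[
\zeta_2 \;=\; \overline{\zeta}_2\circ p \;=\; (c_h\circ \overline{\zeta}_1)\circ p \;=\; c_h\circ (\overline{\zeta}_1\circ p) \;=\; c_h\circ \zeta_1,
\]
so the same element $h$ witnesses $\zeta_1\sim_c \zeta_2$.

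For the reverse implication, suppose $h\in H$ satisfies $\zeta_2 = c_h\circ \zeta_1$. I would argue that the same $h$ witnesses $\overline{\zeta}_1\sim_c \overline{\zeta}_2$, using surjectivity of $p$ to lift elements of $\overline{G}$. Explicitly, given any $\bar g\in \overline{G}$, choose $g\in G$ with $p(g)=\bar g$; then
\[
\overline{\zeta}_2(\bar g) = \overline{\zeta}_2(p(g)) = \zeta_2(g) = c_h(\zeta_1(g)) = c_h\bigl(\overline{\zeta}_1(p(g))\bigr) = c_h(\overline{\zeta}_1(\bar g)),
\]
which gives $\overline{\zeta}_2 = c_h\circ \overline{\zeta}_1$, as desired. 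There is no real obstacle here beyond bookkeeping; the only point to note is that the choice of lift $g$ is irrelevant, since the computation passes through $\zeta_2(g)$ only after composition with $p$, so well-definedness of $\overline{\zeta}_2$ absorbs any ambiguity. Alternatively, one could simply remark that $p$ is an epimorphism, so equality of two homomorphisms out of $\overline{G}$ can be verified by composing with $p$, which reduces the statement to the identity $\zeta_2 = c_h\circ \zeta_1$ already assumed.
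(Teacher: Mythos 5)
Your proof is correct and follows essentially the same route as the paper's: one direction is just precomposition with $p$, and the other uses surjectivity of $p$ to check the conjugation identity on all of $\overline{G}$, with the same witness $h$ in both cases. The paper writes out the lifting direction and declares the converse similar, so your version is if anything slightly more explicit.
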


\begin{proof}
Suppose that $\zeta_1\sim_c \zeta_2$. 
By definition there is $h\in H$ such that $\zeta_1(g) = h \zeta_2(g)h^{-1}$, for all $g\in G$.
So,  $\overline{\zeta}_1\circ p(g) = h \overline{\zeta}_2\circ p(g) h^{-1}$, for all $g\in G$, that is equivalent to $\overline{\zeta}_1 (\overline{g}) = h \overline{\zeta}_2(\overline{g}) h^{-1}$, for all $\overline{g}\in \overline{G}$. Hence, $\overline{\zeta}_1\sim_c \overline{\zeta}_2$.

The proof of the converse is similar.
\end{proof}

\section{Characteristic subgroups of virtual braid groups}\label{sec:char}

The main objective of this section is to prove that the pure virtual braid groups $VP_n$ are characteristic subgroups of $VB_n$ for $n\geq 4$ and that the groups $KB_n$ are characteristic subgroups of $VB_n$ for $n\geq 3$. We recall that a group homomorphism $\psi\colon G\to H$ is said to be \emph{abelian} if its image $\psi(G)$ is an abelian subgroup of $H$. 
The homomorphisms $\pi_P$ and $\pi_K$ defined in \resubsec{vbn} will be mentioned several times in this section.

\subsection{Two strings case}

From the presentation of the virtual braid group $VB_n$ and the definition of the groups $WB_n$ and $UVB_n$, see \resec{prelim}, it follows that
$$
VB_2=WB_2=UVB_2=\setang{\sigma_1, v_1}{v_1^2=1} \cong \Z\ast \Z_2. 
$$

\begin{rem}\label{rem:notchar}
 In \cite[Remark~2.6]{M} the author constructed an automorphism $\alpha\colon VB_2\to VB_2$ defined by $\alpha(\sigma_1)=\sigma_1^{-1}v_1$ and $\alpha(v_1)=v_1$. 
Using this automorphism she proved that $VP_2=WP_2=UVP_2$ is not characteristic in $VB_2=WB_2=UVB_2$. 

We note that the same automorphism may be used to verify that $KB_2$ is  not characteristic in $VB_2$.
\end{rem}

\subsection{Homomorphisms from virtual braid groups to the symmetric group: Three strings case}

In this subsection we will use the following presentation of the virtual braid group with 3 strings (see \resec{prelim} for a presentation of $VB_n$):
\begin{equation}\label{eq:presvb3}
VB_3=\setang{\sigma_1, \sigma_2, v_1, v_2}{\sigma_1\sigma_2\sigma_1=\sigma_2\sigma_1\sigma_2, v_1v_2v_1=v_2v_1v_2, v_1v_2\sigma_1=\sigma_2v_1v_2, v_1^2=1, v_2^2=1} 
\end{equation}
and the presentation
\begin{equation}\label{eqn:press3}
S_3=\setang{\tau_1, \tau_2}{\tau_1\tau_2\tau_1=\tau_2\tau_1\tau_2, \tau_1^2=1, \tau_2^2=1} 
\end{equation}
of the symmetric group $S_3$.

Define, for $1\leq i\leq 8$, the following homomorphisms $\psi_i\colon VB_3\to S_3$:
\begin{enumerate}
	\item $\psi_1(v_1)=\tau_1$, $\psi_1(v_2)=\tau_1$, $\psi_1(\sigma_1)=\tau_2$, $\psi_1(\sigma_2)=\tau_2$;
	\item $\psi_2(v_1)=\tau_1$, $\psi_2(v_2)=\tau_2$, $\psi_2(\sigma_1)=\tau_1$, $\psi_2(\sigma_2)=\tau_2$. 
	In this case $\psi_2$ is equal to the homomorphism $\pi_P$;  
  \item $\psi_3(v_1)=\tau_1$, $\psi_3(v_2)=\tau_2$, $\psi_3(\sigma_1)=\tau_2$, $\psi_3(\sigma_2)=\tau_1\tau_2\tau_1$;
  \item $\psi_4(v_1)=\tau_1$, $\psi_4(v_2)=\tau_2$, $\psi_4(\sigma_1)=\tau_1\tau_2\tau_1$, $\psi_4(\sigma_2)=\tau_1$;
  \item $\psi_5(v_1)=\tau_1$, $\psi_5(v_2)=\tau_2$, $\psi_5(\sigma_1)=\tau_1\tau_2$, $\psi_5(\sigma_2)=\tau_1\tau_2$;
  \item $\psi_6(v_1)=\tau_1$, $\psi_6(v_2)=\tau_2$, $\psi_6(\sigma_1)=\tau_2\tau_1$, $\psi_6(\sigma_2)=\tau_2\tau_1$;	
	\item $\psi_7(v_1)=\tau_1$, $\psi_7(v_2)=\tau_2$, $\psi_7(\sigma_1)=1$, $\psi_7(\sigma_2)=1$. In this case $\psi_7$ is equal to the homomorphism $\pi_K$; 
	\item $\psi_8(v_1)=\tau_1$, $\psi_8(v_2)=\tau_1$, $\psi_8(\sigma_1)=\tau_1\tau_2$, $\psi_8(\sigma_2)=\tau_1\tau_2$.
\end{enumerate}

\begin{rem}
We note that the homomorphisms $\{\psi_i\mid 1\leq i \leq 8\}$ are not pairwise conjugate. 
Indeed, first we can see that the homomorphisms $\psi_1$ and $\psi_8$ are not conjugate to $\psi_i$ for $i=2,3,\ldots, 7$ because 
$\psi_j(v_1)=\psi_j(v_2)$ for $j=1,8$ and this is not the case for the $\psi_i$'s with $i=2,3,\ldots, 7$. Moreover, $\psi_1$ is not conjugate to $\psi_8$ because $\psi_1(\sigma_1)$ is an element of order 2, while $\psi_8(\sigma_1)$ has order 3.

Now assume that $\psi_i$ is conjugate to $\psi_j$ with $3 \leq i,j\leq 7$ and let $\lambda\in S_3$ be such that $\psi_i = c_\lambda \circ \psi_j$.
As $\psi_i(v_1)=\psi_j(v_1) = \tau_1$, it follows that $\lambda \tau_1 \lambda^{-1} =\tau_1 $ and analogously from $\psi_i(v_2)=\psi_j(v_2) = \tau_2$ we find that $\lambda \tau_2 \lambda^{-1} =\tau_2$. Hence $\lambda$ centralizes both $\tau_1$ and $\tau_2$ and so $\lambda$ belongs to the center of $S_3$ (because $S_3$ is generated by $\tau_1$ and $\tau_2$). But the center of $S_3$ is trivial, so $\lambda =1$ from which we get that $\psi_i = \psi_j$ and so $i=j$. 
\end{rem}

\begin{thm}\label{thm:surjvb3}
Let $\psi\colon VB_3\to S_3$ be a homomorphism.  Then, up to conjugation, one of the following possibilities holds
\begin{enumerate}
	\item $\psi$ is abelian;
	\item $\psi\in \{\psi_i\mid 1\leq i \leq 8\}$.
\end{enumerate}
\end{thm}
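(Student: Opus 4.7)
The plan is to classify all homomorphisms $\psi \colon VB_3 \to S_3$ by enumerating the possible images of the four generators. Write $a = \psi(v_1)$, $b = \psi(v_2)$, $c = \psi(\sigma_1)$, $d = \psi(\sigma_2)$. The defining relations of $VB_3$ from \reqref{presvb3} translate into four constraints in $S_3$: the involution relations $a^2 = b^2 = 1$, the two braid relations $aba = bab$ and $cdc = dcd$, and the mixed relation $abc = d\,ab$, which rewrites as $d = (ab)\,c\,(ab)^{-1}$.

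The first step is to classify the admissible pairs $(a,b)$. Since $a,b$ are involutions in $S_3$ (possibly trivial), a direct check shows that $aba = bab$ holds exactly when $a = b$ or when $a$ and $b$ are two distinct transpositions. Using the freedom of simultaneous conjugation by an element of $S_3$, there are thus three cases to consider: (A) $a = b = 1$, (B) $a = b = \tau_1$, and (C) $a = \tau_1$, $b = \tau_2$. In cases (A) and (B) we have $ab = 1$, so the mixed relation forces $d = c$, and the image of $\psi$ equals $\langle a, c\rangle$. In (A) this is cyclic, hence $\psi$ is abelian. In (B) the image is abelian precisely when $c \in \{1, \tau_1\}$; otherwise, the residual conjugation freedom is by the centralizer $\{1,\tau_1\}$ of $a = b = \tau_1$, which identifies $\{\tau_2, \tau_1\tau_2\tau_1\}$ and $\{\tau_1\tau_2, \tau_2\tau_1\}$, producing two non-abelian representatives conjugate to $\psi_1$ and $\psi_8$ respectively.

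In case (C) one has $ab = \tau_1\tau_2$, so $d = \tau_1\tau_2 \cdot c \cdot \tau_2\tau_1$ is determined by $c$, and the residual conjugation freedom is trivial since $Z(S_3) = 1$. I would enumerate the six choices $c \in S_3$ and compute $d$, obtaining the tuples corresponding respectively to $\psi_7$, $\psi_2$, $\psi_3$, $\psi_4$, $\psi_5$, $\psi_6$ for $c = 1,\, \tau_1,\, \tau_2,\, \tau_1\tau_2\tau_1,\, \tau_1\tau_2,\, \tau_2\tau_1$. For each choice one must verify that the resulting $c, d$ satisfy the braid relation $cdc = dcd$; this is a short direct computation in $S_3$ that succeeds in all six cases.

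The main substantive work is the six-fold verification of $cdc = dcd$ in case (C), together with the case analysis ensuring that the conjugation reductions to (A), (B), (C) are exhaustive; both are routine but are the only places where careful bookkeeping is needed. Pairwise non-conjugacy of $\psi_1, \ldots, \psi_8$ is already recorded in the remark preceding the theorem, so once each $\psi$ is shown to be abelian or conjugate to one of the $\psi_i$, the classification is complete.
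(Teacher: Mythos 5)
Your proof is correct and follows essentially the same route as the paper: the paper organizes the case split via the endomorphism $\psi\circ\iota$ of $S_3$ (identity, image $\langle\tau_1\rangle$, or trivial up to conjugation), which is exactly your trichotomy (C), (B), (A) for the pair $(\psi(v_1),\psi(v_2))$, and both arguments then determine $\psi(\sigma_2)$ from $\psi(\sigma_1)$ via the mixed relation and enumerate the finitely many choices. Your explicit note that the Artin relation $cdc=dcd$ must still be verified for each candidate is a point the paper leaves implicit, but the substance is identical.
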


\begin{proof}
 Let $\psi\colon VB_3\to S_3$ be a homomorphism and let $\iota\colon S_3\to VB_3$ be the natural inclusion defined by $\iota(\tau_1)=v_1$ and $\iota(\tau_2)=v_2$. 
Then, $\psi\circ \iota$ is an endomorphism of $S_3$.  It is straightforward  to verify that, up to conjugation, $\psi\circ \iota$ is either the identity homomorphism or  $im(\psi\circ \iota)=\langle\tau_1\rangle$ or  $im(\psi\circ \iota)=\{1\}$ (the trivial subgroup).   To say that   $\psi\circ \iota$ is abelian is equivalent to saying that   $\psi\circ \iota$ is, up to equivalence,  not the identity. 

\medskip

First suppose that $\psi\circ \iota$ is not the identity. 
Hence, from the relations $v_1^2=1$, $v_2^2=1$ and $v_1v_2v_1=v_2v_1v_2$ we have $\psi(v_1)=\psi(v_2)=w_1\in S_3$ with $w_1^2=1$. 
From the relation $v_1v_2\sigma_1=\sigma_2v_1v_2$ we get $\psi(\sigma_1)=\psi(\sigma_2)=w_2\in S_3$. 
Notice that, up to conjugacy, $w_1$ is either $1$ or $\tau_1$. 
Also if $w_1=1$ or $w_2=1$ then $\psi$ is abelian. 
    
So, we suppose that $w_1=\tau_1$ and that $w_2$ is a  non trivial element in $S_3$. 
Now we analyse the possible values of $ w_2 \in S_3$ such that $\psi$ is a homomorphism.
\begin{itemize}
	\item If $\psi(v_1)=\tau_1$, $\psi(v_2)=\tau_1$, $\psi(\sigma_1)=\tau_1$, $\psi(\sigma_2)=\tau_1$ then $\psi$ is abelian.
	\item Suppose that $\psi(v_1)=\tau_1$, $\psi(v_2)=\tau_1$, $\psi(\sigma_1)=\tau_2$, $\psi(\sigma_2)=\tau_2$. This homomorphism is $\psi_1$. 
	\item If $\psi(v_1)=\tau_1$, $\psi(v_2)=\tau_1$, $\psi(\sigma_1)=\tau_1\tau_2$, $\psi(\sigma_2)=\tau_1\tau_2$ This homomorphism is $\psi_8$. 
	\item Suppose that $\psi(v_1)=\tau_1$, $\psi(v_2)=\tau_1$, $\psi(\sigma_1)=\tau_1\tau_2\tau_1$, $\psi(\sigma_2)=\tau_1\tau_2\tau_1$. 
	Then $\psi$ is conjugate to $\psi_1$.
	\item Let $\psi(v_1)=\tau_1$, $\psi(v_2)=\tau_1$, $\psi(\sigma_1)=\tau_2\tau_1$, $\psi(\sigma_2)=\tau_2\tau_1$. 
	Then $\psi$ is conjugate to $\psi_8$.

 \end{itemize}
For any choice of $w_2=\psi(\sigma_1)=\psi(\sigma_2)$ in $S_3$ we obtain an abelian homomorphism or a homomorphism that is conjugate to $\psi_1$ or $\psi_8$.

\medskip

Now, suppose that $\psi\circ \iota$ is the identity homomorphism. 
This implies that 
$$
\psi(v_1)=\tau_1 \textrm{ and } \psi(v_2)=\tau_2.
$$
From the mixed relation $v_1v_2\sigma_1v_2v_1=\sigma_2$ it follows that  if we know $\psi(\sigma_1)$ then $\psi(\sigma_2)$ is completely determined. 
We analyse the possible values of $\psi(\sigma_1)$.
\begin{itemize}
	\item Suppose that $\psi(\sigma_1)=1$, $\psi(\sigma_2)=1$. This homomorphism is  $\psi_7=\pi_K$.
	\item Suppose that $\psi(\sigma_1)=\tau_1$. Then $\psi(\sigma_2)=\tau_1\tau_2\tau_1\tau_2\tau_1=\tau_2$. This homomorphism is  $\psi_2=\pi_P$.
  \item Let $\psi(\sigma_1)=\tau_2$, then $\psi(\sigma_2)=\tau_1\tau_2\tau_2\tau_2\tau_1=\tau_1\tau_2\tau_1$. This homomorphism is $\psi_3$.
  \item If $\psi(\sigma_1)=\tau_1\tau_2\tau_1$, $\psi(\sigma_2)= \tau_1\tau_2 \tau_2\tau_1\tau_2 \tau_2\tau_1 =\tau_1$. This homomorphism is $\psi_4$.
  \item Let $\psi(\sigma_1)=\tau_1\tau_2$. So, $\psi(\sigma_2)= \tau_1\tau_2 \tau_1\tau_2 \tau_2\tau_1 =\tau_1\tau_2$. This   homomorphism is $\psi_5$.
  \item Suppose that $\psi(\sigma_1)=\tau_2\tau_1$. Then $\psi(\sigma_2)= \tau_1\tau_2 \tau_2\tau_1 \tau_2\tau_1 =\tau_2\tau_1$. This  homomorphism is $\psi_6$.
\end{itemize}
From the computations above we proved that, up to conjugation, $\psi$ is abelian or $\psi\in \{\psi_i\mid 1\leq i \leq 8\}$.
\end{proof}

Now we study the homomorphisms from the welded braid group $WB_3$ and the unrestricted braid group $UVB_3$ to the symmetric group $S_3$. 
The study of these homomorphisms follows the same lines as the proof of \rethm{surjvb3}. 
Instead of repeating this method, we may verify which homomorphisms given in \rethm{surjvb3} respect the forbidden relations given in \redef{uvbn}. 

\begin{thm}\label{thm:surjwb3}
Let $\omega\colon WB_3\to S_3$ be a homomorphism.  
Then, up to conjugation, one of the following possibilities holds
\begin{enumerate}
	\item $\omega$ is abelian;
	\item $\omega=\overline{\psi}$, where $\psi\in \{\psi_i\mid 1\leq i \leq 5\}$ as in \rethm{surjvb3} and $\overline{\psi}\colon WB_3\to S_3$ is the induced homomorphism in the quotient of $VB_3$ by adding the forbidden relation \textit{(\ref{forb})} of \redef{uvbn}.
\end{enumerate}
\end{thm}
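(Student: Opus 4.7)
The plan is to leverage \rethm{surjvb3} and reduce the classification on $WB_3$ to checking, for each $\psi_i$, whether the induced map factors through the quotient. First, let $p\colon VB_3\to WB_3$ denote the natural projection, so that $WB_3$ is obtained from $VB_3$ by adding the single forbidden relation (for $n=3$, $i=1$)
\[
v_1\sigma_2\sigma_1 = \sigma_2\sigma_1 v_2.
\]
Given any homomorphism $\omega\colon WB_3\to S_3$, the composition $\psi := \omega\circ p\colon VB_3\to S_3$ is a homomorphism that vanishes on the normal closure of the forbidden relation; conversely, every such $\psi$ descends to a unique $\omega$. By \reprop{homconj}, two such $\omega$'s are conjugate if and only if the corresponding $\psi$'s are. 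Hence the classification of homomorphisms $WB_3\to S_3$ up to conjugation reduces to selecting those conjugacy classes in \rethm{surjvb3} whose representatives respect the forbidden relation.

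Next, I would apply \rethm{surjvb3}: every $\psi\colon VB_3\to S_3$ is, up to conjugation, either abelian or one of $\psi_1,\dots,\psi_8$. The abelian case descends trivially to an abelian $\omega$, so it remains only to test each $\psi_i$ against the identity $\psi_i(v_1)\psi_i(\sigma_2)\psi_i(\sigma_1)=\psi_i(\sigma_2)\psi_i(\sigma_1)\psi_i(v_2)$ in $S_3$. These are short computations in the symmetric group; I would record them in a small verification table using the explicit values listed after \eqref{eqn:press3}.

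The outcome of this check will be exactly the dichotomy in the statement: $\psi_1,\dots,\psi_5$ satisfy the forbidden relation (and therefore descend to $WB_3$), whereas $\psi_6,\psi_7,\psi_8$ fail it. For instance, $\psi_7=\pi_K$ sends the left-hand side to $\tau_1$ and the right-hand side to $\tau_2$, so it does not factor through $WB_3$; similarly direct products of the given images for $\psi_6$ and $\psi_8$ give non-equal elements of $S_3$. Combined with the reduction of the first paragraph, this completes the classification.

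The only mildly delicate point — and the one I would be most careful about — is the bookkeeping between the two notions of conjugacy (in $VB_3$ via $S_3$ and in $WB_3$ via $S_3$); this is precisely what \reprop{homconj} is designed to handle, so invoking it ensures that no conjugacy classes collapse or split in an unexpected way when passing to the quotient. Aside from that, the proof is routine verification, and no new ideas beyond \rethm{surjvb3} and \reprop{homconj} are needed.
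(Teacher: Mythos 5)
Your proposal is correct and follows essentially the same route as the paper: reduce, via the quotient map and \reprop{homconj}, to checking which of the representatives $\psi_1,\dots,\psi_8$ from \rethm{surjvb3} preserve the forbidden relation $v_1\sigma_2\sigma_1=\sigma_2\sigma_1v_2$, and verify that exactly $\psi_1,\dots,\psi_5$ do while $\psi_6,\psi_7,\psi_8$ fail. Your explicit invocation of \reprop{homconj} for the conjugacy bookkeeping is slightly more careful than the paper's write-up, but the substance is identical.
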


\begin{proof}
Let $\omega\colon WB_3\to S_3$ be a homomorphism. If $\omega$ is abelian there is nothing to prove. Suppose that $\omega$ is non-abelian. 
Let $\psi\colon VB_3\to S_3$ be a homomorphism that belongs to $\{\psi_i\mid 1\leq i \leq 8\}$ as in \rethm{surjvb3}. 
We verify which of these homomorphisms satisfy the forbidden relation \textit{(\ref{forb})} of \redef{uvbn}.

Let $\psi=\psi_1$. 
Then $\psi_1(v_1\sigma_2\sigma_1)=\tau_1\tau_2\tau_2=\tau_1$ and $\psi_1(\sigma_2\sigma_1v_2)=\tau_2\tau_2\tau_1=\tau_1$. Hence, $\overline{\psi_1}\colon WB_3\to S_3$ is a homomorphism. 
Now, let $\psi=\psi_7$. 
Recall that $\psi_7(v_1)=\tau_1$, $\psi_7(v_2)=\tau_2$, $\psi_7(\sigma_1)=1$ and $\psi_7(\sigma_2)=1$.
Then $\psi_7(v_1\sigma_2\sigma_1)=\tau_1$ and $\psi_7(\sigma_2\sigma_1v_2)=\tau_2$. Hence, $\psi_7$ does not induce a homomorphism in the quotient group $WB_3$. The verification of the other homomorphisms $\psi\in \{\psi_i\mid 1\leq i \leq 8\}$ are similar and with this computation we obtain the result of this theorem.
\end{proof}

Using the same idea of the proof of the last theorem we get the following result about unrestricted virtual braid groups.

\begin{thm}\label{thm:surjuvb3}
Let $\mu\colon UVB_3\to S_3$ be a homomorphism. 
Then, up to conjugation, one of the following possibilities holds
\begin{enumerate}
	\item $\mu$ is abelian;
	\item $\mu=\overline{\overline{\psi}}$, where $\psi\in \{\psi_i\mid 1\leq i \leq 4\}$ as in \rethm{surjvb3} and $\overline{\overline{\psi}}\colon UVB_3\to S_3$ is the induced homomorphism in the quotient of $VB_3$ by adding the forbidden relations \textit{(\ref{forb})} and \textit{(\ref{forb2})} of \redef{uvbn}.
\end{enumerate}
\end{thm}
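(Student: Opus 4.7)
The plan is to mirror the proof of \rethm{surjwb3}, exploiting the fact that $UVB_3$ is a further quotient of $VB_3$. First I would take any homomorphism $\mu\colon UVB_3\to S_3$ and lift it through the natural projection $q\colon VB_3\to UVB_3$ to $\psi=\mu\circ q\colon VB_3\to S_3$. By \rethm{surjvb3}, $\psi$ is either abelian or conjugate to one of $\psi_1,\ldots,\psi_8$, and by \reprop{homconj} this conjugacy descends to $UVB_3$, so $\mu$ is itself either abelian or conjugate to the induced map $\overline{\overline{\psi_i}}$ for some $i$.

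With the abelian case being immediate, the remaining work is to identify exactly which $\psi_i$ factor through $UVB_3$, that is, which ones kill both forbidden relations \textit{(\ref{forb})} and \textit{(\ref{forb2})} of \redef{uvbn}. The first relation has already been checked in \rethm{surjwb3}, where it was shown that only $\psi_1,\ldots,\psi_5$ survive. So I would only need to evaluate $v_2\sigma_1\sigma_2=\sigma_1\sigma_2 v_1$ under these five maps. The verifications for $\psi_1$ and $\psi_2$ are immediate, and those for $\psi_3$ and $\psi_4$ reduce to the braid relation $\tau_1\tau_2\tau_1=\tau_2\tau_1\tau_2$ in $S_3$. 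The new step, and the only one that discards a candidate, is $\psi_5$: here $\psi_5(v_2\sigma_1\sigma_2)=\tau_2(\tau_1\tau_2)^2=\tau_1$ while $\psi_5(\sigma_1\sigma_2 v_1)=(\tau_1\tau_2)^2\tau_1=\tau_2$, so the second forbidden relation fails. This leaves precisely $\overline{\overline{\psi_1}},\ldots,\overline{\overline{\psi_4}}$, yielding the theorem.

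I do not expect any real obstacle. All the conceptual work is already in place: \rethm{surjvb3} enumerates the conjugacy classes over $VB_3$, and \reprop{homconj} transports conjugacy along the quotient. The one point that needs a little care is to confirm that, when reducing to $UVB_3$, it is legitimate to test the forbidden relations on the chosen representative $\psi_i$ rather than on an arbitrary member of its conjugacy class; but since the forbidden relations are relations of $VB_3$, whether they are sent to $1$ is a conjugation-invariant property of a homomorphism, so this causes no trouble.
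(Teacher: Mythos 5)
Your proposal is correct and follows essentially the same route as the paper: the paper's proof of this theorem simply refers back to the method of \rethm{surjwb3}, namely lifting to $VB_3$, invoking \rethm{surjvb3} and \reprop{homconj}, and then checking which representatives kill the forbidden relations. Your computation that $\psi_5$ fails relation \textit{(\ref{forb2})} (sending $v_2\sigma_1\sigma_2$ to $\tau_1$ but $\sigma_1\sigma_2v_1$ to $\tau_2$) while $\psi_1,\dots,\psi_4$ survive is exactly the verification the paper leaves implicit.
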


\begin{proof}
The proof of this theorem is similar to the one of \rethm{surjwb3}.
\end{proof}

\begin{rem}
It follows from \reprop{homconj} and \rethm{surjvb3} that the homomorphisms $\{\overline{\psi_i}\mid 1\leq i \leq 5\}$ of \rethm{surjwb3}(b) are pairwise non conjugate. 
Similarly for the homomomorphisms of \rethm{surjuvb3}(b). 

\end{rem}

\subsection{Homomorphisms from virtual braid groups to the symmetric group: Four strings case}

In this subsection we will use the following presentation of the symmetric group
\begin{equation}\label{eqn:press3}
S_4=\setang{\tau_1, \tau_2, \tau_3}{\tau_1\tau_2\tau_1=\tau_2\tau_1\tau_2, \tau_2\tau_3\tau_2=\tau_3\tau_2\tau_3, \tau_1\tau_3 = \tau_3\tau_1, \tau_1^2=\tau_2^2=\tau_3^2=1} 
\end{equation}
and the presentation of the virtual braid group with 4 strings (see \resec{prelim} for a presentation of $VB_n$) with generators $\sigma_1, \sigma_2, \sigma_3, v_1, v_2, v_3$ and defining relations:
\begin{itemize}
	\item[(AR)] $\sigma_1\sigma_2\sigma_1=\sigma_2\sigma_1\sigma_2$, $\sigma_2\sigma_3\sigma_2=\sigma_3\sigma_2\sigma_3$, $\sigma_1\sigma_3=\sigma_3\sigma_1$, 
	
	\item[(PR)] $v_1v_2v_1=v_2v_1v_2$, $v_2v_3v_2=v_3v_2v_3$, $v_1v_3=v_3v_1$, $v_1^2=1$, $v_2^2=1$, $v_3^2=1$,

  \item[(MR)] $\sigma_1v_3=v_3\sigma_1$, $\sigma_3v_1=v_1\sigma_3$, $v_1v_2\sigma_1=\sigma_2v_1v_2$, $v_2v_3\sigma_2=\sigma_3v_2v_3$.
\end{itemize}

Let $\eta\colon S_4\to S_4$ be the homomorphism defined by $\eta(\tau_1)=\eta(\tau_3)=\tau_1$ and $\eta(\tau_2)=\tau_2$.

\begin{lem}\label{lem:lems4}

Let $\phi\colon S_4\to S_4$ be any endomorphism of $S_4$. Then, up to conjugation, one of the following possibilities holds.
\begin{enumerate}
	\item $\phi$ is abelian,
	\item $\phi$ is the identity homomorphism,
	\item $\phi=\eta$.
\end{enumerate}
\end{lem}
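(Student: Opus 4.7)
The plan is to case-split on the kernel $K := \ker(\phi)$. Since $S_4$ has exactly four normal subgroups---the trivial group, the Klein four-group $V_4=\{1,(12)(34),(13)(24),(14)(23)\}$, the alternating group $A_4$, and $S_4$ itself---there are four possibilities for $K$.

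Three of these cases are immediate. If $K=S_4$ or $K=A_4$, then $\mathrm{Im}(\phi)$ is cyclic of order $1$ or $2$, so $\phi$ is abelian. If $K=\{1\}$, then $\phi$ is injective and hence (by finiteness of $S_4$) an automorphism; since every automorphism of $S_4$ is inner, $\phi$ is conjugate to the identity.

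The heart of the argument is the remaining case $K=V_4$. Here $\phi$ factors as $S_4 \xrightarrow{p} S_4/V_4 \xrightarrow{\iota} S_4$, where $S_4/V_4 \cong S_3$ and $\iota$ is injective. I would concretely realise $p$ via the natural action of $S_4$ on the three partitions of $\{1,2,3,4\}$ into two pairs of size two, namely $\{12|34\}$, $\{13|24\}$, $\{14|23\}$. A direct check shows that $\tau_1=(12)$ and $\tau_3=(34)$ induce the same transposition on these three partitions (both fix $\{12|34\}$ and swap the other two), while $\tau_2=(23)$ induces a different transposition; together these two transpositions generate the target $S_3$. For the injection $\iota$: its image is a subgroup of $S_4$ isomorphic to $S_3$, every such subgroup is a point-stabiliser, and all point-stabilisers are $S_4$-conjugate; combined with $\aut{S_3}=\inn{S_3}$, this shows $\iota$ is unique up to conjugation in $S_4$. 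Absorbing this conjugation into the statement of the lemma, I may assume $\iota$ sends $p(\tau_1)$ to $\tau_1$ and $p(\tau_2)$ to $\tau_2$, which gives $\phi(\tau_1)=\phi(\tau_3)=\tau_1$ and $\phi(\tau_2)=\tau_2$; that is, $\phi=\eta$.

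The only real obstacle is the bookkeeping in the $V_4$ case: verifying that the freedom in the choice of $\iota$ can always be compensated by an inner automorphism of the target $S_4$. This reduces to the two standard facts $\aut{S_3}=\inn{S_3}$ and the $S_4$-conjugacy of all point-stabilisers, so nothing substantive is needed beyond the explicit tabulation of the partition action.
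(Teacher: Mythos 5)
Your proof is correct, but it follows a genuinely different route from the paper's. The paper argues directly on generator images: each $\phi(\tau_i)$ has order dividing $2$; if some $\phi(\tau_i)$ is trivial or a product of two disjoint transpositions then (all $\tau_i$ being conjugate) $\phi$ is abelian; otherwise one normalises $\phi(\tau_1)=\tau_1$ up to conjugation and uses the relation $\tau_1\tau_3=\tau_3\tau_1$ to force $\phi(\tau_3)$ into the centraliser $\{1,\tau_1,\tau_3,\tau_1\tau_3\}$ of $\langle\tau_1\rangle$, after which a short enumeration yields the identity or $\eta$. You instead classify by the kernel, using that the normal subgroups of $S_4$ are exactly $\{1\}$, $V_4$, $A_4$, $S_4$, and reduce the only nontrivial case $K=V_4$ to the rigidity of order-$6$ subgroups of $S_4$ (all are point stabilisers, hence conjugate) together with $\aut{S_3}=\inn{S_3}$ and $\aut{S_4}=\inn{S_4}$. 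All the facts you invoke are standard and correctly applied, and the partition-action computation of $p(\tau_i)$ does pin down $\eta$. Your argument is more structural and explains conceptually why there is exactly one ``exotic'' class, namely the one attached to the unique normal subgroup with non-abelian quotient; the paper's argument is more elementary and matches the generator-by-generator style used throughout \resec{char}, where the same centraliser constraint is reused to enumerate the possible images of $\sigma_1$ in the proof of \rethm{surjvb4}.
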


\begin{proof}
This follows from examining case by case all the possible images of the transpositions. 
We note that if, for some $i=1,2,3$, $\phi(\tau_i)=1$ then $\phi$ is the trivial homomorphism. 
Hence, we do not consider this case.
Also, we notice that if, for some $i=1,2,3$, $\phi(\tau_i)$ is a product of different transpositions then $\phi$ is abelian with image the cyclic group of order $2$.

Suppose that $\phi(\tau_1)=\tau_1$. 
It follows from the relation $\tau_1\tau_3 = \tau_3\tau_1$ that $\phi(\tau_3)$ lies in $\{1, \tau_1, \tau_3, \tau_1\tau_3 \},$ the centralizer of $\langle \tau_1 \rangle$ in $S_4$. 
By examining the possible images of $\tau_3$ we obtain, up to conjugation, that either $\phi$ is abelian, or $\phi$ is the identity homomorphism or $\phi=\eta$.
\end{proof}

Define, for $1\leq i\leq 6$, the following homomorphisms $\delta_i\colon VB_4\to S_4$:
\begin{enumerate}
	\item $\delta_1(v_1)=\tau_1$, $\delta_1(v_2)=\tau_2$, $\delta_1(v_3)=\tau_1$, $\delta_1(\sigma_1)=\tau_1$, $\delta_1(\sigma_2)=\tau_2$, $\delta_1(\sigma_3)=\tau_1$;
	\item $\delta_2(v_1)=\tau_1$, $\delta_2(v_2)=\tau_2$, $\delta_2(v_3)=\tau_1$, $\delta_2(\sigma_1)=\tau_3$, $\delta_2(\sigma_2)=\tau_3\tau_2\tau_1\tau_2\tau_3$, $\delta_2(\sigma_3)=\tau_3$;
  \item $\delta_3(v_1)=\tau_1$, $\delta_3(v_2)=\tau_2$, $\delta_3(v_3)=\tau_3$, $\delta_3(\sigma_1)=\tau_1$, $\delta_3(\sigma_2)=\tau_2$, $\delta_3(\sigma_3)=\tau_3$. 
	This $\delta_3$ is equal to the homomorphism $\pi_P$; 
  \item $\delta_4(v_1)=\tau_1$, $\delta_4(v_2)=\tau_2$, $\delta_4(v_3)=\tau_3$, $\delta_4(\sigma_1)=\tau_3$, $\delta_4(\sigma_2)=\tau_3\tau_2\tau_1\tau_2\tau_3$, $\delta_4(\sigma_3)=\tau_1$;
	\item $\delta_5(v_1)=\tau_1$, $\delta_5(v_2)=\tau_2$, $\delta_5(v_3)=\tau_3$, $\delta_5(\sigma_1)=1$, $\delta_5(\sigma_2)=1$, $\delta_5(\sigma_3)=1$. This $\delta_5$ is equal to the homomorphism $\pi_K$; 
	\item $\delta_6(v_1)=\tau_1$, $\delta_6(v_2)=\tau_2$, $\delta_6(v_3)=\tau_1$, $\delta_6(\sigma_1)=1$, $\delta_6(\sigma_2)=1$, $\delta_6(\sigma_3)=1$.
\end{enumerate}

\begin{rem}
We note that the homomorphisms $\{\delta_i\mid 1\leq i \leq 6\}$ are not pairwise conjugate.
\end{rem}

\begin{thm}\label{thm:surjvb4}
Let $\delta\colon VB_4\to S_4$ be a homomorphism.  Then, up to conjugation, one of the following possibilities holds
\begin{enumerate}
	\item $\delta$ is abelian;
	\item $\delta\in \{\delta_i\mid 1\leq i \leq 6\}$.
\end{enumerate}
\end{thm}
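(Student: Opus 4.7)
The plan is to mirror the strategy used for \rethm{surjvb3}. Let $\iota\colon S_4\to VB_4$ be the homomorphism defined by $\iota(\tau_i)=v_i$ for $i=1,2,3$; this is well-defined since the permutation relations (PR) together with $v_i^2=1$ match the defining relations of $S_4$. Then $\delta\circ\iota$ is an endomorphism of $S_4$, and by \relem{lems4}, after conjugating $\delta$ we may assume that $\delta\circ\iota$ is either (i) abelian, (ii) the identity, or (iii) equal to $\eta$. In each case I would determine which values of $\delta(\sigma_1)$ extend to a homomorphism, using the mixed relations to propagate $\delta(\sigma_1)$ to $\delta(\sigma_2)$ and $\delta(\sigma_3)$, and the Artin and commutation relations to eliminate invalid choices.

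First I would handle case (ii), where $\delta(v_i)=\tau_i$ for $i=1,2,3$. The mixed relations yield
\[
\delta(\sigma_2)=\tau_1\tau_2\,\delta(\sigma_1)\,\tau_2\tau_1, \qquad \delta(\sigma_3)=\tau_2\tau_3\,\delta(\sigma_2)\,\tau_3\tau_2,
\]
so $\delta(\sigma_1)$ determines the other two images. The commutation relation $\sigma_1v_3=v_3\sigma_1$ further forces $\delta(\sigma_1)$ into the centralizer $C_{S_4}(\tau_3)=\{1,\tau_1,\tau_3,\tau_1\tau_3\}$, and one checks that the companion commutation $\sigma_3v_1=v_1\sigma_3$ is then automatic. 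Running through these four candidates and verifying the braid relations (AR) should produce, up to conjugation, exactly $\delta_5=\pi_K$ (from $\delta(\sigma_1)=1$), $\delta_3=\pi_P$ (from $\delta(\sigma_1)=\tau_1$), $\delta_4$ (from $\delta(\sigma_1)=\tau_3$), and an abelian homomorphism with image in $\langle\tau_1,\tau_3\rangle$ (from $\delta(\sigma_1)=\tau_1\tau_3$).

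Case (iii) is handled analogously: here $\delta(v_1)=\delta(v_3)=\tau_1$ and $\delta(v_2)=\tau_2$, so the mixed relations give $\delta(\sigma_2)=\tau_1\tau_2\,\delta(\sigma_1)\,\tau_2\tau_1$ and $\delta(\sigma_3)=\tau_2\tau_1\,\delta(\sigma_2)\,\tau_1\tau_2$, while the commutation with $v_3=v_1$ forces $\delta(\sigma_1)\in C_{S_4}(\tau_1)=\{1,\tau_1,\tau_3,\tau_1\tau_3\}$. A parallel enumeration, together with verification of $\sigma_1\sigma_3=\sigma_3\sigma_1$ and the braid relations, should leave, up to conjugation, only $\delta_6$, $\delta_1$, $\delta_2$ (along with abelian outcomes). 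In the remaining case (i), the $\delta(v_i)$ all lie in a common abelian subgroup of $S_4$; the mixed relations then force each $\delta(\sigma_i)$ to commute with the image of $\iota$, and a short direct argument shows that $\delta$ itself is abelian.

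The main obstacle is simply the size of the case analysis: unlike $VB_3$, the group $VB_4$ has three classical generators constrained by two Artin relations, one commutation, and two mixed relations, so each of the three cases above branches into several sub-cases that must be independently checked against AR. The decisive simplification comes from \relem{lems4}, which reduces the endomorphisms of $S_4$ to three representatives up to conjugation, and from the centralizer computations above, which cut the enumeration for $\delta(\sigma_1)$ down to four elements in each non-abelian case.
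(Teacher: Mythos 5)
Your strategy is the same as the paper's: compose with $\iota$, invoke \relem{lems4} to reduce to the three cases for $\delta\circ\iota$, use the two mixed relations to express $\delta(\sigma_2)$ and $\delta(\sigma_3)$ in terms of $\delta(\sigma_1)$, and use the commutation $\sigma_1v_3=v_3\sigma_1$ to confine $\delta(\sigma_1)$ to the four-element centralizer $\{1,\tau_1,\tau_3,\tau_1\tau_3\}$ before enumerating. The structure and the resulting list $\{\delta_i\}$ agree with the paper.

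There is, however, one concrete slip in your enumeration. In case (ii) you assert that the choice $\delta(\sigma_1)=\tau_1\tau_3$ produces ``an abelian homomorphism with image in $\langle\tau_1,\tau_3\rangle$.'' This cannot be right: in that case $\delta(v_i)=\tau_i$ for $i=1,2,3$, so the image of $\delta$ already contains all of $S_4$ and is in particular non-abelian; were this choice to extend to a homomorphism, it would be a surjection not conjugate to any $\delta_i$, contradicting the very statement you are proving. What actually happens (and what the paper checks) is that this branch dies: $\delta(\sigma_2)=\tau_1\tau_2(\tau_1\tau_3)\tau_2\tau_1=(1,4)(2,3)$, and since $(1,2)(3,4)$ and $(1,4)(2,3)$ commute, the Artin relation $\sigma_1\sigma_2\sigma_1=\sigma_2\sigma_1\sigma_2$ would force $(1,4)(2,3)=(1,2)(3,4)$, which is false. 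The same remark applies to the $\tau_1\tau_3$ branch of your case (iii), which you fold into ``abelian outcomes.'' Since your write-up hedges these branches with ``should produce'' rather than verifying them, the argument as stated does not actually rule out a non-abelian surjection outside the list; carrying out the AR check for $\delta(\sigma_1)=\tau_1\tau_3$ in both cases is the missing step. Your sketch of case (i) is fine (and is where the paper simply cites the argument of Bellingeri--Paris).
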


\begin{proof}
Let $\delta\colon VB_4\to S_4$ be a homomorphism and let $\iota\colon S_4\to VB_4$ be the natural inclusion defined by $\iota(\tau_1)=v_1$, $\iota(\tau_2)=v_2$ and $\iota(\tau_3)=v_3$. 
Then, $\delta\circ \iota$ is an endomorphism of $S_4$. 
From \relem{lems4}, up to conjugation, $\delta\circ \iota$ is abelian or it is $\eta$ or it is the identity homomorphism. 

We claim that when $\delta\circ \iota$ is abelian then $\delta$ is abelian. The proof given for \cite[Theorem~2.1]{BP} in the case in which the composition is abelian works for $n=4$, proving our claim.

Suppose that $\delta\circ \iota=\eta$. 
Therefore 
$$
\delta(v_1)=\tau_1,  \delta(v_2)=\tau_2 \textrm{ and } \delta(v_3)=\tau_1.
$$
From the mixed relations $v_1v_2\sigma_1v_2v_1=\sigma_2$ and $v_2v_3\sigma_2v_3v_2=\sigma_3$ we see that $\delta(\sigma_2)$ and $\delta(\sigma_3)$ are completely determined by $\delta(\sigma_1)$.
We analyse the possible values of $\delta(\sigma_1)$. 
From the relation $\sigma_1v_3 = v_3\sigma_1$ and $\delta(v_3)=\tau_1$ it follows that $\delta(\sigma_1)$ lies in the centralizer of $\langle \tau_1 \rangle$ in $S_4$, i.e. $\delta(\sigma_1) \in \{1, \tau_1, \tau_3, \tau_1\tau_3 \}$. 
\begin{itemize}
	\item Suppose that $\delta(\sigma_1)=1$, then $\delta(\sigma_2)=1$ and $\delta(\sigma_3)=1$. This homomorphism is $\delta_6$. 
	
	\item Suppose that $\delta(\sigma_1)=\tau_1$. Then $\delta(\sigma_2)=\tau_1\tau_2\tau_1\tau_2\tau_1=\tau_2$ and $\delta(\sigma_3)=\tau_2\tau_1\tau_2\tau_1\tau_2=\tau_1$. This homomorphism is $\delta_1$.  

	\item Suppose that $\delta(\sigma_1)=\tau_3$. Then $\delta(\sigma_2)=\tau_1\tau_2\tau_3\tau_2\tau_1=\tau_3\tau_2\tau_1\tau_2\tau_3$ and $\delta(\sigma_3)=\tau_2\tau_1\tau_1\tau_2\tau_3\tau_2\tau_1\tau_1\tau_2=\tau_3$. This homomorphism is $\delta_2$.  

	\item Finally, if $\delta(\sigma_1)=\tau_1\tau_3$ we do not get a homomorphism since the relation $\sigma_1\sigma_2\sigma_1=\sigma_2\sigma_1\sigma_2$ is not preserved.
\end{itemize}

Now, suppose that $\delta\circ \iota$ is the identity homomorphism. 
This implies that 
$$
\delta(v_1)=\tau_1,  \delta(v_2)=\tau_2 \textrm{ and } \delta(v_3)=\tau_3.
$$
As before, from the mixed relations $v_1v_2\sigma_1v_2v_1=\sigma_2$ and $v_2v_3\sigma_2v_3v_2=\sigma_3$ it follows that  $\delta(\sigma_1)$ determines $\delta(\sigma_2)$ and $\delta(\sigma_3)$ completely. Moreover the relation $\sigma_1 v_3=v_3 \sigma_1$ implies that $\delta(\sigma_1) \in \{1, \tau_1, \tau_3, \tau_1\tau_3 \}$ the centralizer of $\langle \tau_3 \rangle$ in $S_4$. 
We analyse the possible values of $\delta(\sigma_1)$.
\begin{itemize}
	\item Suppose that $\delta(\sigma_1)=1$, then $\delta(\sigma_2)=1$ and $\delta(\sigma_3)=1$. This homomorphism is $\delta_5=\pi_K$.

	\item Suppose that $\delta(\sigma_1)=\tau_1$. Then $\delta(\sigma_2)=\tau_1\tau_2\tau_1\tau_2\tau_1=\tau_2$ and $\delta(\sigma_3)=\tau_2\tau_3\tau_2\tau_3\tau_2=\tau_3$. This homomorphism is $\delta_3=\pi_P$. 

	\item Suppose that $\delta(\sigma_1)=\tau_3$. Then $\delta(\sigma_2)=\tau_1\tau_2\tau_3\tau_2\tau_1=\tau_3\tau_2\tau_1\tau_2\tau_3$ and $\delta(\sigma_3)=\tau_2\tau_3\tau_3\tau_2\tau_1\tau_2\tau_3\tau_3\tau_2=\tau_1$. This homomorphism is $\delta_4$.  

	\item Finally, if $\delta(\sigma_1)=\tau_1\tau_3$ we do not get a homomorphism since the relation $\sigma_1\sigma_2\sigma_1=\sigma_2\sigma_1\sigma_2$ is not preserved.
\end{itemize}

From the computations above we proved that, up to conjugation, $\delta$ is abelian or $\delta\in \{\delta_i\mid 1\leq i \leq 6\}$.
\end{proof}

Similar to the case $n=3$, we verify which homomorphisms given in \rethm{surjvb4} respect the forbidden relations given in \redef{uvbn}. 

\begin{thm}\label{thm:surjwb4}
Let $\omega\colon WB_4\to S_4$ be a homomorphism.  
Then, up to conjugation, one of the following possibilities holds
\begin{enumerate}
	\item $\omega$ is abelian;
	\item $\omega=\overline{\delta}$, where $\delta\in \{\delta_i\mid 1\leq i \leq 4\}$ as in \rethm{surjvb4} and $\overline{\delta}\colon WB_4\to S_4$ is the induced homomorphism in the quotient of $VB_4$ by adding the forbidden relation \textit{(\ref{forb})} of \redef{uvbn}.
\end{enumerate}
\end{thm}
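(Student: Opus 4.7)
My plan is to mirror closely the proof of \rethm{surjwb3}. Given a homomorphism $\omega\colon WB_4\to S_4$, let $p\colon VB_4\to WB_4$ denote the natural projection and put $\delta=\omega\circ p$. Since $\delta$ is a homomorphism from $VB_4$ to $S_4$, \rethm{surjvb4} tells me that, up to conjugation, $\delta$ is either abelian or lies in $\{\delta_i\mid 1\leq i\leq 6\}$. By \reprop{homconj} the conjugacy in the $VB_4$-level descends to the $WB_4$-level, so up to conjugation $\omega$ is either abelian or equals $\overline{\delta_i}$ for exactly those indices $i$ for which $\delta_i$ vanishes on the normal closure of the forbidden relations \textit{(\ref{forb})}, i.e.\ $v_i\sigma_{i+1}\sigma_i=\sigma_{i+1}\sigma_i v_{i+1}$ for $i=1,2$.

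The bulk of the work is then a finite verification: for each of $\delta_1,\ldots,\delta_6$ I evaluate both sides of the two forbidden relations in $S_4$ and compare. For $\delta_3=\pi_P$ the relations hold trivially because $\delta_3(v_j)=\delta_3(\sigma_j)=\tau_j$, so they reduce to the braid relation $\tau_i\tau_{i+1}\tau_i=\tau_{i+1}\tau_i\tau_{i+1}$. For $\delta_1$, a direct computation gives $\delta_1(v_1\sigma_2\sigma_1)=\tau_1\tau_2\tau_1=\tau_2\tau_1\tau_2=\delta_1(\sigma_2\sigma_1 v_2)$, and similarly for $i=2$; so $\delta_1$ descends. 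For $\delta_2$ and $\delta_4$, which involve the long element $\tau_3\tau_2\tau_1\tau_2\tau_3$, one uses the commutation $\tau_1\tau_3=\tau_3\tau_1$ together with the braid relations to reduce both sides of each forbidden relation to $\tau_3\tau_2\tau_1$, so both $\delta_2$ and $\delta_4$ descend. Finally $\delta_5=\pi_K$ and $\delta_6$ send all $\sigma_j$ to $1$, so $\delta_j(v_1\sigma_2\sigma_1)=\delta_j(v_1)=\tau_1$ while $\delta_j(\sigma_2\sigma_1v_2)=\delta_j(v_2)=\tau_2$ for $j=5,6$; these do not descend.

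This leaves exactly $\{\overline{\delta_i}\mid 1\leq i\leq 4\}$ as the nonabelian possibilities, matching the statement. Since the $\delta_i$ are pairwise non-conjugate in $VB_4$, \reprop{homconj} implies that the $\overline{\delta_i}$ are pairwise non-conjugate in $WB_4$, so the list is non-redundant. I do not anticipate any real obstacle beyond keeping track of the $S_4$-computations; the only mild care needed is in simplifying $\tau_3\tau_2\tau_1\tau_2\tau_3$ inside the mixed relation when checking $\delta_2$ and $\delta_4$, but this is a short rearrangement using the two braid relations and $\tau_1\tau_3=\tau_3\tau_1$.
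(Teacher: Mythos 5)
Your proposal is correct and takes essentially the same route as the paper: the paper likewise reduces the $WB_4$ case to \rethm{surjvb4} by noting that any $\omega$ pulls back to a homomorphism on $VB_4$, invokes \reprop{homconj} to transfer conjugacy, and then checks which of $\delta_1,\dots,\delta_6$ respect the forbidden relations \textit{(\ref{forb})}, finding exactly $\delta_1,\dots,\delta_4$ survive while $\delta_5,\delta_6$ (which kill the $\sigma_j$) do not. The only tiny slip is your claim that for $\delta_2$ and $\delta_4$ both sides of each forbidden relation reduce to $\tau_3\tau_2\tau_1$ --- that is the common value for $i=1$, whereas for $i=2$ the two sides agree but equal a different element --- and this does not affect the conclusion.
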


\begin{proof}
The proof is completely similar to the one given for \rethm{surjvb3}.
\end{proof}

Finally we get the result for unrestricted virtual braid groups.

\begin{thm}\label{thm:surjuvb4}
Let $\mu\colon UVB_4\to S_4$ be a homomorphism. 
Then, up to conjugation, one of the following possibilities holds
\begin{enumerate}
	\item $\mu$ is abelian;
	\item $\mu=\overline{\overline{\delta}}$, where $\delta\in \{\delta_i\mid 1\leq i \leq 4\}$ as in \rethm{surjvb4} and $\overline{\overline{\delta}}\colon UVB_4\to S_4$ is the induced homomorphism in the quotient of $VB_4$ by adding the forbidden relations \textit{(\ref{forb})} and \textit{(\ref{forb2})} of \redef{uvbn}.
\end{enumerate}
\end{thm}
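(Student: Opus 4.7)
The plan is to mimic exactly the argument used for \rethm{surjwb4}, exploiting the fact that $UVB_4$ is a further quotient of $WB_4$, so all the hard work has already been done. Given any homomorphism $\mu\colon UVB_4\to S_4$, composition with the canonical projection $p\colon VB_4\twoheadrightarrow UVB_4$ yields a homomorphism $\mu\circ p\colon VB_4\to S_4$. Since $UVB_4$ is by definition the quotient of $VB_4$ by both families of forbidden relations \textit{(\ref{forb})} and \textit{(\ref{forb2})}, this lift factors in particular through the intermediate quotient $WB_4$. Applying \rethm{surjwb4}, $\mu\circ p$ is therefore either abelian or, up to conjugation, one of $\delta_1,\delta_2,\delta_3,\delta_4$ from \rethm{surjvb4}.

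The main (and only) remaining step is to verify, for each $i\in\{1,2,3,4\}$, that $\delta_i$ also respects relation \textit{(\ref{forb2})}, namely the identities $v_{j+1}\sigma_j\sigma_{j+1}=\sigma_j\sigma_{j+1}v_j$ for $j=1,2$. This yields eight small computations in $S_4$. For example, for $\delta_1$ and $j=1$ one checks $\delta_1(v_2\sigma_1\sigma_2)=\tau_2\tau_1\tau_2$ and $\delta_1(\sigma_1\sigma_2v_1)=\tau_1\tau_2\tau_1=\tau_2\tau_1\tau_2$; analogous direct evaluations handle the remaining seven cases. Each works out, so all four $\delta_i$ induce well-defined homomorphisms $\overline{\overline{\delta_i}}\colon UVB_4\to S_4$.

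Finally, to get the non-conjugacy part of the classification (which underpins the statement being up to conjugation with a finite list), I would invoke \reprop{homconj}: two homomorphisms on $UVB_4$ are conjugate if and only if their lifts via $p$ are conjugate. Combined with the pairwise non-conjugacy of $\delta_1,\delta_2,\delta_3,\delta_4$ noted after \rethm{surjvb4}, this shows $\overline{\overline{\delta_1}},\ldots,\overline{\overline{\delta_4}}$ are pairwise non-conjugate as well. Altogether, every $\mu\colon UVB_4\to S_4$ is, up to conjugation, abelian or one of these four explicit maps.

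I do not anticipate any serious obstacle: the structural part reduces immediately to \rethm{surjwb4}, and the only concrete work is the routine finite verification of relation \textit{(\ref{forb2})} on the four surviving candidates, which is entirely parallel to the check of relation \textit{(\ref{forb})} already performed in the proof of \rethm{surjwb3}.
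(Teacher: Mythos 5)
Your proposal is correct and follows essentially the same route as the paper, whose proof of this theorem is simply the observation (made explicit for the $n=3$ case) that any $\mu\colon UVB_4\to S_4$ lifts to a homomorphism already classified in \rethm{surjvb4}, after which one checks which of the $\delta_i$ respect the forbidden relations; your routing through $WB_4$ and \rethm{surjwb4} rather than directly through $VB_4$ is an inessential shortcut. Your sample computation for $\delta_1$ and the appeal to \reprop{homconj} for pairwise non-conjugacy match what the paper does.
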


\begin{proof}
The proof is completely similar to the one given for \rethm{surjvb3}.
\end{proof}

\begin{rem}
It follows from \reprop{homconj} and \rethm{surjvb4} that the homomorphisms $\{\overline{\delta_i}\mid 1\leq i \leq 4\}$ of \rethm{surjwb4}(b) are pairwise non conjugate. 
Similarly for the homomorphisms of \rethm{surjuvb4}(b). 

\end{rem}

\subsection{Proof of \rethm{maincharvbn} }
\label{subsec:thm2}
We will use \rethm{mainchar} to prove that for $n\geq 4$ (resp.\ $n\geq 3$) the pure subgroup of the virtual braid group (resp.\ the welded braid group and the unrestricted virtual braid group) is characteristic in the virtual braid group (resp.\ the welded braid group and the unrestricted virtual braid group). 
In the previous subsections we computed, up to conjugation, all surjective homomorphisms from $VB_n$ to $S_n$ (and also for $WB_n$ and $UVB_n$), for $n=3,4$. 
In the next two lemmas we will compare some kernels of these maps.

We will use the following notation. 
Let $G$ be a group, the abelianization of $G$ will be denoted by $G^{Ab}$, i.e. $G^{Ab}=G/[G,G]$. 

\begin{lem}\label{lem:notisovbn}
Let $n\geq 3$. 
\begin{enumerate}
	\item The groups $VP_n$ and $KB_n$ are not isomorphic. 

	\item Let $n=3$ and let $\psi\in \{ \psi_i \mid 1\leq i\leq 8\}$ as in \rethm{surjvb3}. 	
	\begin{itemize}

		\item The group $KB_3=Ker(\psi_7)$ is not isomorphic to $Ker(\psi_i)$ for $1\leq i\leq 8$ with $i\neq 7$.	
\end{itemize}

	\item Let $n=4$ and let $\delta\in \{ \delta_i \mid 1\leq i\leq 6\}$ as in \rethm{surjvb4}. 
	\begin{itemize}
		\item The group $VP_4=Ker(\delta_3)$ is not isomorphic to $Ker(\delta_i)$ for $1\leq i\leq 6$ with $i\neq 3$. 
		\item The group $KB_4=Ker(\delta_5)$ is not isomorphic to $Ker(\delta_i)$ for $1\leq i\leq 6$ with $i\neq 5$.	
\end{itemize}

\end{enumerate}
\end{lem}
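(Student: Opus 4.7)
My plan is to distinguish the kernels via two invariants: presence of torsion (as a quick filter) and the abelianization (for the remaining cases). The abelianization calculations for the small-index subgroups can be done via the Reidemeister--Schreier procedure, which is routine but voluminous; in practice the cleanest approach runs them through a computer algebra system such as GAP and records the resulting ranks and torsion subgroups.

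For part (a), the abelianization of $VP_n$ is $\mathbb{Z}^{n(n-1)}$, which reads off directly from Bardakov's presentation of $VP_n$ in \cite{B}: after abelianizing, all defining relations become trivial, leaving $n(n-1)$ free generators. For $KB_n$, the Artin-type presentation in \cite[Proposition~17]{BB} yields an abelianization of strictly smaller rank for $n\geq 3$, so the two groups cannot be isomorphic.

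For part (b), the splitting $S_3\cong\langle v_1,v_2\rangle\hookrightarrow VB_3$ coming from $\iota$ shows that $v_1v_2$ has order $3$ in $VB_3$, and this element lies in $Ker(\psi_i)$ if and only if $\psi_i(v_1)=\psi_i(v_2)$, which happens precisely for $i\in\{1,8\}$. Since $KB_3$ is torsion-free (as the Artin group given by the presentation of \cite[Proposition~17]{BB}), this immediately distinguishes $KB_3$ from $Ker(\psi_1)$ and $Ker(\psi_8)$. For the remaining indices $i\in\{2,3,4,5,6\}$, I would apply Reidemeister--Schreier to the index-$6$ subgroup $Ker(\psi_i)$ of $VB_3$ to obtain a finite presentation, then compute its abelianization and compare with $KB_3^{Ab}$; in each case the ranks or torsion subgroups differ.

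Part (c) proceeds analogously. The subgroup $\langle v_1,v_2,v_3\rangle\cong S_4$ of $VB_4$ contributes torsion to every $Ker(\delta_i)$ in which $\delta_i(v_j)=\delta_i(v_k)$ for some $j\neq k$, namely for $i\in\{1,2,6\}$, ruling out isomorphism with the torsion-free groups $VP_4$ and $KB_4$. The three remaining comparisons (between $VP_4$, $KB_4$, and $Ker(\delta_4)$) are then handled by computing abelianizations via Reidemeister--Schreier on the index-$24$ subgroups of $VB_4$ and comparing ranks. The principal obstacle throughout is the volume of these latter computations, for which use of a computer algebra system is essentially unavoidable.
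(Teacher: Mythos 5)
Your proposal is essentially the paper's own argument: the authors also feed each kernel to GAP, compute its abelianization, and observe that the resulting invariants are pairwise distinct in every case that matters (for part (a) they simply cite \cite[Proposition~21]{BB}, but your direct rank comparison $ (VP_n)^{Ab}\cong\Z^{n(n-1)}$ versus the strictly smaller rank of $(KB_n)^{Ab}$ is a legitimate substitute). The one point to be careful about is your torsion ``filter'': asserting that $KB_3$, $VP_4$ and $KB_4$ are torsion-free is not something you can read off from the cited presentations --- torsion-freeness of general Artin groups is a well-known open problem, and for $VP_4$ it would require a separate reference --- so as stated that step is not justified. Fortunately it is also redundant: the abelianizations you propose to compute already separate those cases (e.g.\ $(Ker(\psi_1))^{Ab}$ and $(Ker(\psi_8))^{Ab}$ contain $3$-torsion while $(KB_3)^{Ab}\cong\Z^2$ is free, and similarly the kernels of $\delta_1,\delta_2,\delta_6$ have $2$-torsion in their abelianizations), so the proof goes through once you drop or properly source the torsion-freeness claims.
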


\begin{proof}
\begin{enumerate}
	\item This item is the same as \cite[Proposition~21]{BB}.

\item Let $n=3$ and let $\psi\in \{ \psi_i \mid 1\leq i\leq 8\}$ as in \rethm{surjvb3}. 	
We used the GAP System \cite{GAP} to compute the abelianization of the groups involved. We elucidate the routine used in the computations for the case $n=3$:
\begin{lstlisting}[language=GAP]
 f4:=FreeGroup("x","y","a","b");;
 AssignGeneratorVariables(f4);;
 r:=ParseRelators([x,y,a,b],"xyx=yxy,aba=bab,a^2=1,b^2=1,bayab=x");;
 g:= f4/r; # g is the virtual braid group on 3 strings
 p1:=GroupHomomorphismByImages(g, SymmetricGroup(3), [g.1,g.2, g.3, g.4], 
[(2,3), (2,3), (1,2), (1,2)]); AbelianInvariants(Kernel(p1)); 
[ f1, f2, f3, f4 ] -> [ (2,3), (2,3), (1,2), (1,2) ]
[ 0, 0, 0, 0, 3, 3, 3 ]
 p2:=GroupHomomorphismByImages(g, SymmetricGroup(3), [g.1,g.2, g.3, g.4], 
[(1,2), (2,3), (1,2), (2,3)]); AbelianInvariants(Kernel(p2));
[ f1, f2, f3, f4 ] -> [ (1,2), (2,3), (1,2), (2,3) ]
[ 0, 0, 0, 0, 0, 0 ]
 p3:=GroupHomomorphismByImages(g, SymmetricGroup(3), [g.1,g.2, g.3, g.4], 
[(2,3), (1,2)*(2,3)*(1,2), (1,2), (2,3)]); AbelianInvariants(Kernel(p3));
[ f1, f2, f3, f4 ] -> [ (2,3), (1,3), (1,2), (2,3) ]
[ 0, 0, 0, 0, 0, 0 ]
 p4:=GroupHomomorphismByImages(g, SymmetricGroup(3), [g.1,g.2, g.3, g.4], 
[(1,2)*(2,3)*(1,2), (1,2), (1,2), (2,3)]); AbelianInvariants(Kernel(p4));
[ f1, f2, f3, f4 ] -> [ (1,3), (1,2), (1,2), (2,3) ]
[ 0, 0, 0, 0, 0, 0 ]
 p5:=GroupHomomorphismByImages(g, SymmetricGroup(3), [g.1,g.2, g.3, g.4], 
[(1,2)*(2,3), (1,2)*(2,3), (1,2), (2,3)]); AbelianInvariants(Kernel(p5));
[ f1, f2, f3, f4 ] -> [ (1,3,2), (1,3,2), (1,2), (2,3) ]
[ 0, 0, 2, 2, 2, 2 ]
 p6:=GroupHomomorphismByImages(g, SymmetricGroup(3), [g.1,g.2, g.3, g.4], 
[(2,3)*(1,2), (2,3)*(1,2), (1,2), (2,3)]); AbelianInvariants(Kernel(p6));
[ f1, f2, f3, f4 ] -> [ (1,2,3), (1,2,3), (1,2), (2,3) ]
[ 0, 0, 2, 2, 2, 2 ]
 p7:=GroupHomomorphismByImages(g, SymmetricGroup(3), [g.1,g.2, g.3, g.4], 
[(), (), (1,2), (2,3)]); AbelianInvariants(Kernel(p7));
[ f1, f2, f3, f4 ] -> [ (), (), (1,2), (2,3) ]
[ 0, 0 ]
 p8:=GroupHomomorphismByImages(g, SymmetricGroup(3), [g.1,g.2, g.3, g.4], 
[(1,2)*(2,3), (1,2)*(2,3), (1,2), (1,2)]);AbelianInvariants(Kernel(p8));
[ f1, f2, f3, f4 ] -> [ (1,3,2), (1,3,2), (1,2), (1,2) ]
[ 0, 0, 0, 0, 3 ]
\end{lstlisting}
Summarizing, we get
\begin{multicols}{2}
\begin{itemize}
	\item $(Ker(\psi_1))^{Ab}\cong \Z^4\oplus (\Z_3)^3$
	\item $(Ker(\psi_2))^{Ab}=(VP_3)^{Ab}\cong \Z^6$
	\item $(Ker(\psi_3))^{Ab}\cong \Z^6$
	\item $(Ker(\psi_4))^{Ab}\cong \Z^6$
	\item $(Ker(\psi_5))^{Ab}\cong \Z^2\oplus (\Z_2)^4$
	\item $(Ker(\psi_6))^{Ab}\cong \Z^2\oplus (\Z_2)^4$
	\item $(Ker(\psi_7))^{Ab}=(KB_3)^{Ab}\cong \Z^2$
	\item $(Ker(\psi_8))^{Ab}\cong \Z^4\oplus  \Z_3$
\end{itemize}
\end{multicols}
From this we conclude that the group $KB_3=Ker(\psi_7)$ is not isomorphic to $Ker(\psi_i)$ for $1\leq i\leq 8$ with $i\neq 7$.

	\item Let $n=4$ and let $\delta\in \{ \delta_i \mid 1\leq i\leq 6\}$ as in \rethm{surjvb4}. We use the same idea of the previous item. 
	From the computations using GAP we get
\begin{multicols}{2}
\begin{itemize}
	\item $(Ker(\delta_1))^{Ab}\cong \Z^3\oplus (\Z_2)^2$
	\item $(Ker(\delta_2))^{Ab}\cong \Z^6\oplus (\Z_2)^8$
	\item $(Ker(\delta_3))^{Ab}=(VP_4)^{Ab}\cong \Z^{12}$
	\item $(Ker(\delta_4))^{Ab}\cong \Z^6\oplus (\Z_2)^2$
	\item $(Ker(\delta_5))^{Ab}=(KB_4)^{Ab}\cong  \Z$
	\item $(Ker(\delta_6))^{Ab}\cong \Z\oplus (\Z_2)^2$
\end{itemize}
\end{multicols}
	From these computations we conclude that the group $VP_4=Ker(\delta_3)$ is not isomorphic to $Ker(\delta_i)$ for $1\leq i\leq 6$ with $i\neq 3$ and also that the group $KB_4=Ker(\delta_5)$ is not isomorphic to $Ker(\delta_i)$ for $1\leq i\leq 6$ with $i\neq 5$.	
\end{enumerate}
\end{proof}

In the next result we consider the cases of welded and unrestricted virtual braid groups with few strings.

\begin{lem}\label{lem:notisowbn}
\begin{enumerate}
	\item Let $n=3$ 	
	\begin{itemize}
		\item Let $\overline{\psi_i}\colon WB_3\to S_3$ as in \rethm{surjwb3}, where $\{ \psi_i \mid 1\leq i\leq 5\}$ are the homomorphisms given in \rethm{surjvb3}. 
			 The pure welded braid subgroup $WP_3=Ker(\overline{\psi_2})$ is not isomorphic to $Ker(\overline{\psi_i})$, for any $1\leq i\leq 5$ with $i\neq 2$.

 		 \item Let $\overline{\overline{\psi_i}}\colon UVB_3\to S_3$ as in \rethm{surjuvb3}, where $\{ \psi_i \mid 1\leq i\leq 4\}$ are the homomorphisms given in \rethm{surjvb3}. 
			 The pure unrestricted virtual braid subgroup $UVP_3=Ker(\overline{\overline{\psi_2}})$ is not isomorphic to $Ker(\overline{\overline{\psi_i}})$, for any $1\leq i\leq 4$ with $i\neq 2$. 

\end{itemize}

		\item Let $n=4$. 
	\begin{itemize}
		 \item Let $\overline{\delta_i}\colon WB_4\to S_4$ as in \rethm{surjwb4}, where $\{ \delta_i \mid 1\leq i\leq 4\}$ are the homomorphisms given in \rethm{surjvb4}. 
			 The pure welded braid subgroup $WP_4=Ker(\overline{\delta_3})$ is not isomorphic to $Ker(\overline{\delta_i})$, for any $1\leq i\leq 4$ with $i\neq 3$. 
			 
 		 \item Let $\overline{\overline{\delta_i}}\colon UVB_4\to S_4$ as in \rethm{surjuvb4}, where $\{ \delta_i \mid 1\leq i\leq 4\}$ are the homomorphisms given in \rethm{surjvb4}. 
			 The pure unrestricted virtual braid subgroup $UVP_4=Ker(\overline{\overline{\delta_3}})$ is not isomorphic to $Ker(\overline{\overline{\delta_i}})$, for any $1\leq i\leq 4$ with $i\neq 3$. 
	\end{itemize}

\end{enumerate}
\end{lem}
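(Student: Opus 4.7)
The plan is to carry out the same GAP-based strategy used in the proof of \relem{notisovbn}, applied this time to the quotient groups $WB_n$ and $UVB_n$ rather than to $VB_n$. Recall that $WB_n$ (resp.\ $UVB_n$) is obtained from $VB_n$ by imposing the forbidden relations \textit{(\ref{forb})} (resp.\ \textit{(\ref{forb})} and \textit{(\ref{forb2})}) of \redef{uvbn}. So, starting from the GAP presentation of $VB_n$ used earlier in \relem{notisovbn}, one simply adds these extra relators in order to obtain finite presentations of $WB_n$ and $UVB_n$ for $n=3,4$. The homomorphisms $\overline{\psi_i}$ and $\overline{\overline{\psi_i}}$ (resp.\ $\overline{\delta_i}$ and $\overline{\overline{\delta_i}}$) are defined exactly by the same images on the generators that were used to define $\psi_i$ and $\delta_i$ in the previous subsections, and these images still satisfy the (fewer) relations after passing to the quotient.

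The key step is to compute the abelianization of $\mathrm{Ker}(\overline{\psi_i})$ for $1\leq i\leq 5$ (and of $\mathrm{Ker}(\overline{\overline{\psi_i}})$ for $1\leq i\leq 4$) when $n=3$, and of $\mathrm{Ker}(\overline{\delta_i})$ and $\mathrm{Ker}(\overline{\overline{\delta_i}})$ for $1\leq i\leq 4$ when $n=4$, via the \texttt{AbelianInvariants(Kernel($\cdot$))} command, exactly as illustrated in the code block of \relem{notisovbn}. Since $\mathrm{Ker}(\overline{\psi_2})=WP_n$ and $\mathrm{Ker}(\overline{\overline{\psi_2}})=UVP_n$ (and similarly in the $n=4$ case with $\overline{\delta_3}$, $\overline{\overline{\delta_3}}$), it suffices to exhibit, in each family, that the abelianization of the kernel corresponding to the pure index differs from every other one in the list. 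Once the abelianizations are seen to be pairwise non-isomorphic as abstract abelian groups, the underlying kernels are automatically pairwise non-isomorphic as groups, finishing the proof.

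The main obstacle will be the possibility that two of these kernels might share the same abelian invariants, in which case the abelianization would not suffice to distinguish them and one would have to resort to a finer invariant (for example, looking at the abelianization of the commutator subgroup, or at the number of homomorphisms to a small finite group). I do not expect this to happen here, because it already did not happen in the analogous analysis for $VB_n$ carried out in \relem{notisovbn}, and imposing the forbidden relations only cuts down the $\sigma_i$-parts of the kernels further, which tends to make the torsion and rank invariants more (not less) spread out across the list. If any coincidence of abelian invariants nevertheless shows up, I would fall back on the refined invariants just mentioned; since there are only finitely many pairs to compare, this is a finite check in each of the four cases.
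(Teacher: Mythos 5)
Your proposal matches the paper's proof: the authors simply run the same GAP computation of \texttt{AbelianInvariants(Kernel($\cdot$))} on the quotient presentations and list the results, from which the pure kernel's abelianization ($\Z^6$ for $n=3$, $\Z^{12}$ for $n=4$) is seen to differ from every other abelianization in each of the four lists. One small correction to your confidence argument: coincidences of abelian invariants \emph{do} occur, both in these lists (e.g.\ $(Ker(\overline{\psi_3}))^{Ab}\cong (Ker(\overline{\psi_4}))^{Ab}\cong \Z^4$) and already in the $VB_3$ analysis of \relem{notisovbn} (where $\Z^6$ appears three times --- which is exactly why $VP_3$ fails to be characteristic in $VB_3$), but this is harmless here because, as you correctly observe at the outset, only the pure kernel needs to be distinguished from the others.
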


\begin{proof}
The proof of this result is similar to the previous one in which we use GAP \cite{GAP} to compute the abelizanization of the kernel of each homomorphism. 
We just list below the abelianizations of the groups involved from which we conclude this result.

\begin{enumerate}
	\item Let $n=3$
	\begin{multicols}{2}
\begin{itemize}
	\item $(Ker(\overline{\psi_1}))^{Ab}\cong \Z^2\oplus (\Z_3)^3$
	\item $(Ker(\overline{\psi_2}))^{Ab}\cong \Z^6$
	\item $(Ker(\overline{\psi_3}))^{Ab}\cong \Z^4$
	\item $(Ker(\overline{\psi_4}))^{Ab}\cong \Z^4$
	\item $(Ker(\overline{\psi_5}))^{Ab}\cong \Z\oplus (\Z_3)^5$
	\item $(Ker(\overline{\overline{\psi_1}}))^{Ab}\cong \Z^2\oplus (\Z_3)^2$
	\item $(Ker(\overline{\overline{\psi_2}}))^{Ab}\cong \Z^6$
	\item $(Ker(\overline{\overline{\psi_3}}))^{Ab}\cong \Z^2\oplus \Z_3$
	\item $(Ker(\overline{\overline{\psi_4}}))^{Ab}\cong \Z^2\oplus \Z_3$

\end{itemize}
\end{multicols}
	
	\item Let $n=4$
	\begin{multicols}{2}
\begin{itemize}
	\item $(Ker(\overline{\delta_1}))^{Ab}\cong \Z^3\oplus (\Z_2)^2$
	\item $(Ker(\overline{\delta_2}))^{Ab}\cong \Z^3\oplus (\Z_2)^8$
	\item $(Ker(\overline{\delta_3}))^{Ab}\cong \Z^{12}$
	\item $(Ker(\overline{\delta_4}))^{Ab}\cong \Z^3\oplus (\Z_2)^3$
	\item $(Ker(\overline{\overline{\delta_1}}))^{Ab}\cong \Z^3\oplus (\Z_2)^2$
	\item $(Ker(\overline{\overline{\delta_2}}))^{Ab}\cong \Z^3\oplus (\Z_2)^6$
	\item $(Ker(\overline{\overline{\delta_3}}))^{Ab}\cong \Z^{12}$
	\item $(Ker(\overline{\overline{\delta_4}}))^{Ab}\cong \Z^3\oplus (\Z_2)^3$

\end{itemize}
\end{multicols}
	
\end{enumerate}
\end{proof}

\begin{rem}\label{rem:aut}
Given a group homomorphism $\xi\colon G \to H$ and $\gamma\in \aut{H}$, then $\ker{\xi} = \ker{\gamma\circ \xi}$.
\end{rem}

With the above information we can now determine eaxctly when the virtual pure braid group is characteristic in the virtual braid group.

\begin{proof}[Proof of \rethm{maincharvbn}]
The case $n=2$ follows from \rerem{notchar}. \\
Let $n\geq 3$. 
We first show that $VP_3$ is not characteristic in $VB_3$. Let $\alpha\colon VB_3 \to VB_3$ be the homomorphism determined by 
$$\alpha(v_1) =v_1, \alpha(v_2) = v_2, \alpha(\sigma_1) = v_1 v_2 \sigma_1 v_2 v_1 = \sigma_2 \mbox{ and } 
\alpha( \sigma_2) = v_1v_2 \sigma_2 v_2v_1. $$
We leave it to the reader to check that this $\alpha$ preserves the relations of the presentation~\eqref{eq:presvb3} of $VB_3$ and so indeed determines a homomorphism. Moreover, as $v_1v_2$ is an element of order 3, we have that 
\[ \alpha^3 (v_i)= v_i \mbox{ and }  \alpha^3 (\sigma_i) =
v_1v_2v_1v_2v_1v_2 \sigma_i v_2v_1v_2v_1v_2v_1 = \sigma_i, \; i=1,2,\]
hence $\alpha^3$ is the identity on $VB_3$ from which we conclude that $\alpha$ is an automorphism of $VB_3$. Note that $\pi_P(v_1\sigma_1)=1$ so $v_1\sigma_1\in VP_3$, but 
$\pi_P(\alpha(v_1\sigma_1)) = \pi_P( v_2 \sigma_1 v_2 v_1) =
\tau_2 \tau_1 \tau_2 \tau_1 = \tau_1 \tau_2 \neq 1$ showing that 
$\alpha (VP_3) \neq VP_3$ and so $VP_3$ is not a characteristic subgroup of $VB_3$.

We will apply \rethm{mainchar} to show that in the other cases we do obtain characteristic subgroups.
Recall that $\out{S_n}$ is trivial for $n\neq 6$ and that $\out{S_6}$ is a cyclic group of order 2. 
In \cite{BP} the authors used the notation $\nu_6$ for the automorphism such that its class generates $\out{S_6}$, see \cite[Introduction]{BP} for an explicit definition of this outer automorphism.
\begin{enumerate}
	\item Let $\Sigma_n$ be the set of all surjective homomorphisms from $VB_n$ onto $S_n$, let ${\cal T}_n=\Sigma_n/\!\sim_{c}$ be the set of equivalence classes of $\Sigma_n$ by $\sim_{c}$. 
We choose the following set of representatives $\Lambda_n$ of ${\cal T}_n$:
\begin{itemize}
	\item $\Lambda_3=\{\psi_i\mid 1\leq i \leq 8\}$, from \rethm{surjvb3}, 
	\item $\Lambda_4=\{\delta_i\mid 1\leq i \leq 6\}$, from \rethm{surjvb4},
	\item $\Lambda_6=\{\pi_K, \pi_P, \nu_6\circ \pi_K, \nu_6\circ \pi_P\}$, from \cite[Theorem~2.1]{BP}, and 
	\item $\Lambda_n=\{\pi_K, \pi_P\}$, for $n\geq 5$ and $n\neq 6$ from \cite[Theorem~2.1]{BP}.

\end{itemize}

If $n=6$, from \rerem{aut}, $Ker(\nu_6\circ \pi_K)=Ker(\pi_K)$ and $Ker(\nu_6\circ \pi_P)=Ker(\pi_P)$.
Then, from \relem{notisovbn} and \rethm{mainchar} we get that $VP_n$ is characteristic in $VB_n$ when $n\geq 4$ and that 
$KB_n$ is characteristic in $VB_n$ when $n\geq 3$.

	\item Let $\overline{\Sigma}_n$ be the set of all surjective homomorphisms from $WB_n$ onto $S_n$, let $\overline{{\cal T}}_n=\overline{\Sigma}_n/\!\sim_{c}$ be the set of equivalence classes of $\overline{\Sigma}_n$ by $\sim_{c}$. 
We choose the following set of representatives $\overline{\Lambda}_n$ of $\overline{{\cal T}}_n$:
\begin{itemize}
	\item $\overline{\Lambda}_3=\{\overline{\psi_i}\mid 1\leq i \leq 5\}$, from \rethm{surjwb3}, 
	\item $\overline{\Lambda}_4=\{\overline{\delta_i}\mid 1\leq i \leq 4\}$, from \rethm{surjwb4},
	\item $\overline{\Lambda}_6=\{\overline{\pi_P}, \nu_6\circ \overline{\pi_P}\}$, from \cite[Remark~2.8]{M}, and 
	\item $\overline{\Lambda}_n=\{\overline{\pi_P}\}$, for $n\geq 5$ and $n\neq 6$ from \cite[Remark~2.8]{M}.
\end{itemize}

If  $n=6$, from \rerem{aut},  $Ker(\nu_6\circ \overline{\pi_P})=Ker(\overline{\pi_P})$.
Then, from \relem{notisowbn} and \rethm{mainchar} we get the result of the second item. 
	
	\item The proof of this item is similar to the last one. 
	For the proof we use \rethm{surjuvb3}, \rethm{surjuvb4}, \cite[Theorem~1]{M}, \relem{notisowbn} and \rethm{mainchar}.
\end{enumerate}
\end{proof}

\section{Virtual braid groups and the R$_{\infty}$-property}\label{sec:vbn}

In this section we prove \rethm{mainresult}. 
It will be solved case by case, in three subsections, and using slightly different approaches.
We note that from the presentation of the virtual braid group it follows that
$$
VB_2=WB_2=UVB_2=\setang{\sigma_1, v_1}{v_1^2=1} \cong \Z\ast \Z_2. 
$$
Hence, $VB_2=WB_2=UVB_2$ has the R$_{\infty}$-property, see \cite{GSW}. 
We start by recalling a result that we will use repeatedly to prove \rethm{mainresult}.

\begin{lem}[{\cite[Lemma~6]{MS}}]\label{lem:ses}
Consider an exact sequence of groups
$$
1\to K\to G\to Q\to 1
$$ 
where $K$ is a characteristic subgroup of $G$. Then,
\begin{enumerate}
	\item If $Q$ has the R$_{\infty}$-property, then so does $G$.  
	\item If $Q$ is finite and $K$  has the R$_{\infty}$-property, then $G$ so does.
\end{enumerate}
  
\end{lem}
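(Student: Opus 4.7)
The plan is to treat the two items separately, leveraging the fact that $K$ being characteristic guarantees that every $\varphi\in\aut{G}$ restricts to an automorphism $\varphi|_K$ of $K$ and descends to an automorphism $\bar\varphi$ of $Q$, so both Reidemeister numbers $R(\varphi|_K)$ and $R(\bar\varphi)$ are well defined. In both parts the argument proceeds by comparing the $\varphi$-twisted conjugacy classes in $G$ with those of $\bar\varphi$ in $Q$ or those of $\varphi|_K$ in $K$.

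For item (a), my plan is to check that the projection $p\colon G\to Q$ sends $\varphi$-twisted conjugacy classes into $\bar\varphi$-twisted conjugacy classes and to observe that the induced map between the sets of classes is surjective. Indeed, from $x=zy\varphi(z)^{-1}$ in $G$ one obtains $\bar x=\bar z\bar y\bar\varphi(\bar z)^{-1}$ in $Q$, so the map is well defined; surjectivity follows from surjectivity of $p$. Hence $R(\varphi)\geq R(\bar\varphi)=\infty$, and since $\varphi\in\aut{G}$ was arbitrary, $G$ has the R$_\infty$-property.

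For item (b), I would argue by contraposition: assume $R(\varphi)<\infty$ and deduce $R(\varphi|_K)<\infty$ using the finiteness of $Q$, contradicting the hypothesis. Pick representatives $x_1,\dots,x_N$ of the $\varphi$-twisted conjugacy classes of $G$, chosen inside $K$ whenever the corresponding class meets $K$, and fix coset representatives $h_1,\dots,h_m$ of $K$ in $G$, where $m=|Q|$. For any $k\in K$, writing $k=zx_i\varphi(z)^{-1}$ and decomposing $z=k'h_j$ with $k'\in K$, one obtains
$$k=k'\bigl(h_j x_i\varphi(h_j)^{-1}\bigr)\varphi(k')^{-1};$$
since $k,k',\varphi(k')\in K$, the element $y_{i,j}:=h_j x_i\varphi(h_j)^{-1}$ must then lie in $K$ as well. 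Thus $k$ is $\varphi|_K$-twisted conjugate, inside $K$, to some $y_{i,j}$, and since there are at most $Nm$ such elements we get $R(\varphi|_K)\leq Nm<\infty$, the desired contradiction.

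The only delicate point is the bookkeeping in item (b); the key observation that keeps the argument clean is the automatic fact that $h_j x_i\varphi(h_j)^{-1}\in K$ once a class representative has been forced into $K$ and the decomposition $z=k'h_j$ is used. Beyond this, I do not anticipate further difficulties, as both items go through uniformly regardless of the specific nature of $G$, $K$ or $Q$.
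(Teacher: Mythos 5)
Your proof is correct. Note that the paper does not prove this lemma at all --- it is quoted verbatim from \cite[Lemma~6]{MS} --- so there is no in-paper argument to compare against; your two arguments (surjectivity of the induced map on Reidemeister classes for item (a), and the coset-decomposition bound $R(\varphi|_K)\leq N\,|Q|$ for item (b), including the observation that $h_j x_i\varphi(h_j)^{-1}=(k')^{-1}k\,\varphi(k')\in K$) are exactly the standard ones underlying the cited result, and both are complete.
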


In the sequel we will also make use of some facts about crystallographic groups. A $n$-dimensional crystallographic group $\Gamma$ is a group which fits is a short exact sequence 
\[ 1 \to \Z^n \to \Gamma \to F \to 1\]
where $F$ is a finite group and $\Z^n$ is maximal abelian in $\Gamma$. Such a short exact sequence induces a representation 
$\phi\colon F \to GL_n(\Z)$ which is called the holonomy representation. In fact, requiring that $\Z^n$ is maximal abelian in $\Gamma$ is equivalent to asking that $\phi$ is a faithful representation. 
For such a $n$-dimensional crystallographic group $\Gamma$ there exists an embedding $\rho\colon \Gamma \to \aff{\R^n}=\R^n \rtimes GL_n(\R)$ with  $\rho(\Gamma)\cap \R^n = \rho(\Z^n) = \Z^n$. After identifying $\Gamma$ with its image $\rho(\Gamma)$ in $\aff{\R^n}$, the second Bieberbach theorem implies that any automorphism $\psi \in \aut{\Gamma}$ can be realised as an affine conjugation, i.e.\ $\exists (d,D) \in \aff{\R^n}$ such that 
$\psi(\gamma) = (d,D) \gamma (d,D)^{-1}$ for all $\gamma \in \Gamma$. We refer the reader to \cite{De} for more details.
\subsection{The case of 3 strings}

We use the presentation of the virtual braid group with 3 strings given in \req{presvb3}.

\begin{lem}\label{lem:vb3cryst}
    The normal closure of the coset of the element $v_1v_2$ in $VB_3/[VP_3,\, VP_3]$ (resp.\ in $VB_3/[KB_3,\, KB_3]$) is a characteristic subgroup of $VB_3/[VP_3,\, VP_3]$ (resp.\ of $VB_3/[KB_3,\, KB_3]$). 
\end{lem}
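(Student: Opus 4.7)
The plan is to exhibit the normal closure $N$ of the class $\xi$ of $v_1v_2$ in $G:=VB_3/[H,H]$ (where $H$ is $VP_3$ or $KB_3$) as an intrinsically defined subgroup --- namely, the subgroup generated by all elements of order~$3$ --- and then conclude that $N$ is characteristic because such a subgroup is preserved by every automorphism. The section $\tau_i\mapsto v_i$ of \resubsec{vbn} descends to a splitting $G\cong M\rtimes S_3$ with $M:=H^{Ab}$, under which $\xi=(0,\rho)$ for $\rho=\tau_1\tau_2\in A_3$ and $\xi^3=1$. A direct computation using the conjugation formula $(a,s)(0,\rho)(a,s)^{-1}=\bigl((1-s\rho s^{-1})a,\, s\rho s^{-1}\bigr)$ yields $N=(\rho-1)M\rtimes A_3$, while an element $(t,s)$ of $G$ has order exactly~$3$ precisely when $s\in A_3\setminus\{1\}$ and $(1+s+s^2)t=0$.

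The core step is the module identity $\ker{1+\rho+\rho^2}=(\rho-1)M$ in $M$, which forces the order-$3$ elements of $G$ to be exactly those lying in $N$. For $H=VP_3$, $M\cong\Z^6$ is, as a $\Z[\Z_3]$-module, free of rank~$2$: the restricted permutation action of $A_3$ on the six ordered pairs $(i,j)$ with $i\neq j$ in $\{1,2,3\}$ splits into two regular orbits. The identity $(1+x+x^2)v=\varepsilon(v)(1+x+x^2)$ in $\Z[x]/(x^3-1)$, where $\varepsilon$ is the augmentation, then gives $\ker{1+x+x^2}=(x-1)\Z[\Z_3]$, whence the claim. For $H=KB_3$, abelianising the Artin relation $\sigma_1\sigma_2\sigma_1=\sigma_2\sigma_1\sigma_2$ inside $KB_3$ gives $\overline{\sigma_1}=\overline{\sigma_2}$ in $M\cong\Z^2$; this class is non-zero because it maps onto a generator of the $\Z$-summand of $VB_3^{Ab}\cong\Z\oplus\Z_2$, and the mixed relation $v_1v_2\sigma_1v_2v_1=\sigma_2$ makes it $\rho$-fixed. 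Since every element of order~$3$ in $GL_2(\Z)$ has characteristic polynomial $x^2+x+1$ and so admits no non-zero fixed vector, $\rho$ must act as the identity on $M$; therefore $1+\rho+\rho^2=3\cdot\mathrm{id}$ has trivial kernel on the torsion-free $\Z^2$, the only order-$3$ elements of $G$ are $\xi^{\pm1}$, and $N=\ang{\xi}=A_3$.

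In either case $N$ coincides with the subgroup of $G$ generated by its elements of order~$3$, which is clearly characteristic. The most delicate step is the $KB_3$ computation, where one must pin down the $A_3$-action on $KB_3^{Ab}$; the shortcut via the Artin relation and the torsion classification in $GL_2(\Z)$ keeps this short, but a verification via \texttt{GAP} (as used elsewhere in the paper) could alternatively be invoked.
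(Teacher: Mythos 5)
Your argument is correct, and it rests on the same structural idea as the paper's proof: in both quotients the normal closure of the coset of $v_1v_2$ is recognised as the subgroup generated by \emph{all} elements of order $3$, which is automatically characteristic. The difference lies in how that identification is established. For $VB_3/[VP_3,VP_3]$ the paper simply quotes \cite[Corollary~3.8]{CO} (every element of order $3$ is conjugate to $v_1v_2$), whereas you reprove the needed fact from scratch: the restriction of $VP_3^{Ab}\cong\Z^6$ to $A_3$ is a free $\Z[A_3]$-module of rank $2$ (two regular orbits on ordered pairs), so the kernel of $1+\rho+\rho^2$ is exactly $(\rho-1)VP_3^{Ab}$, and the order-$3$ elements are precisely the generators of $N=(\rho-1)M\rtimes A_3$; this makes the first half self-contained. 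For $VB_3/[KB_3,KB_3]$ the paper computes an explicit presentation of the quotient (via Propositions~17 and~19 of \cite{BB} and the rewriting method of \cite[Chapter~10]{J}) and exhibits $\ang{v_1v_2}$ as the unique subgroup of order $3$; your shortcut --- the abelianised Artin relation gives a non-zero $\rho$-fixed vector in $KB_3^{Ab}\cong\Z^2$, while a non-identity order-$3$ element of $GL_2(\Z)$ has characteristic polynomial $x^2+x+1$ and so fixes no non-zero vector, forcing $\rho$ to act trivially --- reaches the same conclusion with noticeably less computation, and all of its steps (the conjugation formula, the description of the normal closure, the order-$3$ criterion on a torsion-free semidirect factor, the non-vanishing of $\overline{\sigma_1}$ via $VB_3^{Ab}\cong\Z\oplus\Z_2$) check out. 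The one external input you should still cite explicitly is $KB_3^{Ab}\cong\Z^2$ (this is \cite[Proposition~19]{BB}, also confirmed by the GAP computation in \relem{notisovbn}): both the torsion-freeness and the rank being exactly $2$ are genuinely used in your argument.
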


\begin{proof}
We recall from \cite[Theorem~3.3 and equation~(8)]{CO} that there is a decomposition $VB_3/[VP_3,\, VP_3]\cong VP_3/[VP_3,\, VP_3]\rtimes S_3$ where $VP_3/[VP_3,\, VP_3]$ is the free abelian group of rank 6 generated by $\{ \lambda_{i,j} \mid 1\leq i\neq j\leq 3 \}$, the symmetric group is generated by two transpositions $v_1, v_2$, and such that the action  is given by permutation of indices. 
So, we may write a presentation for $VB_3/[VP_3,\, VP_3]$ with generators $v_1$, $v_2$ and $\lambda_{i,j}$ for $ 1\leq i\neq j\leq 3$ and defining relations given by
\begin{itemize}
	\item $v_1v_2v_1 = v_2v_1v_2$, $v_1^2=1$, $v_2^2=1$,
	\item $[\lambda_{i,j},\, \lambda_{k,l}]=1$ for $1\leq i\neq j\leq 3$ and $1\leq k\neq l\leq 3$,
	\item $v_k\cdot \lambda_{i,j}\cdot v_k = \lambda_{v_k(i),v_k(j)}$, for $1\leq i\neq j\leq 3$ and $k=1,2$.
\end{itemize}
Let $\gamma=v_1v_2\in VB_3/[VP_3,\, VP_3]$. Hence $\gamma$ has order 3 in $VB_3/[VP_3,\, VP_3]$. 
Let $N$ be the normal closure of the element $\gamma$ in $VB_3/[VP_3,\, VP_3]$. 
Since every element in $VB_3/[VP_3,\, VP_3]$ of order 3 is conjugate to $\gamma$ (see \cite[Corollary~3.8]{CO}) we get that $N$ is a characteristic subgroup of $VB_3/[VP_3,\, VP_3]$.

Now, we prove the result for $VB_3/[KB_3,\, KB_3]$. Proposition~17 and Corollary~18 of~\cite{BB} show that $VB_3$ can be seen as a semidirect product
$VB_3=KB_3\rtimes S_3$, where $KB_3$ can be viewed as a group generated by 6 generators $x_{i,j}$ with $1 \leq i\neq j \leq3$ subject to 6 relations $x_{i,k} x_{k,j} x_{i,k} = x_{k,j} x_{k,i} x_{k,j}$ (for $\{i,j,k\}=\{1,2,3\}$) and where $S_3$ acts on the generators by permuting the indices. 
In the proof of Proposition~19 of~\cite{BB} it was shown that in the quotient  $VB_3/[KB_3,\, KB_3]$ these relations lead to an equality of cosets $x_{1,2}=x_{2,3}=x_{3,1}$ and $x_{1,3}=x_{3,2}=x_{2,1}$ (which we abusively also denote by the same symbols).
Hence, from \cite[Proposition~19]{BB}, the group $KB_3/[KB_3,\, KB_3]$ is a free abelian group of rank 2 generated by the cosets of the elements $x_{1,2}$ and $x_{1,3}$ and we obtain a split extension 
\begin{equation*}
    1\to KB_3/[KB_3,\, KB_3]\cong \Z^2 \to VB_3/[KB_3,\, KB_3] \stackrel{\overline{\pi_K}}{\longrightarrow} S_3 \to 1
\end{equation*}
where $\overline{\pi_K}$ is the homomorphism induced from $\pi_K\colon VB_3\to S_3$ (see \resubsec{vbn}). We consider now the following presentation of $S_3$, $S_3=\langle a,\, b \mid a^3=1, b^2=1, (ba)^2=1 \rangle$, where $a=v_1v_2$ and $b=v_1$. 
By using the method described in \cite[Chapter~10]{J} we find a presentation of the group $VB_3/[KB_3,\, KB_3]$ with generators $a$, $b$, $x_{1,2}$, $x_{1,3}$ and defining relations
\begin{itemize}
    \item $a^3=1$; $b^2=1$; $(ba)^2=1$;
    \item $[x_{1,2},\, x_{1,3}]=1$;
    \item $bx_{1,2}b^{-1}=x_{1,3}$; $bx_{1,3}b^{-1}=x_{1,2}$;
    \item $ax_{1,2}a^{-1}=x_{1,2}$; $ax_{1,3}a^{-1}=x_{1,3}$.
\end{itemize}
We consider now the following extension 
\begin{equation*}
    1\to KB_3/[KB_3,\, KB_3] \to \overline{\pi_K}^{-1}(\Z_3) \stackrel{\overline{\pi_K}}{\longrightarrow} \Z_3 \to 1
\end{equation*}
where $\Z_3$ is the group generated by $a=v_1v_2$. Notice that $\overline{\pi_K}^{-1}(\Z_3)$ is isomorphic to $\Z\oplus \Z \oplus \Z_3$ generated by the set $\{ x_{1,2},\, x_{1,3},\, a \}$. 
From the above we obtain the extension
\begin{equation*}
    1\to \Z\oplus \Z \oplus \Z_3 \to VB_3/[KB_3,\, KB_3] \to \Z_2 \to 1
\end{equation*}
where $\Z_2$ is the group generated by $b=v_1$. From this extension we see that the torsion subgroup of $\Z\oplus \Z \oplus \Z_3$ is the unique subgroup of order 3 in $VB_3/[KB_3,\, KB_3]$. So this subgroup, which is generated by $v_1v_2$ is a characteristic subgroup of $VB_3/[KB_3,\, KB_3]$. 
\end{proof}

\relem{vb3cryst} is useful to prove the next result.

\begin{thm}\label{thm:vb3cryst}
The quotient groups $VB_3/[VP_3,\, VP_3]$ and $VB_3/[KB_3,\, KB_3]$ have the R$_{\infty}$-property.
\end{thm}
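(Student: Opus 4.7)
The plan is to combine Lemma~\ref{lem:vb3cryst} with Lemma~\ref{lem:ses}(a): the normal closure $N$ of the coset of $v_1v_2$ is characteristic in each of $G_1 := VB_3/[VP_3,VP_3]$ and $G_2 := VB_3/[KB_3,KB_3]$, so it suffices to show that each quotient $G_i/N$ has the R$_\infty$-property.

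First I would identify $G_i/N$ in both cases. For $G_1 \cong \Z^6 \rtimes S_3$, killing the normal closure of $v_1v_2$ collapses $S_3$ onto $\Z_2 = \langle v_1 \rangle$ and forces the identifications $\lambda_{i,j} = \lambda_{\rho(i),\rho(j)}$, where $\rho = v_1v_2$ acts on the six ordered pairs with exactly two orbits of size three; this yields $\Z^2 \rtimes \Z_2$ with $v_1$ swapping the two orbit classes. For $G_2$, the presentation recorded in the proof of Lemma~\ref{lem:vb3cryst} makes $v_1v_2$ central of order $3$, so $N \cong \Z_3$ and the quotient is $\langle x_{1,2}, x_{1,3}, b \mid [x_{1,2},x_{1,3}]=1,\; b^2=1,\; bx_{1,2}b^{-1}=x_{1,3}\rangle$, i.e.\ again $\Z^2 \rtimes \Z_2$ with $b$ swapping the two generators. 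Thus in both cases the quotient is the same $2$-dimensional crystallographic group $\Gamma := \Z^2 \rtimes \Z_2$, whose holonomy generator is the swap matrix $\sigma = \left(\begin{smallmatrix} 0 & 1 \\ 1 & 0\end{smallmatrix}\right)$.

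Next I would prove that $\Gamma$ has the R$_\infty$-property. Since $\Z^2$ is the translation lattice of the crystallographic group $\Gamma$, it is characteristic, so any $\psi \in \aut{\Gamma}$ restricts to an element $M \in GL_2(\Z)$ commuting with $\sigma$; a short direct calculation gives $C_{GL_2(\Z)}(\sigma) = \{\pm I, \pm \sigma\}$. When $M \in \{I, \sigma, -\sigma\}$ the matrix $I - M$ is singular and the map $z \mapsto x + (I - M)z$ already produces infinitely many $\psi$-twisted classes inside $\Z^2$. When $M = -I$ the classes inside $\Z^2$ are finite (they are controlled by $I - M = 2I$), but a direct computation of $z\, t\, \psi(z)^{-1}$ for $z \in \Z^2$ shows that inside the coset $t\Z^2$ the twisted classes are parametrised by $\Z^2/(I + \sigma)\Z^2 = \Z^2/\Z(1,1) \cong \Z$ modulo an additional order-two involution, which is still infinite. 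Hence $R(\psi) = \infty$ for every $\psi \in \aut{\Gamma}$, and applying Lemma~\ref{lem:ses}(a) to the short exact sequences $1 \to N \to G_i \to \Gamma \to 1$ yields the theorem.

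The main obstacle is the case $M = -I$: here the restriction to the translation lattice contributes only finitely many twisted classes, so the infinitude of $R(\psi)$ must be extracted from the non-identity coset, using the second singular matrix $I + \sigma$; this is the step that genuinely exploits the crystallographic structure of $\Gamma$ produced by Lemma~\ref{lem:vb3cryst}.
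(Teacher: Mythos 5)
Your proposal is correct and follows the same reduction as the paper: both arguments rest on \relem{vb3cryst} to make the normal closure $N$ of the coset of $v_1v_2$ characteristic, identify the quotient by $N$ in each of the two cases with the $2$-dimensional crystallographic group $\Z^2\rtimes\Z_2$ with swap action (your orbit count for the six pairs $\lambda_{i,j}$ under the $3$-cycle, and your reading of the $VB_3/[KB_3,KB_3]$ presentation, both match the paper), and conclude via \relem{ses}. The one genuine divergence is the final step: the paper simply quotes \cite[Section~3]{GW2}, where this group appears as Case~5 (denoted $G_1^2$) in the list of the $17$ wallpaper groups, to get the R$_\infty$-property, whereas you prove it directly from the centralizer computation $C_{GL_2(\Z)}(\sigma)=\{\pm I,\pm\sigma\}$ together with a case analysis on $M$. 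Your computation checks out, including the delicate case $M=-I$, where the infinitude indeed has to be extracted from the coset $\Z^2\sigma$ and is governed by the singular matrix $I+\sigma$. The only point you should make explicit in the cases $M\in\{I,\sigma,-\sigma\}$ is why infinitely many twisted classes under conjugation by the lattice alone already forces $R(\psi)=\infty$: the transformations of $\Z^2$ induced by twisted conjugation by all of $\Gamma$ contain those induced by the lattice as a subgroup of index at most $2$, so passing to the full group can merge classes only in pairs and cannot reduce an infinite count to a finite one. Your self-contained linear-algebra argument buys independence from the wallpaper-group classification; the paper's citation is shorter but hides exactly the computation you carried out.
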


\begin{proof}
Let $N$ be the normal closure of the coset of $v_1v_2$ in $VB_3/[VP_3,\, VP_3]$. 
We consider the quotient $G=(VB_3/[VP_3,\, VP_3])/N$ that has a presentation given by the one of $VB_3/[VP_3,\, VP_3]$ (see the proof of \relem{vb3cryst}) adding the relation $v_1v_2=1$, which is equivalent to the relation $v_1=v_2$ since $v_1$ and $v_2$ are transpositions. 
From $v_1=v_2$ and the relations $v_k\cdot \lambda_{i,j}\cdot v_k = \lambda_{v_k(i),v_k(j)}$, for $1\leq i\neq j\leq 3$ and $k=1,2$ we conclude that $G\cong \Z^2\rtimes \Z_2$ has a presentation with generators $\lambda_{1,2}$, $\lambda_{2,1}$ and $\sigma_1$ and defining relations
\begin{multicols}{2}
\begin{itemize}
	\item $v_1^2=1$,
	\item $[\lambda_{1,2},\, \lambda_{2,1}]=1$,
	\item $v_1\lambda_{1,2}v_1 = \lambda_{2,1}$,
	\item $v_1\lambda_{2,1}v_1 = \lambda_{1,2}$.
\end{itemize}
\end{multicols}

Let $M$ be the normal closure of the coset of $v_1v_2$ in $VB_3/[KB_3,\, KB_3]$ (which is actually the group of order 3 generated by $v_1v_2$) and let $H=(VB_3/[KB_3,\, KB_3])/M$. From the proof of \relem{vb3cryst} is clear that $H$ is isomorphic to the group $G$ above in this proof.

We note that $G$ and $H$ are isomorphic to the crystallographic group of dimension 2 of Case 5 of the list of all 17 wallpaper groups given in \cite[Section~3]{GW2} (there it was denoted by $G_1^2$). 
Hence, from \cite[Section~3]{GW2}, it follows that $G$ and $H$ have the R$_{\infty}$-property. 
Therefore, from \relem{ses} and \relem{vb3cryst}, we have that $VB_3/[VP_3,\, VP_3]$  and $VB_3/[KB_3,\, KB_3]$ also have the R$_{\infty}$-property.
\end{proof}

With the last result we may prove that $VB_3$, $WB_3$ and $UVB_3$ have the R$_{\infty}$-property.

\begin{cor}\label{cor:vb3}
The virtual braid group $VB_3$, the welded braid group $WB_3$ and the unrestricted virtual braid group $UVB_3$ have the R$_{\infty}$-property.
\end{cor}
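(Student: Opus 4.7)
The plan is to reduce each case to an application of \relem{ses}(1), using the characteristic subgroups supplied by \rethm{maincharvbn} together with the R$_{\infty}$-property of the quotients established in \rethm{vb3cryst}.

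For $VB_3$: since $n=3$, \rethm{maincharvbn}(a) gives that $KB_3$ is characteristic in $VB_3$, hence so is $[KB_3,KB_3]$. Combined with the R$_{\infty}$-property of $VB_3/[KB_3,KB_3]$ from \rethm{vb3cryst}, \relem{ses}(1) yields the claim. (Note that we cannot invoke the quotient $VB_3/[VP_3,VP_3]$ directly in this step, because $VP_3$ is not characteristic in $VB_3$ for $n=3$ by \rethm{maincharvbn}(a).)

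For $UVB_3$: this has already been established in the earlier \reprop{uvbn}, which proves that $UVB_n$ has the R$_{\infty}$-property for every $n\geq 2$ by exploiting that $UVP_n$ is a characteristic subgroup isomorphic to a direct product of free groups.

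For $WB_3$: by \rethm{maincharvbn}(b), $WP_3$ is characteristic in $WB_3$, and hence so is $[WP_3,WP_3]$. By \relem{ses}(1) it then suffices to prove that $WB_3/[WP_3,WP_3]$ has the R$_{\infty}$-property. My approach is to compare this quotient with $VB_3/[VP_3,VP_3]$, which has the R$_{\infty}$-property by \rethm{vb3cryst}. From \relem{notisovbn} and \relem{notisowbn}, $(VP_3)^{Ab}\cong \Z^6\cong (WP_3)^{Ab}$, so the natural projection $p\colon VP_3\twoheadrightarrow WP_3$ induces a surjective endomorphism of $\Z^6$, which is necessarily an isomorphism since finitely generated free abelian groups are Hopfian. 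Therefore $\ker p\subseteq [VP_3,VP_3]$; as the forbidden relations (\ref{forb}) defining $WB_3$ as a quotient of $VB_3$ already lie in $VP_3$, it follows that the induced surjection $VB_3/[VP_3,VP_3]\twoheadrightarrow WB_3/[WP_3,WP_3]$ is an isomorphism, which is enough.

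The main obstacle is the verification that $\ker(VP_3\twoheadrightarrow WP_3)$ sits inside $[VP_3,VP_3]$. The coincidence of ranks $(VP_3)^{Ab}\cong(WP_3)^{Ab}\cong \Z^6$ together with Hopficity makes the argument clean; had these abelianizations differed, one would instead need to carry out a direct crystallographic computation for $WB_3/[WP_3,WP_3]$ paralleling \relem{vb3cryst} and \rethm{vb3cryst}, which is the only place where a more delicate analysis could have been forced.
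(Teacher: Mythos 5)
Your proposal is correct, and for $VB_3$ it coincides with the paper's argument: $KB_3$ is characteristic by \rethm{maincharvbn}, hence so is $[KB_3,KB_3]$, and \relem{ses} applied to the quotient $VB_3/[KB_3,KB_3]$ of \rethm{vb3cryst} gives the claim (your observation that $VP_3$ cannot be used here because it is not characteristic in $VB_3$ is exactly the point). The differences lie in the other two cases. For $UVB_3$ the paper does not appeal to \reprop{uvbn} but instead uses the isomorphism $UVB_3/[UVP_3,UVP_3]\cong VB_3/[VP_3,VP_3]$ quoted from \cite[Theorem~5.1]{CO}; your route through \reprop{uvbn} is equally valid and involves no circularity, and the paper itself remarks afterwards that the two techniques overlap for $n=3,4$. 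For $WB_3$ the paper again simply cites \cite[Theorem~5.1]{CO} for the isomorphism $WB_3/[WP_3,WP_3]\cong VB_3/[VP_3,VP_3]$, whereas you reprove this isomorphism internally: the forbidden relators lie in $VP_3$, the induced surjection $(VP_3)^{Ab}\to (WP_3)^{Ab}$ is a surjection $\Z^6\to\Z^6$ and hence an isomorphism by Hopficity, so the kernel of $VP_3\twoheadrightarrow WP_3$ is contained in $[VP_3,VP_3]$ and the induced map $VB_3/[VP_3,VP_3]\to WB_3/[WP_3,WP_3]$ is an isomorphism. This argument is correct and buys self-containedness, relying only on the abelianization data already computed in \relem{notisovbn} and \relem{notisowbn}, at the price of depending on those GAP computations; the citation of \cite{CO} is shorter and gives the isomorphism for all $n$ at once.
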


\begin{proof}
From \rethm{vb3cryst} we know that the groups $VB_3/[VP_3,\, VP_3]$ and $VB_3/[KB_3,\, KB_3]$ have  the R$_{\infty}$-property. 
From \cite[Theorem~5.1]{CO} the group $VB_3/[VP_3, VP_3]$ is isomorphic to $WB_3/[WP_3, WP_3]$ as well as to $UVB_3/[UVP_3, UVP_3]$. 
Then, by applying \relem{ses} and \rethm{maincharvbn} we get this result. 
\end{proof}

\subsection{The case of 4 strings}

Let $\Z^{12}\rtimes S_4$ be a crystallographic group such that the generators of $\Z^{12}$ are denoted by $\lambda_{i,j}$ for $1\leq i\neq j\leq 4$ and such that the action of $w\in S_4$ on $\lambda_{i,j}$ is given by $w\cdot \lambda_{i,j} = \lambda_{w^{-1}(i), w^{-1}(j)}$ for all $1\leq i\neq j\leq 4$. Let $\phi\colon S_4 \to GL_{12}(\Z)$ be the holonomy representation of $\Z^{12}\rtimes S_4$. 
From the natural homomorphism $GL_{12}(\Z) \hookrightarrow GL_{12}(\Q)$ we shall view the holonomy representation as $\phi\colon S_4\hookrightarrow GL_{12}(\Q)$. 

First, we describe the $S_4$-module structure of $\Q^{12}$ using character theory. We record in Table~\ref{tab:chars4} the character table of $S_4$:
\begin{table}[htb] 
\centering
\begin{tabular}{| p{3cm} | p{2cm} | p{2cm} | p{2cm} | p{2cm} | p{2cm} | }
\hline
Representation / Conjugacy class representative and size & $(\, )$ Identity element (Size 1) & $(1,\, 2)(3,\, 4)$ (Size 3) & $(1,\, 2)$ (Size 6) & $(1,\, 2,\, 3,\, 4)$ (Size 6) & $(1,\, 2,\, 3)$ (Size 8) \\  \hline
Trivial representation $\chi_1$ & 1 & 1 & 1 & 1 & 1 \\  
\hline
Sign representation $\chi_2$ & 1 & 1 & -1 & -1 & 1 \\  
\hline
Irreducible representation of degree two with kernel of order four $\chi_3$ & 2 & 2 & 0 & 0 & -1 \\  
\hline
Standard representation $\chi_4$ & 3 & -1 & 1 & -1 & 0 \\  
\hline
Product of standard and sign representation $\chi_5$ & 3 & -1 & -1 & 1 & 0 \\  
\hline
\end{tabular}
\caption{The character table of $S_4$}
\label{tab:chars4}
\end{table}

Let $\chi$ be the character of the representation $\phi\colon S_4\hookrightarrow GL_{12}(\Q)$. Recall that, for an element $g\in S_4$, the number $\chi(g)=Tr(\phi(g))$ is equal to the number of generators $\lambda_{i,j}$ that are fixed by $g$.
In Table~\ref{tab:charchi} we show the character $\chi$ evaluated in each of the five conjugacy classes of elements in $S_4$, given by representatives.
\begin{table}[htb] 
\centering
\begin{tabular}{| p{2.8cm} | p{2.9cm} | p{2.5cm} | p{2.8cm} | p{2.5cm} | }
\hline
$(\, )$ Identity element (Size 1) & $\tau=(1,\, 2)(3,\, 4)$ (Size 3) & $\tau=(1,\, 2)$ (Size 6) & $\tau=(1,\, 2,\, 3,\, 4)$ (Size 6) & $\tau=(1,\, 2,\, 3)$ (Size 8) \\  \hline
$\chi(1)=12$ & $\chi(\tau)=0$ & $\chi(\tau)=2$ & $\chi(\tau)=0$ & $\chi(\tau)=0$  \\  
\hline
\end{tabular}
\caption{The character $\chi\colon S_4 \to GL_{12}(\Q)$}
\label{tab:charchi}
\end{table}

Now, we compute the components of the character $\chi$:
$$
\begin{array}{rclcl}
(\chi \mid \chi_1) & = & \frac{1}{24}(12\cdot 1 + 6\cdot 2\cdot 1) & = & 1\\
(\chi \mid \chi_2) & = & \frac{1}{24}(12\cdot 1 + 6\cdot 2\cdot (-1)) & = & 0\\
(\chi \mid \chi_3) & = & \frac{1}{24}(12\cdot 2 + 6\cdot 2\cdot 0) & = & 1\\
(\chi \mid \chi_4) & = & \frac{1}{24}(12\cdot 3 + 6\cdot 2\cdot 1) & = & 2\\
(\chi \mid \chi_5) & = & \frac{1}{24}(12\cdot 3 + 6\cdot 2\cdot (-1)) & = & 1\\
\end{array}
$$
Hence, the character $\chi$ has the decomposition
\begin{equation}\label{eq:decompchi}
\chi = \chi_1 + \chi_3 + 2\chi_4 + \chi_5.
\end{equation}

Let $V\subseteq \Q^{12}$ be the submodule of $\Q^{12}$ corresponding to $\chi_1 + \chi_3 + 2\chi_4$. 
Then, $V'=V\cap \Z^{12}$ is a submodule of $\Z^{12}$ and so a normal subgroup of $\Z^{12}\rtimes S_4$ such that $\Z^{12}/V'$ is torsion-free.

It follows that, as groups, we can write $\Z^{12} = V' \oplus W'$ where both $V'\cong \Z^9$ and $W'\cong \Z^3$ are free abelian.

\begin{lem}\label{lem:vprime}
The group $V'$ is a characteristic subgroup of $\Z^{12}\rtimes S_4$. 
\end{lem}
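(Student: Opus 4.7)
\textbf{Proof plan for \relem{vprime}.} The plan is to use that $\Gamma:=\Z^{12}\rtimes S_4$ is a crystallographic group and that $\Out(S_4)=1$, combined with an isotypic-component argument over $\Q$. Since the action of $S_4$ on $\Z^{12}$ permutes the generators $\lambda_{i,j}$ via $w\cdot\lambda_{i,j}=\lambda_{w^{-1}(i),w^{-1}(j)}$, and two distinct pairs $(i,j)$ are moved by any non-trivial $w\in S_4$, the holonomy representation $\phi\colon S_4\to GL_{12}(\Z)$ is faithful. Hence $\Gamma$ is a crystallographic group and, as recalled earlier in the paper, $\Z^{12}$ is the (unique) maximal normal abelian subgroup of $\Gamma$. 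In particular, $\Z^{12}$ is a characteristic subgroup of $\Gamma$.

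Given any $\psi\in\aut{\Gamma}$, the restriction of $\psi$ to $\Z^{12}$ therefore yields an automorphism $D\in GL_{12}(\Z)$, and $\psi$ induces an automorphism $\bar\psi\in\aut{\Gamma/\Z^{12}}=\aut{S_4}$. These data satisfy the intertwining identity
\[
D\circ\phi(s)=\phi(\bar\psi(s))\circ D\qquad\text{for all }s\in S_4.
\]
I would then extend $D$ to $D_{\Q}\in GL_{12}(\Q)$ and show that $D_{\Q}$ preserves each isotypic component of $\Q^{12}$ for the $S_4$-action, in particular the subspace $V$ of type $\chi_1+\chi_3+2\chi_4$.

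The key point, and the only genuine obstacle, is the following. A priori, $D_{\Q}$ sends the $\chi_i$-isotypic component $V_i$ to an $S_4$-submodule of $\Q^{12}$ on which the $S_4$-action is the \emph{twisted} representation $\phi\circ\bar\psi^{-1}$. To identify its isotype, note that since $\Out(S_4)=1$ the automorphism $\bar\psi$ is inner, so $\bar\psi^{-1}$ permutes conjugacy classes trivially and $\chi_i\circ\bar\psi^{-1}=\chi_i$ as class functions. Consequently the character of $D_{\Q}(V_i)$ equals that of $V_i$, namely $m_i\chi_i$. Thus $D_{\Q}(V_i)$ is contained in the $\chi_i$-isotypic component of $\Q^{12}$, and by comparison of dimensions $D_{\Q}(V_i)=V_i$.

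Applying this to $V=V_1\oplus V_3\oplus V_4$ gives $D_{\Q}(V)=V$; combined with $D(\Z^{12})=\Z^{12}$ one obtains
\[
\psi(V')=D(V\cap\Z^{12})=D_{\Q}(V)\cap\Z^{12}=V\cap\Z^{12}=V'.
\]
As $\psi\in\aut{\Gamma}$ was arbitrary, $V'$ is characteristic in $\Gamma$, completing the argument.
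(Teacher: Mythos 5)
Your argument is correct, and it reaches the same representation-theoretic crux as the paper — namely that $\chi_5$ does not occur in $\chi_1+\chi_3+2\chi_4$ and that $\aut{S_4}=\inn{S_4}$ — but the route is genuinely lighter. The paper first composes $\psi$ with an inner automorphism so that the induced map on $S_4$ is the identity, then invokes the second Bieberbach theorem to realise $\psi$ as conjugation by some $(d,D)\in\aff{\R^{12}}$, and finally writes $D$ in block form and kills the off-diagonal block $D_3$ by Schur's lemma, since $D_3$ would be a module map from the $\chi_1+\chi_3+2\chi_4$ part onto the irreducible $\chi_5$ part. You instead observe that $D$ is simply the restriction of $\psi$ to the characteristic lattice $\Z^{12}$ (so no affine realisation is needed), keep $\bar\psi$ arbitrary, and use the intertwining relation $D\phi(s)=\phi(\bar\psi(s))D$ together with the fact that twisting by an inner automorphism fixes every character, so that $D_{\Q}$ must preserve each isotypic component and in particular $V$. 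This buys you a proof that avoids the Bieberbach machinery and the explicit matrix computation, at the cost of a slightly more careful bookkeeping of which representation ($\phi$ versus $\phi\circ\bar\psi^{-1}$) lives on the source and target of $D_{\Q}$ — a point you handle correctly via $\chi_i\circ\bar\psi^{-1}=\chi_i$. Both proofs ultimately rest on the same multiplicity computation \reqref{decompchi}.
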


\begin{proof}
Let $\{ e_1,e_2,\ldots,e_{12} \}$ be a generating set of $\Z^{12}$ such that $\{ e_1,e_2,\ldots,e_9 \}$ generate $V'$. 
Let $w\in S_4$. 
With respect to this generating set we can write $\phi(w)$ as a $12\times 12$ matrix and since $V'$ is a submodule, we have that 
$$
\phi(w) = 
\begin{pmatrix} 
\sigma_1(w) & \alpha(w) \\
0 & \sigma_2(w) 
\end{pmatrix}
$$
with $\sigma_1\colon S_4\to GL_9(\Z)$ corresponding to $\chi_1 + \chi_3 + 2\chi_4$ and $\sigma_2\colon S_4\to GL_3(\Z)$ corresponding to $\chi_5$. 

We can embed $\Z^{12}\rtimes S_4$ into $\aff{\R^{12}}=\R^{12}\rtimes GL_{12}(\R)$ by mapping $(z,w)$ to $(z,\phi(w))$. 
Let $\psi\in \aut{\Z^{12}\rtimes S_4}$. Note that $\Z^{12}$ is characteristic. 
So, $\psi$ induces an automorphism $\overline{\psi}$ on $S_4$. We know that $\aut{S_4}=\inn{S_4}$.
Hence, there is an inner automorphism $\mu \in \inn{\Z^{12}\rtimes S_4}$ such that $\psi\circ \mu$ induces the identity on $S_4$. 

As $V' \triangleleft \Z^{12}\rtimes S_4$ we know that $\mu(V')=V'$. So, we may assume from now onwards that $\psi$ induces the identity on $S_4$. 
As mentioned before $\psi$ is realized by an affine conjugation. So, there exists an element $(d,D)\in \aff{\R^{12}}$ so that $\psi(z,\phi(w))=(d,D)(z,\phi(w))(d,D)^{-1}$. 
As $\psi$ induces the identity on $S_4$, we must have that $D\phi(w)D^{-1}=\phi(w)$, for all $w\in S_4$. 

Write $D$ as
$
\begin{pmatrix} 
D_1 & D_2 \\
D_3 & D_4 
\end{pmatrix}.
$
Therefore,
$$
\begin{pmatrix} 
D_1 & D_2 \\
D_3 & D_4 
\end{pmatrix}
\begin{pmatrix} 
\sigma_1(w) & \alpha(w) \\
0 & \sigma_2(w) 
\end{pmatrix}
=
\begin{pmatrix} 
\sigma_1(w) & \alpha(w) \\
0 & \sigma_2(w) 
\end{pmatrix}
\begin{pmatrix} 
D_1 & D_2 \\
D_3 & D_4 
\end{pmatrix}
$$
and so
\begin{equation}\label{eq:d3sigma}
D_3\sigma_1(w) = \sigma_2(w) D_3, \textrm{ for all $w\in S_4$.}
\end{equation}

Notice that $D_3$ is a $3\times 9$ matrix and can be viewed as a map $D_3\colon \Q^9\to \Q^3$, with $\Q^9$ an $S_4$-module via $\sigma_1$ and $\Q^3$ an $S_4$-module via $\sigma_2$. 
Equation~\reqref{d3sigma} shows that $D_3$ is an $S_4$-module map from $\Q^9$ to $\Q^3$, where $\Q^3$ is an irreducible module and $\Q^9$ does not contain a submodule isomorphic to $\Q^3$ and so $D_3 = 0$.
Hence  $
D=\begin{pmatrix} 
D_1 & D_2 \\
0 & D_4 
\end{pmatrix}.
$

It now follows that for $z\in V'$ we have that 
$$
\begin{array}{rcl}
 \psi(z) & = & (d,D)(z,1)(d,D)^{-1}\\
 & = & (d+Dz, D)(-D^{-1}d,D^{-1})\\
 & = & (d+Dz -d, 1)\\
 & = & (Dz,1).
\end{array}
$$
But, since $
D=\begin{pmatrix} 
D_1 & D_2 \\
0 & D_4 
\end{pmatrix}
$ 
it follows that $Dz\in V'$. 
\end{proof}

\begin{thm}\label{thm:crystgroup}
The group $\Z^{12}\rtimes S_4$ has the R$_{\infty}$-property.
\end{thm}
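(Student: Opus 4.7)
The plan is to leverage Lemma~\ref{lem:vprime} together with Lemma~\ref{lem:ses}(a): since $V'\cong \Z^9$ is a characteristic subgroup of $\Gamma := \Z^{12}\rtimes S_4$, it suffices to prove that the quotient
\[ \bar\Gamma := \Gamma/V' \cong W'\rtimes S_4 \cong \Z^3 \rtimes S_4 \]
has the R$_\infty$-property. Note that $\bar\Gamma$ is itself a $3$-dimensional crystallographic group: its translation subgroup is $W'\cong \Z^3$, and the induced holonomy representation is (the integral form of) the irreducible character $\chi_5$, which is faithful on $S_4$.

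The first step is to understand $\aut{\bar\Gamma}$. Since $\Z^3$ is the maximal abelian translation subgroup of $\bar\Gamma$, it is characteristic, so any automorphism $\bar\psi$ of $\bar\Gamma$ descends to an automorphism of $S_4$. Since $\aut{S_4}=\inn{S_4}$, after replacing $\bar\psi$ by its composition with a suitable inner automorphism of $\bar\Gamma$ (which does not alter the Reidemeister number), I may assume $\bar\psi$ induces the identity on $S_4$. By the Bieberbach framework recalled in Section~\ref{subsec:thm2}, $\bar\psi$ is then realized by conjugation with an element $(d,\bar D)\in \aff{\R^3}$, and the identity action on $S_4$ forces $\bar D$ to centralize $\chi_5(S_4)$ in $GL_3(\R)$. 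Since $\chi_5$ is absolutely irreducible, Schur's lemma gives $\bar D=\lambda I$ for some $\lambda\in\R^\times$; and since $\bar D$ must preserve the lattice $W'$, we obtain $\lambda\in\{\pm 1\}$.

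The second step is to bound $R(\bar\psi)$ in each of the two cases using the standard formula for twisted conjugacy in a crystallographic group $\Z^n\rtimes F$ (see e.g.\ \cite{De}): for $\bar\psi$ realized by $(d,\bar D)$ with trivial action on $F$, one has $R(\bar\psi)=\sum_{[f]} R_f(\bar\psi)$ where the sum runs over conjugacy classes of $F$, and $R_f(\bar\psi)=\infty$ whenever $\det(I-\bar D\,\chi_5(f))=0$. If $\bar D=I$, take $f=e$: the matrix $I-\chi_5(e)=0$ is clearly singular. If $\bar D=-I$, take the transposition $f=(1,2)$: since $\chi_5=\chi_4\cdot \chi_2$, we have $\chi_5((1,2))=-\chi_4((1,2))$, and $\chi_4((1,2))$ (a standard-representation reflection) has eigenvalues $1,1,-1$; hence $\chi_5((1,2))$ has eigenvalues $-1,-1,1$, so $I+\chi_5((1,2))$ has $0$ as an eigenvalue and is singular. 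In both cases $R(\bar\psi)=\infty$, so $\bar\Gamma$ has the R$_\infty$-property, and Lemma~\ref{lem:ses}(a) completes the proof.

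The main obstacle is the careful invocation of the Reidemeister-number criterion for crystallographic groups, in particular keeping track of the distinction between real/rational eigenvalue computations and the lattice-preservation constraint that pins down $\lambda=\pm 1$. Once this is granted, the remainder of the argument is a short case check on the two possible linear parts, as outlined above.
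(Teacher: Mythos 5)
Your reduction is exactly the paper's: quotient $\Gamma=\Z^{12}\rtimes S_4$ by the characteristic subgroup $V'$ of Lemma~\ref{lem:vprime}, identify the quotient with the $3$-dimensional crystallographic group $\Z^3\rtimes S_4$ with faithful holonomy $\chi_5$, and finish with Lemma~\ref{lem:ses}(a). Where you diverge is in the last step: the paper disposes of $\Z^3\rtimes S_4$ by citing Theorem~4.2 of Dekimpe--Penninckx \cite{DP}, whereas you prove it directly. Your argument is sound: after normalizing by an inner automorphism you may assume the induced map on $S_4$ is the identity, Bieberbach's second theorem realizes the automorphism as conjugation by $(d,\bar D)$, absolute irreducibility of $\chi_5$ plus Schur's lemma forces $\bar D=\lambda I$, lattice preservation gives $\lambda=\pm 1$, and the eigenvalue check ($\det(I-\chi_5(e))=0$ for $\lambda=1$; $\chi_5((1,2))=-\chi_4((1,2))$ has eigenvalue $-1$, so $\det(I+\chi_5((1,2)))=0$ for $\lambda=-1$) shows $R(\bar\psi)=\infty$ in both cases via the standard decomposition of Reidemeister classes over the finite quotient. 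What your version buys is self-containedness and an explicit description of the possible linear parts; what the citation buys is brevity and avoidance of restating the twisted-conjugacy criterion for crystallographic groups, which is the one ingredient you invoke somewhat informally (the claim that $\det(I-\bar D\chi_5(f))=0$ for a single conjugacy class already forces infinitely many Reidemeister classes deserves the precise reference). No gap; this is a correct, slightly more explicit variant of the paper's proof.
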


\begin{proof}
The quotient of $\Z^{12}\rtimes S_4$ by the characteristic subgroup $V'$ of \relem{vprime} satisfies
$$
\Z^{12}/V'\rtimes S_4 \cong \Z^3\rtimes S_4
$$
where the action is faithful (it corresponds to $\chi_5$). Then it is a 3-dimensional crystallographic group. From \cite[Theorem~4.2]{DP} we know that this group has the 
R$_{\infty}$-property. 
Hence, from \relem{ses} the result follows. 
\end{proof}

\begin{cor}\label{cor:excep}
The virtual braid group $VB_4$, the welded braid group $WB_4$ and the unrestricted virtual braid group $UVB_4$ have the R$_{\infty}$-property.
\end{cor}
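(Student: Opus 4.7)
The plan is to imitate the argument used for \reco{vb3}, now with \rethm{crystgroup} playing the role of \rethm{vb3cryst}. The first step is to identify the quotient $VB_4/[VP_4,\,VP_4]$ with the crystallographic group treated in \rethm{crystgroup}. From the semidirect product decomposition $VB_4=VP_4\rtimes S_4$ and the fact (analogous to the $n=3$ case recalled in the proof of \relem{vb3cryst}, and available in the work of Bardakov-Bellingeri and Bardakov for general $n$) that $VP_4/[VP_4,\,VP_4]$ is the free abelian group of rank $12$ generated by the cosets of the generators $\lambda_{i,j}$, $1\leq i\neq j\leq 4$, with $S_4$ acting by permutation of the indices, one obtains
\[
VB_4/[VP_4,\,VP_4]\;\cong\;\Z^{12}\rtimes S_4,
\]
where the right hand side is precisely the group whose action was decomposed via character theory in \rethm{crystgroup}. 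That theorem therefore yields the R$_\infty$-property of this quotient.

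Next, I would use \rethm{maincharvbn}(a) to conclude that $VP_4$ is a characteristic subgroup of $VB_4$; since $[VP_4,\,VP_4]$ is automatically characteristic inside $VP_4$, it is also characteristic in $VB_4$. Applying \relem{ses}(a) to the short exact sequence
\[
1\longrightarrow [VP_4,\,VP_4]\longrightarrow VB_4\longrightarrow VB_4/[VP_4,\,VP_4]\longrightarrow 1
\]
then transfers the R$_\infty$-property from the quotient to $VB_4$ itself.

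For the welded and unrestricted cases, the scheme is the same. By \rethm{maincharvbn}(b) and (c), the subgroups $WP_4\triangleleft WB_4$ and $UVP_4\triangleleft UVB_4$ are characteristic, hence so are $[WP_4,\,WP_4]$ and $[UVP_4,\,UVP_4]$. Appealing to \cite[Theorem~5.1]{CO} (whose proof, exactly as in the $n=3$ case used in \reco{vb3}, shows that passing to the commutator of the pure subgroup kills the forbidden relations), one gets
\[
WB_4/[WP_4,\,WP_4]\;\cong\;UVB_4/[UVP_4,\,UVP_4]\;\cong\;VB_4/[VP_4,\,VP_4]\;\cong\;\Z^{12}\rtimes S_4,
\]
so both quotients carry the R$_\infty$-property by \rethm{crystgroup}, and \relem{ses}(a) finishes the proof.

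The substance of the work has been done in \rethm{crystgroup} and \rethm{maincharvbn}; the present corollary is essentially the assembly of these inputs. The only point requiring care is verifying that the isomorphism $VB_4/[VP_4,\,VP_4]\cong\Z^{12}\rtimes S_4$ is given precisely by the $S_4$-action used in \rethm{crystgroup} (so that the character-theoretic computations there genuinely apply), and that the analogous isomorphism survives when one passes to the welded and unrestricted quotients. If the reference \cite[Theorem~5.1]{CO} does not cover $n=4$ as stated, a short direct check of the forbidden relations in the abelian-by-$S_4$ quotient (exactly as was done for $n=3$) would suffice; this is the only place where a small additional verification could be needed.
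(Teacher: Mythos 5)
Your proposal is correct and follows essentially the same route as the paper: identify $VB_4/[VP_4,\,VP_4]$ with the crystallographic group $\Z^{12}\rtimes S_4$ of Theorem~\ref{thm:crystgroup} (the paper cites \cite[Theorem~3.3]{CO} for this), invoke \cite[Theorem~5.1]{CO} for the welded and unrestricted quotients, and transfer the R$_\infty$-property up via Lemma~\ref{lem:ses} together with the characteristicness of the pure subgroups from Theorem~\ref{thm:maincharvbn}. Your explicit remark that $[VP_4,\,VP_4]$ is characteristic in $VB_4$ because it is characteristic in the characteristic subgroup $VP_4$ is exactly the detail the paper leaves implicit.
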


\begin{proof}
We note that the group $VB_4/[VP_4, VP_4]$ is isomorphic to the group $\Z^{12}\rtimes S_4$ of \rethm{crystgroup} (see \cite[Theorem~3.3]{CO}), so by \rethm{crystgroup} it has the R$_{\infty}$-property. 
From \cite[Theorem~5.1]{CO}  $VB_4/[VP_4, VP_4]$ is isomorphic to $WB_4/[WP_4, WP_4]$ as well as to $UVB_4/[UVP_4, UVP_4]$. 
Then, from \relem{ses} and \rethm{maincharvbn} we conclude the result for this corollary. 
\end{proof}

\subsection{General cases} 

In the next proposition we show that, for $n\geq 2$, the R$_{\infty}$-property holds for the unrestricted virtual braid group $UVB_n$ and its pure subgroup $UVP_n$. Then we use it to prove the result for the virtual braid group $VB_n$, with $n\geq 5$.

\begin{prop}\label{prop:uvbn}
Let $n\geq 2$. 
The unrestricted virtual pure braid group $UVP_n$ and the unrestricted virtual braid group $UVB_n$ have the R$_{\infty}$-property.
\end{prop}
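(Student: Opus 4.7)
The plan is to establish the R$_{\infty}$-property for $UVP_n$ first, and then bootstrap to $UVB_n$ via the characteristic subgroup structure already set up in \rethm{maincharvbn} combined with \relem{ses}.

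\emph{Stage 1: $UVP_n$.} Invoke the known structure theorem of Bardakov--Bellingeri--Damiani identifying $UVP_n$ with a direct product of free groups, in which at least one factor is non-abelian for every $n \geq 2$. For $n = 2$, the group $UVP_2$ is itself a non-abelian free group (of rank $2$), and hence has the R$_{\infty}$-property as a non-elementary Gromov hyperbolic group \cite{F,LL}. For $n \geq 3$, combine three ingredients: (i) Krull--Schmidt-type rigidity for direct products of centerless, freely indecomposable groups, which forces any $\varphi \in \aut{UVP_n}$ to respect the decomposition up to permutation of isomorphic factors; (ii) multiplicativity of Reidemeister numbers across preserved direct-product factors; and (iii) the R$_{\infty}$-property of every non-abelian free factor. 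Together these yield $R(\varphi) = \infty$ for every automorphism of $UVP_n$.

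\emph{Stage 2: $UVB_n$.} For $n = 2$, the isomorphism $UVB_2 \cong \Z \ast \Z_2$ together with \cite{GSW} (and as already recalled at the start of \resec{vbn}) gives R$_{\infty}$ directly. For $n \geq 3$, \rethm{maincharvbn} tells us that $UVP_n$ is characteristic in $UVB_n$, so we have a short exact sequence
\[ 1 \to UVP_n \to UVB_n \to S_n \to 1 \]
with characteristic kernel (already shown to have R$_{\infty}$ in Stage 1) and finite quotient. Applying \relem{ses}(b) then delivers R$_{\infty}$ for $UVB_n$.

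\emph{Main obstacle.} The genuinely delicate step is Stage 1: controlling how an arbitrary automorphism of $UVP_n$ acts on a direct-product decomposition, so as to prevent "mixing" between factors that would invalidate the multiplicativity of Reidemeister numbers. Rigidity of direct-product decompositions of centerless, freely indecomposable groups does the job, but some mild care is needed when the decomposition contains a rank-one ($\Z$) factor (which itself lacks R$_{\infty}$): such a factor is distinguishable from the non-abelian ones by rank and must be shown to be preserved setwise, so that the R$_{\infty}$ contribution from the non-abelian factors still propagates.
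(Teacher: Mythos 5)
Your proposal is correct and follows essentially the same route as the paper: the paper likewise identifies $UVP_n$ with the direct product of $n(n-1)/2$ copies of the free group of rank $2$ (citing \cite[Remark~2.10]{M}), concludes the R$_\infty$-property of this product by citing \cite[Example~5.1.8]{DS} --- which is exactly the statement your Stage~1 re-proves via Krull--Schmidt rigidity --- and then applies \rethm{maincharvbn} and \relem{ses} to $1\to UVP_n\to UVB_n\to S_n\to 1$ precisely as in your Stage~2, handling $n=2$ via $UVB_2\cong \Z\ast\Z_2$ and \cite{GSW}. The only points to tidy in your Stage~1 are that an automorphism may permute the $F_2$ factors non-trivially, so the ``multiplicativity'' must be applied orbit-by-orbit (using that the Reidemeister number of a cyclic shift equals that of the composite map around the cycle), and that since every factor here is $F_2$, the rank-one caveat you raise never actually arises.
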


\begin{proof}
The case $n=2$ for $UVB_n$ was mentioned in the first paragraph of this section. 
From \cite[Remark~2.10]{M} it follows that, for $n\geq 2$, $UVP_n$ is isomorphic to the direct product of $n(n-1)/2$ copies of the free group of rank 2. Hence, from \cite[Example~5.1.8]{DS} we conclude that $UVP_n$ has the R$_{\infty}$-property.

Now, let $n\geq 3$. 
From \rethm{maincharvbn}, the group $UVP_n$ is a characteristic subgroup of $UVB_n$ (see also \cite[Proposition~2.15]{M} for $n\geq 5$). Then, from \relem{ses} applied to the short exact sequence $1\to UVP_n\to UVB_n\to S_n\to 1$ we obtain the result for $UVB_n$.
\end{proof}
 
\begin{rem}
We note that, for $n=3$ and $4$, we also proved the R$_{\infty}$-property for $UVB_n$ in Corollaries~\ref{cor:vb3} and~\ref{cor:excep}, respectively, but using different techniques.
\end{rem}

All possible homomorphisms from $VB_n$ to $VB_m$ were determined in \cite[Theorem~2.3]{BP}, for $n\geq 5$, $m\geq 2$ and $n\geq m$.  
In particular, for $n\geq 5$, $\out{VB_n}$, the outer automorphism group of $VB_n$,  is isomorphic to $\Z_2\times \Z_2$ and is generated by the classes of $\zeta_1$ and $\zeta_2$ where
\begin{itemize}
	\item $\zeta_1\colon VB_n\to VB_n$ is defined by $\zeta_1(\sigma_i)=v_i\sigma_iv_i$ and $\zeta_1(v_i)=v_i$;
	
	\item $\zeta_2\colon VB_n\to VB_n$ is defined by $\zeta_2(\sigma_i)=\sigma_i^{-1}$ and $\zeta_2(v_i)=v_i$.
\end{itemize}
for $i=1,\ldots,n-1$, see \cite[Corollary~2.5]{BP}.

\begin{lem}\label{lem:key}
Let $n\geq 5$. 
The normal closure $K$ of the set 
$$
\{v_i\sigma_{i+1}\sigma_iv_{i+1}\sigma_i^{-1} \sigma_{i+1}^{-1} ;\,  v_{i+1}\sigma_i \sigma_{i+1}v_i\sigma_{i+1}^{-1}\sigma_i^{-1}  \mid  i=1,\dots,n-2\}
$$ 
is a characteristic subgroup of $VB_n$.
\end{lem}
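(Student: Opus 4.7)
My plan is to identify $K$ intrinsically as the kernel of the projection onto $UVB_n$, then reduce the problem to the two non-trivial generators of $\out{VB_n}$, and verify each of them by a direct relation-level computation inside $UVB_n$.

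First, I would observe that, modulo $v_{i+1}^2=1$, the equation $v_i\sigma_{i+1}\sigma_iv_{i+1}\sigma_i^{-1}\sigma_{i+1}^{-1}=1$ is equivalent to the first forbidden relation $v_i\sigma_{i+1}\sigma_i=\sigma_{i+1}\sigma_iv_{i+1}$, and the second generator of $K$ similarly encodes the second forbidden relation from Definition~\ref{def:uvbn}. Hence $K=\ker{p}$ for the canonical projection $p\colon VB_n\to UVB_n$, so $VB_n/K\cong UVB_n$, and the statement becomes: every $\phi\in\aut{VB_n}$ descends to an endomorphism (and hence to an automorphism) of $UVB_n$.

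Next I would invoke \cite[Corollary~2.5]{BP}: for $n\geq 5$, $\out{VB_n}\cong\Z_2\times\Z_2$ is generated by the classes of $\zeta_1$ and $\zeta_2$. Inner automorphisms always preserve every normal subgroup of $VB_n$, and a direct check using $v_i^2=1$ gives $\zeta_1^2=\zeta_2^2=\id_{VB_n}$; thus it is enough to show that each $\zeta_j$ induces a well-defined endomorphism $\bar\zeta_j\colon UVB_n\to UVB_n$, for this will imply $\zeta_j(K)\subseteq K$, and then $K\subseteq \zeta_j^2(K)=\zeta_j(\zeta_j(K))\subseteq \zeta_j(K)$ gives equality. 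Since $\zeta_j$ is already an automorphism of $VB_n$, the Artin, permutation and mixed relations are automatically respected by $\bar\zeta_j$; only the two forbidden relations need to be checked.

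For $\zeta_2$ (sending $\sigma_i\mapsto\sigma_i^{-1}$ and fixing each $v_j$), applying $\zeta_2$ to $v_i\sigma_{i+1}\sigma_i=\sigma_{i+1}\sigma_iv_{i+1}$, then inverting both sides and using $v_j^2=1$, produces exactly $\sigma_i\sigma_{i+1}v_i=v_{i+1}\sigma_i\sigma_{i+1}$, i.e. the second forbidden relation; by the same computation the second relation maps to the first. For $\zeta_1$ (sending $\sigma_i\mapsto v_i\sigma_iv_i$ and fixing each $v_j$), I would first extract from MR2 together with PR1 the compact identity $\zeta_1(\sigma_{i+1})=w\,\zeta_1(\sigma_i)\,w^{-1}$, where $w=v_iv_{i+1}$ satisfies $w^3=1$ and $w^{-1}=v_{i+1}v_i$. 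Substituting this formula into the image under $\zeta_1$ of each forbidden relation and repeatedly applying $v_j^2=1$, the braid relation $v_iv_{i+1}v_i=v_{i+1}v_iv_{i+1}$, and the two forbidden relations themselves (in the convenient rewritten forms $v_i\sigma_{i+1}\sigma_iv_{i+1}=\sigma_{i+1}\sigma_i$ and $v_{i+1}\sigma_i\sigma_{i+1}v_i=\sigma_i\sigma_{i+1}$), both sides collapse to a common word in $UVB_n$.

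The hard part will be this last verification for $\zeta_1$: although every individual step is elementary, coordinating the $w$-conjugation identity with both forbidden relations requires careful word-level bookkeeping. Everything else is a formal consequence of the classification of $\out{VB_n}$ together with the identification $K=\ker{p}$.
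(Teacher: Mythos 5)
Your plan is essentially the paper's own proof: both arguments reduce, via \cite[Corollary~2.5]{BP}, to checking that the two representatives $\zeta_1,\zeta_2$ of $\out{VB_n}$ preserve $K$ (inner automorphisms preserving any normal subgroup), and both settle this by direct word computation. Your repackaging of ``$\zeta_j(K)\subseteq K$'' as ``$\zeta_j$ descends to an endomorphism of $UVB_n=VB_n/K$'' is equivalent and correct, as is the observation that only the two forbidden relations need to be verified and that $\zeta_j^2=\id$ upgrades the inclusion to an equality. Your $\zeta_2$ computation is complete and matches the paper's (each forbidden relation is sent to a conjugate of the other).

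The one real gap is the part you yourself flag: the $\zeta_1$ verification is not carried out, and that computation is the entire substance of the paper's proof. It does close up, and the paper's way of organizing it is worth comparing with yours: rather than working in the quotient, the paper applies $\zeta_1$ directly to each generator of $K$ inside $VB_n$ and, using only $v_j^2=1$, the braid relation for the $v_j$, and the various rewritings of (MR2), collapses the result to an explicit conjugate of a generator of $K$, namely
$$
\zeta_1\bigl(v_i\sigma_{i+1}\sigma_iv_{i+1}\sigma_i^{-1}\sigma_{i+1}^{-1}\bigr)
=(v_{i+1}v_iv_{i+1})\,\bigl(v_{i+1}\sigma_{i}\sigma_{i+1} v_i\sigma_{i+1}^{-1}\sigma_{i}^{-1}\bigr)\,(v_{i+1}v_iv_{i+1})
$$
and symmetrically with $i$ and $i+1$ interchanged for the other generator. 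This has the small advantage of never invoking the forbidden relations themselves, whereas your sketch proposes to use them as rewriting rules in $UVB_n$; that is also legitimate, but it means you must be careful that every application of a forbidden relation is an application of an element of $K$, which is automatic in the quotient but adds bookkeeping. Either way, until the $\zeta_1$ computation is written out (your $w$-conjugation identity $\zeta_1(\sigma_{i+1})=v_iv_{i+1}\,\zeta_1(\sigma_i)\,v_{i+1}v_i$ is a correct and useful starting point), the proof is not complete.
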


\begin{proof}

We shall use the presentation of $VB_n$ given in Definition~\ref{apvbn}. 
Recall that, for all $i=1,\ldots,n-1$, $v_i=v_i^{-1}$ in $VB_n$. 
In the following computations we use the mixed relation (MR2) $v_iv_{i+1}\sigma_{i}=\sigma_{i+1}v_{i}v_{i+1}$ of $VB_n$, for $i=1,2,\dots,n-2$,  which is equivalent to 
$\sigma_{i}v_{i+1}v_i = v_{i+1}v_{i}\sigma_{i+1}$ or to 
$v_iv_{i+1}\sigma_{i}^{-1}=\sigma_{i+1}^{-1}v_{i}v_{i+1}$ or to 
$\sigma_{i}^{-1}v_{i+1}v_i = v_{i+1}v_{i}\sigma_{i+1}^{-1}$.

$$
\begin{array}{rcl}

\zeta_1(v_i\sigma_{i+1}\sigma_iv_{i+1}\sigma_i^{-1}\sigma_{i+1}^{-1}) & = & 
v_iv_{i+1}\sigma_{i+1}v_{i+1} v_{i}\sigma_i\underline{v_{i} v_{i+1} v_{i}}\sigma_i^{-1}v_{i} v_{i+1}\sigma_{i+1}^{-1}v_{i+1}\\
 & = & v_iv_{i+1}\sigma_{i+1}v_{i+1} v_{i}\underline{\sigma_iv_{i+1} v_{i}} \underline{v_{i+1}\sigma_i^{-1}}v_{i} v_{i+1}\sigma_{i+1}^{-1}v_{i+1}\\
 & = & v_iv_{i+1}\sigma_{i+1}\underline{v_{i+1} v_{i}v_{i+1}v_{i}}\sigma_{i+1}   v_i\sigma_{i+1}^{-1}\underline{v_{i}v_{i+1}v_{i} v_{i+1}}\sigma_{i+1}^{-1}v_{i+1}\\
 & = & v_iv_{i+1}\underline{ \sigma_{i+1}v_{i}v_{i+1}}\sigma_{i+1} v_i\sigma_{i+1}^{-1}\underline{v_{i+1}v_{i}\sigma_{i+1}^{-1}}v_{i+1}\\
 & = & \underline{v_iv_{i+1}v_i}v_{i+1}\sigma_{i}\sigma_{i+1} v_i\sigma_{i+1}^{-1}\sigma_{i}^{-1}v_{i+1}v_iv_{i+1}\\
 & = & v_{i+1}v_iv_{i+1}\cdot v_{i+1}\sigma_{i}\sigma_{i+1} v_i\sigma_{i+1}^{-1}\sigma_{i}^{-1} \cdot v_{i+1}v_iv_{i+1} 
\end{array}
$$

$$
\begin{array}{rcl}

\zeta_1(v_{i+1}\sigma_i\sigma_{i+1}v_i\sigma_{i+1}^{-1}\sigma_i^{-1}) & = & 
v_{i+1}v_{i}\sigma_iv_{i}v_{i+1}\sigma_{i+1}\underline{v_{i+1}v_i v_{i+1}}\sigma_{i+1}^{-1}v_{i+1}v_{i}\sigma_i^{-1}v_{i}\\
 & = & v_{i+1}v_{i}\sigma_iv_{i}v_{i+1}\underline{\sigma_{i+1}v_iv_{i+1}}   \underline{v_i\sigma_{i+1}^{-1}}v_{i+1}v_{i}\sigma_i^{-1}v_{i}\\
 & = & v_{i+1}v_{i}\sigma_i\underline{v_{i}v_{i+1}v_iv_{i+1}}\sigma_{i}    v_{i+1}\sigma_{i}^{-1}\underline{v_{i+1}v_iv_{i+1}v_{i}}\sigma_i^{-1}v_{i}\\
 & = & v_{i+1}v_{i}\underline{\sigma_iv_{i+1}v_i}\sigma_{i}    v_{i+1}\sigma_{i}^{-1}\underline{v_iv_{i+1}\sigma_i^{-1}}v_{i}\\
 & = & \underline{v_{i+1}v_{i}v_{i+1}}v_{i}\sigma_{i+1}\sigma_{i}v_{i+1}\sigma_{i}^{-1}\sigma_{i+1}^{-1}v_{i}v_{i+1}v_{i}\\
 & = & v_{i}v_{i+1}v_{i} \cdot v_{i}\sigma_{i+1}\sigma_{i}v_{i+1}\sigma_{i}^{-1}\sigma_{i+1}^{-1}\cdot v_{i}v_{i+1}v_{i}\\
\end{array}
$$

$$
\begin{array}{rcl}

\zeta_2(v_i\sigma_{i+1}\sigma_iv_{i+1}\sigma_i^{-1}\sigma_{i+1}^{-1}) & = & v_i\sigma_{i+1}^{-1}\sigma_i^{-1}v_{i+1}\sigma_i\sigma_{i+1}\\
 & = & (v_{i+1}\sigma_i\sigma_{i+1})^{-1}v_{i+1}\sigma_i\sigma_{i+1}v_i\sigma_{i+1}^{-1}\sigma_i^{-1}(v_{i+1}\sigma_i\sigma_{i+1})
\end{array}
$$

$$
\begin{array}{rcl}

\zeta_2(v_{i+1}\sigma_i\sigma_{i+1}v_i\sigma_{i+1}^{-1}\sigma_i^{-1}) & = & v_{i+1}\sigma_i^{-1}\sigma_{i+1}^{-1}v_i\sigma_{i+1}\sigma_i\\
 & = & (v_i\sigma_{i+1}\sigma_i)^{-1}v_i\sigma_{i+1}\sigma_iv_{i+1}\sigma_i^{-1}\sigma_{i+1}^{-1}(v_i\sigma_{i+1}\sigma_i)
\end{array}
$$

\end{proof}

We recall that the unrestricted virtual braid group $UVB_n$ is the quotient group $VB_n/K$ of the virtual braid group, see \redef{uvbn}.

\begin{thm}\label{main}
Let $n\geq 5$. The virtual braid group $VB_n$ has the R$_{\infty}$-property.
\end{thm}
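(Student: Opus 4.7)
The plan is to combine the two main ingredients that have been assembled just above the statement: Lemma \ref{lem:key}, which shows that the normal closure $K$ generated by the forbidden relations is characteristic in $VB_n$ for $n\geq 5$, and Proposition \ref{prop:uvbn}, which establishes the R$_\infty$-property for $UVB_n$. The bridge between them is exactly the short exact sequence
\[
1 \to K \to VB_n \to UVB_n \to 1,
\]
coming from \redef{uvbn}, together with the transfer principle recorded in \relem{ses}.

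First I would invoke \relem{key} to assert that $K$ is a characteristic subgroup of $VB_n$ (this is where the explicit computations with $\zeta_1$ and $\zeta_2$, plus the fact from \cite{BP} that $\out{VB_n} \cong \Z_2\times \Z_2$ is generated by the classes of $\zeta_1$ and $\zeta_2$, do the work). Next I would identify the quotient $VB_n/K$ with $UVB_n$, which is immediate from the definition of $UVB_n$ as the quotient of $VB_n$ by the normal closure of the forbidden relations. Then by \reprop{uvbn} the quotient $UVB_n$ has the R$_\infty$-property.

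Finally, applying \relem{ses}(1) to the above short exact sequence yields that $VB_n$ itself has the R$_\infty$-property, which is the desired conclusion. Since all the hard work has already been done in \relem{key} and \reprop{uvbn}, no further obstacle arises; the only thing to be careful about is that one actually needs $K$ to be characteristic (not merely normal) in order to apply \relem{ses}, and that is precisely what \relem{key} provides for $n\geq 5$. This is also the reason why the cases $n=3, 4$ required the separate treatment via the crystallographic-group quotients in \rethm{vb3cryst} and \rethm{crystgroup}, where the outer automorphism group of $VB_n$ is not yet known to be so tightly controlled.
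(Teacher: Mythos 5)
Your proposal is correct and follows exactly the same route as the paper: invoke Lemma \ref{lem:key} to get that $K$ is characteristic, identify $VB_n/K$ with $UVB_n$, apply Proposition \ref{prop:uvbn}, and conclude via Lemma \ref{lem:ses}. Your remark that the characteristic (not merely normal) property of $K$ is the crucial hypothesis, and that this is why $n=3,4$ need separate treatment, matches the paper's structure precisely.
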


\begin{proof}
From \relem{key} we know that $K$ is characteristic in $VB_n$. The quotient $VB_n/K$ is the unrestricted virtual braid group $UVB_n$ that, from \reprop{uvbn}, has the R$_{\infty}$-property. The desired result then follows by applying \relem{ses}.
\end{proof}

\begin{rem}

\begin{enumerate}
	\item To the best of our knowledge, for $n\geq 5$, it is not known if the welded braid group $WB_n$ has the R$_{\infty}$-property.
	
	\item From the presentation of $VP_n$ (see \cite[Theorem~1]{B}) we get $VP_2\cong \Z \ast \Z$, which we know it has the R$_{\infty}$-property. Since  $VP_2=WP_2=UVP_2$, these groups  have the R$_{\infty}$-property. 
	For $n\geq 3$, as far as we know it is unknown if the R$_{\infty}$-property holds or not for virtual pure braid groups 
	and for the welded pure braid groups. 
\end{enumerate}

\end{rem}

As explained at the end of the introduction, \rethm{mainresult} is now proved by collecting all the results of this section.

\section{Appendix}

We note that the technique used in this work to prove that some subgroups of virtual braid groups are characteristic may be applied to other braid-like groups. 
We exemplify it in this section by showing that some remarkable subgroups of virtual twin groups are characteristic. For more details about these groups see \cite{NNS1} and \cite{NNS2} and the references therein.

The {virtual twin group} $VT_n$, $n\ge 2$, admits a presentation with generators $\sigma_i,\rho_i$ for $i=1,\dots,n-1$ and defining relations:
\begin{itemize}
\item $\sigma_i^2=1$ for $i=1,2,\dots,n-1$.
\item $\sigma_i\sigma_j=\sigma_j\sigma_i$ for $|i-j|\ge 2$.
\item $\rho_i^2=1$ for $i=1,\dots,n-1$.
\item $\rho_i\rho_j=\rho_j\rho_i$ for $|i-j|\ge 2$.
\item $\rho_i\rho_{i+1}\rho_{i}=\rho_{i+1}\rho_{i}\rho_{i+1}$, for $i=1,2,\dots,n-2$.
\item $\rho_i\sigma_j=\sigma_j\rho_i$, for $|i-j|\ge 2$.
\item $\rho_i\rho_{i+1}\sigma_{i}=\sigma_{i+1}\rho_{i}\rho_{i+1}$, for $i=1,\dots,n-2$.
\end{itemize}

Let $n\geq 2$. For $1\leq i\leq n-1$ let $\tau_i=(i,i+1)$ as before. 
Let $\pi_P\colon VT_n\longrightarrow S_n$ be the homomorphism defined by ${\pi_P}(\sigma_i)={\pi_P}(\rho_i)=\tau_i$ for $i=1,\dots,n-1$. 
The \textit{pure virtual twin group} $PVT_n$ is defined to be the kernel of $\pi_P$. 
Let $\pi_K\colon VT_n\longrightarrow S_n$ be the homomorphism defined by ${\pi_K}(\sigma_i)=1$ and ${\pi_K}(\rho_i)=\tau_i$ for $i=1,\dots,n-1$. 
The kernel of $\pi_K$ will be denoted by $KT_n$.

\begin{thm}\label{thm:charvtn}
Let $n\geq 2$. 
\begin{enumerate}
	\item The groups $PVT_n$ and $KT_n$ are not isomorphic.
	
	\item The virtual pure twin group $PVT_n$ is a characteristic subgroup of the virtual twin group $VT_n$ if and only if $n\neq 3$ and the group $KT_n$ is a  characteristic subgroup of $VT_n$ if and only if $n\geq 3$.
 
\end{enumerate}

\end{thm}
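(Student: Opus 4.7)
The plan is to treat the three regimes $n=2$, $n=3$, and $n\geq 4$ separately, paralleling the strategy developed in Section~3 for the virtual braid groups.

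For $n=2$: The group $VT_2=\setang{\sigma_1,\rho_1}{\sigma_1^2=\rho_1^2=1}$ is the infinite dihedral group. The subgroup $PVT_2=\ang{\sigma_1\rho_1}$ is its unique infinite cyclic subgroup of index $2$, and hence is characteristic. On the other hand $KT_2$, the normal closure of $\sigma_1$, is isomorphic to $\Z_2\ast\Z_2$, so is not isomorphic to $PVT_2\cong\Z$; this yields (a) for $n=2$. The presentation is symmetric in $\sigma_1$ and $\rho_1$, so swapping them defines an automorphism of $VT_2$ that carries $KT_2$ to the normal closure of $\rho_1$, a distinct index-$2$ subgroup; so $KT_2$ is not characteristic.

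For $n=3$: Define $\alpha\colon VT_3\to VT_3$ by $\alpha(\rho_i)=\rho_i$ and $\alpha(\sigma_i)=\rho_1\rho_2\sigma_i\rho_2\rho_1$ for $i=1,2$. Using the mixed relation $\rho_1\rho_2\sigma_1=\sigma_2\rho_1\rho_2$ and $(\rho_1\rho_2)^3=1$, one checks that $\alpha$ respects all the defining relations and satisfies $\alpha^3=\id$, so $\alpha\in\aut{VT_3}$ has order three. Now $\sigma_1\rho_1\in PVT_3$ but $\alpha(\sigma_1\rho_1)=\rho_1\rho_2\sigma_1\rho_2\rho_1\cdot\rho_1=\sigma_2\rho_1$ lies outside $PVT_3$, since $\pi_P(\sigma_2\rho_1)=\tau_2\tau_1\neq 1$; so $PVT_3$ is not characteristic. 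To show $KT_3$ \emph{is} characteristic, apply \rethm{mainchar}: enumerate, up to conjugation, all surjective homomorphisms $\psi\colon VT_3\to S_3$. Each is determined by the endomorphism $\psi\circ\iota$ of $S_3$, where $\iota(\tau_i)=\rho_i$, and then by the involution $\psi(\sigma_1)$ (since the mixed relation determines $\psi(\sigma_2)$, and $\sigma_1^2=1$ constrains the choice). Compute the abelianizations of the resulting finitely many kernels with GAP, and verify that $(KT_3)^{Ab}$ is distinguished from all the others in the list.

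For $n\geq 4$: Proceed analogously to \rethm{surjvb4}. Using that $\out{S_n}$ is trivial for $n\neq 6$ and of order two for $n=6$, one cuts $\psi\circ\iota$ down to a short list of conjugacy classes. For each, the commutation relations $\rho_i\sigma_j=\sigma_j\rho_i$ with $|i-j|\geq 2$, the involution condition $\sigma_i^2=1$, and the mixed relations $\rho_i\rho_{i+1}\sigma_i=\sigma_{i+1}\rho_i\rho_{i+1}$ give a finite list of possibilities for $\psi(\sigma_1)$, which in turn determines all $\psi(\sigma_i)$. Computing the abelianizations of the resulting kernels (via GAP in low dimensions, in the spirit of \relem{notisovbn}) then shows both $(PVT_n)^{Ab}$ and $(KT_n)^{Ab}$ are distinguished from every other kernel-abelianization in the list; \rethm{mainchar} gives the characteristicity. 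Part~(a) for $n\geq 3$ follows from these distinct abelianizations, or more conceptually from the fact that $\sigma_1\in KT_n$ is a nontrivial involution while $PVT_n$ is torsion-free (cf.~\cite{NNS1},~\cite{NNS2}).

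The principal obstacle will be executing the enumeration uniformly in $n$: for small $n$ the GAP computations are routine, but for large $n$ one must either exploit a structural result on the endomorphisms of $VT_n$ analogous to that of \cite{BP} for $VB_n$, or argue inductively via the natural inclusion $VT_{n-1}\hookrightarrow VT_n$. Verifying that $(PVT_n)^{Ab}$ and $(KT_n)^{Ab}$ are distinct from \emph{all} other kernel-abelianizations on the list, rather than just from each other, is the technical bottleneck.
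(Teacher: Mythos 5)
Your proposal follows essentially the same route as the paper: part (a) via torsion (right-angled Artin vs.\ right-angled Coxeter), the explicit swap automorphism for $KT_2$, an order-three automorphism conjugating the $\sigma_i$ by $\rho_1\rho_2$ for $PVT_3$ (your $\alpha$ coincides with the paper's $\phi$ since $\rho_1\rho_2\sigma_1\rho_2\rho_1=\sigma_2$), and for the positive cases an enumeration of surjective homomorphisms onto $S_n$ up to conjugation combined with Theorem~\ref{thm:mainchar} and abelianizations of kernels. The bottleneck you flag for large $n$ is resolved in the paper exactly as you suggest, by invoking the classification of such homomorphisms for $n\geq 5$ from the literature on $VT_n$ (the analogue of \cite{BP}), after which the list has only two entries and part (a) suffices.
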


\begin{proof}
Let $n\geq 2$. 

\begin{enumerate}
	\item From \cite{NNS1} the pure virtual twin group $PVT_n$ is a right-angled Artin group (hence it is torsion free) and from \cite{NNS2} the group $KT_n$ is a right-angled Coxeter group (so it has finite order elements), hence they are not isomorphic. 

\item The proof of this item follows the same lines as the proof of \rethm{maincharvbn}, so we just indicate some steps of the proof.

\textit{Claim~1}: There are, up to conjugation, 
\begin{enumerate}
	\item 3 surjective homomorphisms from $VT_2$ to $S_2$;
	\item 5 surjective homomorphisms from $VT_3$ to $S_3$;
	\item 6 surjective homomorphisms from $VT_4$ to $S_4$;
	\item 4 surjective homomorphisms from $VT_6$ to $S_6$; and
	\item 2 surjective homomorphisms from $VT_n$ to $S_n$, for $n\geq 5$ and $n\neq 6$.
\end{enumerate}
The proof of \textit{Claim~1} for $n\geq 5$ may be found in \cite{NNS2}. For the cases $n=2,3,4$ the verification is done as in \resec{char} for the virtual braid group. 

\textit{Claim~2}: It is clear that the image of $KT_2$ by the automorphism $\psi\colon VT_2\to VT_2$ defined by $\psi(\sigma_1)=\rho_1$ and $\psi(\rho_1)=\sigma_1$ is not $KT_2$. 
Also, it is not difficult to verify that the image of $PVT_3$ by the automorphism $\phi\colon VT_3\to VT_3$ defined by $\phi(\sigma_1)=\sigma_2$, $\phi(\sigma_2)=\rho_1\rho_2\sigma_2\rho_2\rho_1$, $\phi(\rho_1)=\rho_1$ and $\phi(\rho_2)=\rho_2$ is not $PVT_3$. 
Therefore, the group $KT_2$ (resp.\ $PVT_3$) is not a characteristic subgroup of $VT_2$ (resp.\ $VT_3$).  

\textit{Claim~3}: 
The groups $PVT_2$, $PVT_4$, $KT_3$ and $KT_4$ are not isomorphic to the kernels of the other homomorphisms (for the same number of strings) from Claim 1. 
The verification of this claim can be done along the same lines as we did for $VB_n$ in \resec{char}.

Then, applying \rethm{mainchar}, we get that for $n\neq 3$ (resp. $n\geq 3$) the groups $PVT_n$ (resp. $KT_n$) are characteristic subgroups of $VT_n$. 
\end{enumerate}
\end{proof}

An application of \rethm{charvtn} is the following result.

\begin{cor}
Let $n\geq 2$. 
The virtual twin group $VT_n$ has the R$_{\infty}$-property. 
\end{cor}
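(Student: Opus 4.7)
The strategy is to mirror Section~\ref{sec:vbn} and split by the number of strands, always using \relem{ses} together with the characteristic subgroup information in \rethm{charvtn}.

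For $n=2$ we identify $VT_2 \cong \Z_2\ast \Z_2$ with the infinite dihedral group $D_\infty$. The translation subgroup $\langle \sigma_1\rho_1\rangle \cong \Z$ is the unique maximal torsion-free subgroup of $D_\infty$, hence characteristic, so every $\phi \in \aut{D_\infty}$ has the form $\sigma_1\rho_1 \mapsto (\sigma_1\rho_1)^{\epsilon}$ with $\epsilon = \pm 1$ and $\sigma_1 \mapsto \sigma_1 (\sigma_1\rho_1)^{k}$ for some $k\in\Z$. A direct twisted-conjugacy computation in the translation subgroup (when $\epsilon = 1$) and in the reflection coset (when $\epsilon = -1$) shows that one of these two families always contains infinitely many classes; this establishes R$_\infty$ for $VT_2$.

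For $n\geq 4$, \rethm{charvtn} tells us that $PVT_n$ is characteristic in $VT_n$ and the quotient is the finite group $S_n$, so by \relem{ses}(b) it suffices to show R$_\infty$ for $PVT_n$. Since $PVT_n$ is a right-angled Artin group by \cite{NNS1}, I would first try to invoke the R$_\infty$-results for RAAGs from \cite{DS} directly on the defining graph described there. If the precise defining graph does not fall into the classes treated in \cite{DS}, the backup plan is to imitate Section~4: one passes to the commutator quotient
\[
VT_n/[PVT_n, PVT_n] \;\cong\; PVT_n^{Ab}\rtimes S_n,
\]
which is a crystallographic group of the form $\Z^r\rtimes S_n$, decomposes $\Q\otimes PVT_n^{Ab}$ into irreducible $S_n$-modules via character theory, and carries out the affine-conjugation argument used in \relem{vprime} and \rethm{crystgroup} to force $R(\phi)=\infty$ on a suitable characteristic sub-crystallographic quotient.

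For $n=3$ the pure subgroup $PVT_3$ is \emph{not} characteristic, so that route is unavailable; instead I would use that $KT_3$ is characteristic in $VT_3$ (again by \rethm{charvtn}) and apply \relem{ses}(b) to $1\to KT_3 \to VT_3 \to S_3 \to 1$. Since $KT_3$ is a right-angled Coxeter group by \cite{NNS2}, one can prove R$_\infty$ for it either from the RACG literature or, in the spirit of \rethm{vb3cryst}, by taking a suitable characteristic quotient of $VT_3/[KT_3,KT_3]$ to land in one of the $2$-dimensional wallpaper groups known from \cite{GW2} to have the R$_\infty$-property, and then lifting back via \relem{ses}.

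The main obstacle is going to be the middle step: checking that the RAAG $PVT_n$ (for $n\geq 4$) and the RACG $KT_3$ actually sit inside the classes of groups for which R$_\infty$ has been established—or, if not, carrying out the crystallographic-quotient analysis explicitly for the specific holonomy representations arising from the Kneser-type graphs defining $PVT_n$ in \cite{NNS1}. The $n=2$ case is essentially bookkeeping once $\aut{D_\infty}$ has been listed, and the $n=3$ case requires some care because only $KT_3$, not $PVT_3$, is available as a characteristic subgroup.
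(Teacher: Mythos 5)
Your overall architecture is exactly the paper's: split by the number of strands, feed the characteristic subgroups from \rethm{charvtn} into \relem{ses}, and reduce to the R$_{\infty}$-property of $PVT_n$ (for $n\geq 4$), of $KT_3$ (for $n=3$), and of $\Z_2\ast\Z_2$ (for $n=2$). The $n=2$ and $n=3$ case divisions are handled correctly, and your observation that $PVT_3$ is unavailable and must be replaced by $KT_3$ is precisely the point of the paper's argument.

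The weakness is that the two central sub-results are left as conditional plans rather than proofs. For $n\geq 4$ you propose to ``try to invoke'' the RAAG results of \cite{DS} on the defining graph of $PVT_n$, with a crystallographic-quotient argument as backup; but the R$_{\infty}$-property of $PVT_n$ is already a theorem of \cite{NNS1} (the same reference you cite for the RAAG structure), so no graph-checking or character-theoretic analysis is needed. For $n=3$ you likewise defer to ``the RACG literature'' or a wallpaper-group quotient; the paper instead reads off from the presentation in \cite[Theorem~3.3]{NNS2} that $KT_3$ is the free product of six copies of $\Z_2$, whence \cite[Lemma~2]{GSW} on twisted conjugacy in free products applies immediately. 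Your hands-on $D_\infty$ computation for $n=2$ is correct (in the case $\epsilon=-1$ the reflection coset indeed carries infinitely many classes, since $a^m b$ is twisted-conjugate only to $a^{k-m}b$), but it reproves \cite[Proposition~2.8]{GW2}. In short: right skeleton, but the load-bearing steps are promissory notes that the paper discharges with two concrete identifications and citations; as written, your argument is not yet a proof until one of your alternatives for $PVT_n$ and $KT_3$ is actually carried out.
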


\begin{proof}

Since $VT_2$ is isomorphic to $\Z_2 \ast \Z_2$ then from \cite[Proposition 2.8]{GW2} (see also \cite[Lemma~2]{GSW}) it has the R$_{\infty}$-property. 
From the presentation of $KT_n$ given in \cite[Theorem~3.3]{NNS2} we get the isomorphism $KT_3\cong \Z_2\ast \Z_2\ast \Z_2\ast \Z_2\ast \Z_2\ast \Z_2$. From \cite[Lemma~2]{GSW} the group $KT_3$ has the R$_{\infty}$-property and since it is a characteristic subgroup of $VT_3$ (\rethm{charvtn}) then from \relem{ses} the latter group also  has the R$_{\infty}$-property. 
For $n\geq 4$ the pure virtual twin group $PVT_n$ has the R$_{\infty}$-property, see \cite{NNS1}. 
Then, from \relem{ses} and \rethm{charvtn}, the virtual twin group $VT_n$ has the R$_{\infty}$-property.
\end{proof}

\end{document}